\documentclass[12pt]{amsart}
\usepackage{amsfonts,amssymb,amscd,amsmath,enumerate,verbatim}
\usepackage[utf8]{inputenc}
\usepackage{latexsym}
\usepackage{mathptmx}
\usepackage{color}
\usepackage{multicol,longtable}
\input xy
\xyoption{all}
\usepackage[left]{lineno}
\usepackage{aliascnt}
\usepackage[capitalise, noabbrev]{cleveref}
\usepackage{tikz}
\definecolor{verylight}{gray}{0.97}
\definecolor{light}{gray}{0.9}
\definecolor{medium}{gray}{0.85}
\definecolor{dark}{gray}{0.6}

\newtheorem{theorem}{Theorem}[section]
\newtheorem{maintheorem}{Theorem}

\crefname{maintheorem}{Theorem}{Theorems}
\Crefname{maintheorem}{Theorem}{Theorems}
\newaliascnt{lemma}{theorem}
\newtheorem{lemma}[lemma]{Lemma}
\aliascntresetthe{lemma}
\newaliascnt{proposition}{theorem}

\aliascntresetthe{proposition}
\newaliascnt{corollary}{theorem}
\newtheorem{corollary}[corollary]{Corollary}
\aliascntresetthe{corollary}
\newaliascnt{conjecture}{theorem}

\aliascntresetthe{conjecture}
\newaliascnt{construction}{theorem}

\aliascntresetthe{construction}
\newaliascnt{claim}{theorem}

\aliascntresetthe{claim}
\newaliascnt{addendum}{theorem}

\aliascntresetthe{addendum}
\newaliascnt{question}{theorem}
\newtheorem{question}[question]{Question}
\aliascntresetthe{question}

\theoremstyle{definition}
\newaliascnt{definition}{theorem}

\aliascntresetthe{definition}
\newaliascnt{remark}{theorem}
\newtheorem{remark}[remark]{Remark}
\aliascntresetthe{remark}
\newaliascnt{example}{theorem}

\aliascntresetthe{example}

\newaliascnt{notation}{theorem}
\newtheorem{notation}[notation]{Notation}
\aliascntresetthe{notation}
\newaliascnt{discussion}{theorem}

\aliascntresetthe{discussion}
\newaliascnt{setting}{theorem}
\newtheorem{setting}[setting]{Setting}
\aliascntresetthe{setting}
\newaliascnt{ordering}{theorem}
\newtheorem{ordering}[ordering]{Ordering}
\aliascntresetthe{ordering}

\newcommand{\reg}{\mathrm{reg}}
\def\GG{{\mathcal G}}
\def\SS{{\mathcal S}}

\begin{document}

\title{Edge ideals of gapfree graphs with matching number $3$}

\author[N.~Erey]{Nursel Erey}
\author[T.~H.~H\`a]{T\`ai Huy H\`a}
\author[T.~Hibi]{Takayuki Hibi}
\author[S.~Kara]{Selvi Kara}
\author[S.~Morey]{Susan Morey}

\address{(Nursel Erey) Gebze Technical University, Department of Mathematics, 41400 Gebze, Kocaeli, Turkey}
\email{nurselerey@gtu.edu.tr}
\address{(T\`ai Huy H\`a) Mathematics Department, Tulane University, 6823 St. Charles Avenue, New Orleans, LA 70118, USA}
\email{tha@tulane.edu}
\address{(Takayuki Hibi) Department of Pure and Applied Mathematics, Graduate School of Information Science and Technology, Osaka University, Suita, Osaka 565--0871, Japan}
\email{hibi@math.sci.osaka-u.ac.jp}
\address{(Selvi Kara) Department of Mathematics, Bryn Mawr College, Bryn Mawr, PA 19010, USA}
\email{skara@brynmawr.edu}
\address{(Susan Morey) Department of Mathematics, Texas State University, 601 University Dr., San Marcos, TX 78666, USA}
\email{morey@txstate.edu}

\subjclass[2020]{05E40, 13D02.}

\keywords{edge ideal, linear quotients, matching number, gapfree graph, linear resolution, powers of monomial ideals}

\begin{abstract}
We prove that if $G$ is a gapfree graph with matching number $3$, then every power $I(G)^q$, $q\geq2$, of its edge ideal has linear quotients, and hence a linear resolution. Our approach combines graph reductions based on clique domination of gapfree graphs and matching number with operations preserving linear quotients. We reduce the problem to $26$ non-co-chordal graphs on at most seven vertices. A new preservation theorem for gapfree edge addition, together with known results on duplication and gapfree expansion, handles all but one of these graphs. For the remaining graph on seven vertices, we construct a linear quotients ordering for the square and extend it recursively to every higher power.
\end{abstract}

\maketitle

\section*{Introduction}
Minimal graded free resolutions describe the generators of an ideal, the relations among them, and the higher relations among those relations. An ideal generated in degree $d$ has a \emph{linear resolution} if its $i$th syzygies occur only in degree $d+i$. Equivalently, its Castelnuovo--Mumford regularity is $d$, the smallest possible value. Identifying ideals with linear resolutions therefore amounts to finding situations in which the degrees of all syzygies are controlled by the generating degree. For edge ideals, this problem also asks how the combinatorics of a graph controls its algebraic invariants.

Let $G$ be a finite simple graph with vertex set $V(G)=\{x_1,\ldots,x_n\}$ and at least one edge, and let $S=K[x_1,\ldots,x_n]$ be a polynomial ring over a field $K$. The \emph{edge ideal} of $G$ is
\[
I(G)=(x_ix_j:\{x_i,x_j\}\in E(G))\subseteq S.
\]
Its $q$th power $I(G)^q$  is generated in degree $2q$. It has a linear resolution exactly when $\reg~ I(G)^q=2q$. Independent results of Cutkosky, Herzog and Trung \cite{CHT} and Kodiyalam \cite{Kodiyalam} imply that $\reg~ I(G)^q=2q+b$ for all sufficiently large $q$, where $b\geq 0$ is a constant. These results establish eventual linear growth of regularity, but determining when $b=0$ and when linear resolutions begin requires further information about $G$.

The case in which the first power has a linear resolution is completely understood. Fr\"oberg's theorem \cite{F} says that $I(G)$ has a linear resolution if and only if the complement of $G$ is chordal, meaning that it has no induced cycle of length at least $4$. Such a graph $G$ is called \emph{co-chordal}. Herzog, Hibi and Zheng \cite{HHZ} showed that in this case every power has a linear resolution. In fact, every power has \emph{linear quotients}: its minimal monomial generators can be ordered $u_1,\ldots,u_m$ so that each ideal $(u_1,\ldots,u_{j-1}):u_j$, for $j\geq 2$, is generated by variables. For an ideal generated in one degree, linear quotients imply a linear resolution.

The question becomes more subtle when $I(G)$ itself does not have a linear resolution. A necessary condition for a power of an edge ideal to have a linear resolution is that $G$ be \emph{gapfree} \cite{NP}, that is, that $G$ contains no induced subgraph consisting of two disjoint edges. Equivalently, any two disjoint edges are joined by another edge. Algebraically, Bigdeli, Herzog and Zaare-Nahandi \cite{BHZ} showed that gapfreeness is equivalent to every  power of $I(G)$ being linearly presented: the first syzygies of $I(G)^q$ all have degree $2q+1$. The issue is thus whether linearity of these first syzygies eventually extends to the entire resolution.

Nevo and Peeva in \cite{NP} constructed a gapfree graph whose edge ideal has a nonlinear square, showing that gapfreeness alone does not force linear resolutions to begin at the second power. They conjectured that it does force $I(G)^q$ to have a linear resolution for all sufficiently large $q$ \cite[Open Problem~1.11(1)]{NP}. Their stronger regularity conjecture  \cite[Open Problem~1.11(2)]{NP} would imply linear resolutions for every $q\geq\reg~ I(G)-1$. In particular, this conjecture suggests that a gapfree graph with $\reg~ I(G)=3$ has linear resolutions for all powers starting with the square.

We approach this question through a combinatorial restriction. A \emph{matching} is a set of pairwise vertex disjoint edges, and the \emph{matching number} $\nu(G)$ is the maximum size of a matching in $G$. Our main result is the following.

\begin{maintheorem}
\label{t:MAINtheorem}
Let $G$ be a gapfree graph with matching number $3$. Then $I(G)^q$ has linear quotients for all $q\geq 2$.
\end{maintheorem}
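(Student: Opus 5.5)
The proof is a reduction to finitely many small graphs, in the spirit of the abstract. Two standard facts drive it. First, if $G$ is co-chordal, then every power $I(G)^q$ has linear quotients by Herzog--Hibi--Zheng \cite{HHZ}, so only non-co-chordal $G$ need attention. Second, isolated vertices do not affect $I(G)$, so I discard them. I will use the operations known to preserve linear quotients of powers $I(G)^q$, $q\ge 2$: duplication of a vertex, gapfree expansion, and a new preservation statement for adding an edge that keeps the graph gapfree. The aim is to show that every gapfree $G$ with $\nu(G)=3$ arises from a finite list of small ``core'' graphs by these operations, and then to settle the cores directly.

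\textbf{Step 1 (bounding the core).} Fix a maximum matching $M=\{a_1b_1,a_2b_2,a_3b_3\}$. Maximality gives two facts. The vertices outside $V(M)$ form an independent set. Moreover, for each edge $a_ib_i$, at most one endpoint can have neighbours outside $V(M)$ unless they share a unique common outside neighbour; otherwise we get an augmenting path. Gapfreeness then forces strong adjacency between the $M$-edges. I will use clique domination: a vertex $w$ whose neighbourhood is contained in (or equal to) that of another vertex $v$, with $v,w$ nonadjacent, can be treated as a duplicate or an expansion of $v$. Collapsing such vertices reduces $G$ to a graph on at most $6+1=7$ vertices. Here $6$ counts $V(M)$, and at most one extra outside vertex survives, since the outside vertices attached to a given $M$-vertex are twins. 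One must check that reversing each collapse is a duplication or gapfree expansion, and that matching number $3$ and gapfreeness are retained.

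\textbf{Step 2 (enumeration and edge addition).} I would list the gapfree non-co-chordal graphs on at most $7$ vertices with $\nu=3$ that are minimal under the reductions; the abstract suggests there are $26$. For each, I would prove a preservation theorem for gapfree edge addition. If $G$ is gapfree and $G+e$ is gapfree, and $I(G)^q$ has linear quotients with an order in which the new generators $e\cdot I(G)^{q-1}$ can be appended (or interleaved) with colon ideals generated by variables, then $I(G+e)^q$ has linear quotients. The proof compares colon ideals through the relevant even connections. With this theorem, most cores are obtained from co-chordal graphs, or from smaller settled cores, by duplications, expansions and gapfree edge additions. I would proceed along a poset of cores ordered by edge addition.

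\textbf{Step 3 (the exceptional graph, main obstacle).} One seven-vertex core will not be reachable. For it I would write down an explicit order on the generators of $I(G)^2$ and verify linear quotients directly; this is a finite check. I would then extend to $q\ge3$ by induction. Order $I(G)^{q}$ by writing each generator as $u\cdot e$ with $u\in I(G)^{q-1}$ in the inductive order, taking the lexicographically earliest representation, and then ordering by the edge $e$ in a fixed order. I would show that each colon ideal is generated by variables by reducing to the $q=2$ computations via even-connection arguments (Banerjee-type). Making this recursion well defined is the delicate part, because generators have several factorizations. That is where I expect the real work to lie.
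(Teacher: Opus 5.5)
\textbf{Step 1 fails.} It does not prove the bound $|V(G)|\le 7$, and that bound is the combinatorial heart of the reduction.

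Deleting $w$ is undone by a duplication only when $N(w)=N(v)$, and by an expansion only when $N[w]=N[v]$. When $N(w)\subsetneq N(v)$, duplicating $v$ gives $w$ too many neighbours. Edge deletion does not preserve linear quotients, and you give no argument that $w$ can be rebuilt by gapfree edge additions instead.

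Your counting claim is also false: outside vertices of a maximum matching attached to the same $M$-vertex need not be twins. Take a triangle $xyz$ and vertices $u_1,\dots,u_5$ with $N(u_1)=\{x,y\}$, $N(u_2)=\{x,z\}$, $N(u_3)=\{y,z\}$, $N(u_4)=\{x,y,z\}$, $N(u_5)=\{x\}$. Since $\{x,y,z\}$ is a clique and a vertex cover, the graph is gapfree with $\nu\le 3$. The set $M=\{xu_5,yu_1,zu_2\}$ is a maximum matching. The outside vertices $u_3,u_4$ are both adjacent to $y$ but have different neighbourhoods, which is exactly the strict-containment case you cannot collapse.

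The paper gets the bound differently. It deletes duplicate vertices, parallel vertices, and whiskers; re-attaching a leaf is Theorem~\ref{t:add edge all powers} with $x$ a new vertex, an operation missing from your plan. It then uses the El Zahar--Erd\H{o}s theorem: a connected gapfree graph is dominated by a clique or an induced $P_3$. A minimal dominating clique has a private neighbour at each vertex, so it has at most $\nu(G)=3$ vertices. Case analyses for $P_3$-, $K_3$-, $K_2$- and $K_1$-domination then show that a reduced graph on at least $8$ vertices is co-chordal. So the outcome is ``at most $7$ vertices or co-chordal'', not a uniform collapse to $7$ vertices.

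\textbf{Missing base and circular preservation step.} Vertex deletion need not preserve $\nu=3$: duplicating a vertex of the pentagon gives a gapfree graph with $\nu=3$, and removing the duplicate returns $C_5$. You therefore need Lemma~\ref{lem:H-structure} (a gapfree graph with $\nu\le 2$ is co-chordal or a pentagon plus isolated vertices) and an independent proof that $I(C_5)^q$ has linear quotients for $q\ge 2$. The pentagon cannot be reached from co-chordal graphs by your operations: it has no duplicate or parallel vertices, no leaves, and deleting any edge creates a gap. In the paper it is one of the starting graphs of Table~\ref{tab:graph-reductions}, alongside $G_1,G_2$ from earlier work and $G_{12}$.

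The edge-addition statement in Step 2 presupposes that the new generators can be placed with linear colon ideals, which is the whole point. The paper's theorem assumes that $I(H)^q$ has linear quotients for \emph{every} $2\le q\le t$. Its argument runs as follows:
\begin{enumerate}
\item it orders the generators of $I(H+xy)^t$ by the minimal number of copies of $xy$ in a factorization (so in your notation the new generators are more than $e\cdot I(G)^{q-1}$; for instance $e^q$ is new);
\item it orders each group using the lower power $I(H)^{t-r+1}$;
\item it uses gapfreeness to put $N_H(x)\cup N_H(y)$ into the colon ideal of every generator that requires $xy$;
\item it inducts on $t$.
\end{enumerate}

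\textbf{Step 3.} Your recursive order is essentially the paper's $M^{(s)}$, but the verification is not by even connections. It relies on a specifically chosen $M^{(2)}$, on explicit sets $D_i(w)$ of variables coming from earlier edge blocks, and on Lemma~\ref{lem:terminal_support_sets}. That lemma covers generators supported on $\{c,d,g\}$, $\{d,e,f\}$ or $\{d,g\}$, where the needed variable comes from inside the block rather than from an earlier block.
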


Previous work on the Nevo--Peeva conjecture has sought algebraic hypotheses guaranteeing linear resolutions for all powers starting with the square.  Banerjee \cite{Banerjee} developed an inductive method that bounds the regularity of successive powers using colon ideals $(I(G)^{q+1}:m)$, where $m$ is a minimal generator of $I(G)^q$. Building on this method, Banerjee, Kara and H\`a \cite{BKH} proved that every power starting with the square has a linear resolution when $G$ is gapfree and \emph{locally linear}, meaning that $\reg~(I(G):x)\leq 2$ for every vertex $x$. Local linearity implies $\reg~ I(G)\leq 3$. Results from \cite{Bigdeli,BHZ} require the first nonlinear syzygies of the original edge ideal $I(G)$ to occur at the last or penultimate step of its minimal free resolution; for gapfree graphs, this also gives linear resolutions for every power starting with the square. These conditions control colon ideals or the pattern of syzygies, but it can be difficult to recognize which classes of graphs satisfy them directly from their combinatorics. This motivates studying conditions that can be read from the graph itself.

Combinatorial restrictions have also proved effective. Banerjee established linear resolutions for all powers $q\geq 2$ of gapfree, cricket-free graphs \cite{Banerjee}. Erey obtained the same conclusion for gapfree, diamond-free graphs \cite{Erey2018} and for gapfree, $C_4$-free graphs \cite{Erey2019}. Here each additional condition excludes the indicated graph as an induced subgraph. These results give substantial families for which graph structure controls every higher power.

In \cite{MV}, Minh and Vu determined exactly how gapfreeness and regularity control the square and cube of edge ideals:
\[
\begin{aligned}
I(G)^2\text{ has a linear resolution}
&\quad\Longleftrightarrow\quad
G\text{ is gapfree and }\reg~ I(G)\leq 3,\\
I(G)^3\text{ has a linear resolution}
&\quad\Longleftrightarrow\quad
G\text{ is gapfree and }\reg~ I(G)\leq 4.
\end{aligned}
\]
These characterizations motivate the use of matching number in Theorem \ref{t:MAINtheorem}, since it bounds regularity and provides structure for graph reductions.

A matching is \emph{induced} if the induced graph on the vertices of the matching contains no additional edges. Write $\nu_{\mathrm{ind}}(G)$ for the maximum size of an induced matching. The two invariants, matching number and induced matching number, provide bounds for the regularity: $ \nu_{\mathrm{ind}}(G)+1\leq\reg~ I(G)\leq\nu(G)+1. $ The lower bound is due to Katzman \cite{Katzman}, and the upper bound is due to H\`a and Van Tuyl \cite{HaVanTuyl}. In   \cite{TNT}, Trung characterized equality in the upper bound: for a connected graph $G$, one has $\reg~ I(G)=\nu(G)+1$ if and only if $G$ is a pentagon or $\nu_{\mathrm{ind}}(G)=\nu(G)$. This characterization allows us to sharpen the bound using the structure of the graph.

For gapfree graphs, one has $\nu_{\mathrm{ind}}(G)=1$ and the small matching cases are already understood.  When $\nu(G)=1$, the bounds force $\reg~ I(G)=2$. Thus the complement of $G$ is chordal, and every power of its edge ideal has linear quotients. When $\nu(G)=2$, the upper bound gives $\reg~ I(G)\leq3$. So Minh and Vu's results already provide linear resolutions for the square and cube. In fact, all powers starting with the square have linear quotients: by Lemma \ref{lem:H-structure}, either the complement of $G$ is chordal or its only component containing an edge is a pentagon. The pentagon case was established in our earlier work \cite[Proposition~4.2]{EFHHKM}.

Matching number three is therefore the next case. The upper bound gives $\reg~ I(G)\leq4$, but neither equality case in Trung's characterization is possible: the pentagon has matching number two, while $\nu_{\mathrm{ind}}(G)=1<3=\nu(G)$. Hence $\reg~ I(G)\leq3$, and Minh and Vu's results give linear resolutions for the square and cube. Theorem \ref{t:MAINtheorem} extends this conclusion to every higher power and gives linear quotients for all powers starting with the square.


Our approach in this paper continues the study in \cite{EFHHKM}, where we investigated linear quotients as a way to address the Nevo--Peeva conjecture. In that work, we proposed that once a power of an edge ideal has linear quotients, all subsequent powers should have them, and studied graph operations preserving this property. In particular, duplication and gapfree expansion preserve linear quotients at each fixed power. These operations produced families beyond the preceding forbidden subgraph classes, including graphs containing induced crickets, diamonds, $C_4$ and $C_5$. In this paper,  the preservation results serve as reduction tools: we remove vertices that can later be restored by these operations, and use the matching restriction to control the graphs that remain.

\begin{figure}[ht]
\centering
\begin{tikzpicture}[scale=1.8]
\coordinate (z) at (0,1){};
\coordinate (p) at (-1,0.3){};
\coordinate (q) at (1,0.3){};
\coordinate (x) at (0,0.2){};
\coordinate (a) at (-0.7,-0.7){};
\coordinate (b) at (0.7,-0.7){};
\coordinate (w) at (0,-0.3){};
\fill(z)circle(0.7mm);
\fill(p)circle(0.7mm);
\fill(q)circle(0.7mm);
\fill(x)circle(0.7mm);
\fill(a)circle(0.7mm);
\fill(b)circle(0.7mm);
\fill(w)circle(0.7mm);
\draw(z)--(p)--(a)--(b)--(q)--cycle;
\draw(z)--(x)--(a);
\draw(x)--(b);
\draw(q)--(w)--(b);
\draw(p)--(w)--(a);
\draw(0,1.2)node{{\Large $a$}};
\draw(-1.2,0.3)node{{\Large $b$}};
\draw(1.2,0.3)node{{\Large $e$}};
\draw(0.2,0.2)node{{\Large $y$}};
\draw(-0.7,-0.9)node{{\Large $c$}};
\draw(0.7,-0.9)node{{\Large $d$}};
\draw(0.2,-0.3)node{{\Large $x$}};
\end{tikzpicture}
\caption{The remaining graph $G_{12}$.}\label{fig:exceptional}
\end{figure}

Section \ref{sec:edge-addition} establishes a new preservation result for adding an edge, including attaching a leaf, when the resulting graph remains gapfree (Lemma \ref{Nursel_Susan}). In Section \ref{sec:domination}, these reductions allow us to focus on  non-co-chordal gapfree graphs with matching number $3$ and without duplicate vertices, parallel vertices or whiskers. Combining the matching restriction with the domination theorem of El Zahar and Erd\H{o}s \cite[Theorem~5]{EZE} (see also \cite{BT1990}), we show that such graphs have at most seven vertices. The resulting finite list consists of $26$ graphs (Lemma \ref{lem.26}). Appendix \ref{app:code} gives the Python code used to create the list, and Appendix \ref{app:graphs} displays the graphs. Section \ref{sec:finite-reduction} uses the preservation results and known cases to reduce this list to the graph in Figure \ref{fig:exceptional} (Lemma \ref{pending}). Finally, in Section \ref{sec:main-proof}, we construct a linear quotients ordering for every power $I(G)^q$, $q\geq 2$, of the graph in Figure \ref{fig:exceptional} (Lemma \ref{prop:fig1_all_powers_lq}), completing the proof of Theorem \ref{t:MAINtheorem}.


\section{Adding a new edge to a gapfree graph}\label{sec:edge-addition}

We write $N_G(x)$ for the open neighborhood of $x$ and $N_G[x]=N_G(x)\cup\{x\}$ for its closed neighborhood. Distinct vertices $x,y$ are \emph{duplicates} if $N_G(x)=N_G(y)$ and \emph{parallel} if $N_G[x]=N_G[y]$. A \emph{leaf} is a vertex of degree one; we use ``a whisker attached to $x$'' to refer to a leaf whose unique neighbor is $x$.

The \emph{duplication} of $G$ at a vertex $x$ adds a new vertex $y$ with the same open neighborhood as $x$. Denote the resulting graph by $G^x$. The \emph{expansion} of $G$ at $x$, denoted by $G^{[x]}$, also adds the edge $\{x,y\}$; thus $x$ and $y$ have the same closed neighborhoods. We also refer to this second operation as \emph{parallelization}. Explicitly,
\[
\begin{aligned}
E(G^x)&=E(G)\cup\{\{y,z\}:\{x,z\}\in E(G)\},\\
E(G^{[x]})&=E(G^x)\cup\{\{x,y\}\}.
\end{aligned}
\]
We recall the following preservation results from our earlier work. Here we assume $G$ is a graph and $q$ is a positive integer.

\begin{lemma}[{\cite[Proposition~2.4]{EFHHKM}}]
\label{duplication}
If $I(G)^q$ has linear quotients, then $I(G^x)^q$ has linear quotients.
\end{lemma}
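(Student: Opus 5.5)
The plan is to build an explicit linear quotients order for $I(G^x)^q$ out of one for $I(G)^q$. I will treat $I(G^x)$ as $I(G)+y\,(N_G(x))$, with $y$ the new vertex and $N_G(y)=N_G(x)$. Every minimal generator of $I(G^x)^q$ can be written uniquely as $y^k m$, where $0\le k\le q$ and $m$ is a minimal generator of $I(G)^{q-k}\,(N_G(x))^k$. Substituting $y\mapsto x$ sends $y^k m$ to $x^k m$, which lies in $G(I(G)^q)$ because every $y z$ becomes the edge $xz$. This substitution groups the generators of $I(G^x)^q$ into fibers over the generators of $I(G)^q$.

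Fix a linear quotients order $u_1,\dots,u_r$ of $I(G)^q$. I would order the generators of $I(G^x)^q$ by three keys. First, by the position of the image $\phi(w)=u_i$ in the given order. Second, within a fiber, by increasing $y$-degree $k$. Third, ties are broken arbitrarily; this is possible because a fixed $u_i$ and $k$ determine $w$ except for the choice of which $x$-neighbour factors are paired with $y$. To make each fiber manageable, I would first prove a polarization-style lemma: $I(G^x)^q$ is obtained from $I(G)^q$ by the substitution $x^a\mapsto \sum_{k} x^{a-k}y^k$ in the $x$-part. I expect this to go through because $x$ and $y$ are never adjacent and play symmetric roles.

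The colon computation then splits into two cases. Suppose $w'$ precedes $w$ and $\phi(w')$ precedes $\phi(w)$. By the hypothesis there is a variable $z\in(u_1,\dots,u_{i-1}):u_i$ dividing $\phi(w')/\gcd$. I then lift $z$, or $y$ if $z=x$ and $w$ already uses $y$ maximally, to an element of the colon for $w$, rebalancing $x$ and $y$ by the symmetry so that the lifted generator appears earlier. Now suppose $w'$ lies in the same fiber as $w$ with smaller $k$. Then $w$ has fewer $x$'s than $w'$, and the monomial $w\cdot x/y$ is itself a generator in the fiber with smaller $k$, so $x$ lies in the colon and $x$ divides $w'/\gcd(w,w')$. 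The main obstacle is the first case when $z=x$: we need the correct replacement variable ($x$ or $y$) and must check that the generator produced lies earlier in the order. I would handle it by choosing the second key so that the fiber is ordered with $x$ preferred to $y$.
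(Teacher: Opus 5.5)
Your setup is sound. Since $x$ and $y$ are non-adjacent, each $x$ in a product of edges comes from a distinct edge $xz$. Hence the generators of $I(G^x)^q$ are exactly the monomials $x^{a-k}y^ku'$ with $x^au'\in\mathcal G(I(G)^q)$, $x\nmid u'$ and $0\le k\le a$. In particular, $u_i$ and $k$ determine $w$ uniquely, so there are no ties to break.

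Ordering fiber by fiber, and by increasing $y$-degree inside each fiber, is the right order. The same-fiber case is correct. The cross-fiber case with witness $z\neq x$ also works. If $u_h=zu_i/t$ with $h<i$, use $zw/t$, or $zw/y$ when $t=x$ and $x\nmid w$. Then $z\mid w':w$ because $z\notin\{x,y\}$. The paper does not reprove this lemma; it imports it from \cite[Proposition~2.4]{EFHHKM}, so your argument has to stand on its own.

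The gap is the case $z=x$, which you flag as the main obstacle but resolve incorrectly. Your rule chooses between $x$ and $y$ by looking at $w$. But which of them divides $w':w$ depends on how $w'$ splits its $x$-degree between $x$ and $y$. Changing the order inside fibers cannot repair this, because the needed witnesses lie in an earlier fiber.

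Here is a concrete failure. Take the triangle on $x,z_1,z_2$ with $q=2$ and the linear quotients order $x^2z_1^2<x^2z_1z_2<x^2z_2^2<xz_1^2z_2<xz_1z_2^2<z_1^2z_2^2$. Let $w=xz_1^2z_2$ and $w'=y^2z_1^2$, which lies in the fiber of $x^2z_1^2$. Then $w':w=y^2$. Downstairs $u_j:u_i=x$, so the only available witness is $z=x$. Since $w$ does not use $y$ maximally, your rule lifts $x$, which does not divide $y^2$.

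The missing observation is that both variables are always available. If $u_h=xu_i/t$ with $h<i$ (so $t\neq x$), then $xw/t$ and $yw/t$ both lie in the fiber of $u_h$. Indeed, their $y$-degrees $k$ and $k+1$ are at most $a+1=\deg_x u_h$. So both precede $w$, and they give colons $x$ and $y$ respectively. Finally, $\deg_x w'+\deg_y w'=\deg_x u_j>\deg_x u_i=\deg_x w+\deg_y w$. Hence at least one of $x,y$ divides $w':w$. This completes the proof, with no condition on the second key beyond increasing $y$-degree.
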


\begin{lemma}[{\cite[Theorem~3.4]{EFHHKM}}]
\label{expansion}
If $I(G)^q$ has linear quotients and if $G^{[x]}$ is gapfree, then $I(G^{[x]})^q$ has linear quotients.
\end{lemma}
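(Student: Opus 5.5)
The plan is to build an explicit linear quotients order for $I(G^{[x]})^q$ from the given order for $I(G)^q$. Let $y$ be the new vertex, so $N_{G^{[x]}}[y]=N_{G^{[x]}}[x]$. The ring map $\pi:y\mapsto x$ sends every generator of $I(G^{[x]})^q$ to a generator of $I(G)^q$, except those divisible by $xy$, which go to a monomial divisible by $x^2$ and so not in the edge ideal's usual generating set. Accordingly, group the minimal generators of $I(G^{[x]})^q$ by the number $k$ of factors $xy$ they contain. Write each generator as $u=(xy)^k w$, where $w$ is a product of $q-k$ edges of $G^{[x]}$ not equal to $xy$.

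The ordering will be lexicographic in three keys, in this priority:
\begin{enumerate}[(i)]
\item $k$, in increasing order;
\item the $\pi$-image of $w$, using the given order on generators of $I(G)^q$ (for $k\ge 1$, we use the given order on $I(G)^{q-k}$-type data embedded via multiplication by $x^{2k}$);
\item the $y$-degree, in increasing order, i.e.\ $x$ before $y$.
\end{enumerate}
Key (iii) comes from the duplication argument of Lemma~\ref{duplication}, which we mimic. For $k=0$, the generators are exactly those of $I((G)^x)^q$ with $y$ a duplicate of $x$, and Lemma~\ref{duplication} (with its proof's ordering) gives linear quotients on this initial segment.

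Next, for a generator $u=(xy)^k w$ with $k\ge1$, we must show that the colon $(\text{earlier}):u$ is generated by variables. For each earlier generator $v$, it suffices to find an earlier $v'$ with $v':u$ equal to a single variable dividing $v:u$.

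The key exchange step uses the gapfree hypothesis on $G^{[x]}$. Suppose $v$ has strictly fewer $xy$ factors, or the same number but an earlier $\pi$-image. Take an edge $ab$ occurring in $v$ but not in $u$. If $xy$ can be traded, replace $xy$ in $u$ by $xa$ or $ya$; this requires $a\in N(x)$. When $a,b\notin N[x]$, the edges $xy$ and $ab$ are disjoint, so gapfreeness forces an edge between $\{x,y\}$ and $\{a,b\}$. Since $N[x]=N[y]$, this means $a$ or $b$ lies in $N(x)$. Say $a\in N(x)$. Then $v'=u\cdot a/y$ (replacing $xy$ by $xa$) has fewer $xy$ factors, so it is earlier, and $v':u=(a)$ with $a\mid v:u$. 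Within a fixed $k$, the linear quotient property of $I(G)^{q-k}$, transported through $\pi$ and multiplied by $(xy)^k$, supplies the needed variables. The $x$/$y$ swaps are handled as in duplication.

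I expect the main obstacle to be the comparison of generators with the same $k$ but whose $w$-parts differ only in how $x$ and $y$ are distributed. Here $\pi$ collapses distinct generators, and the colon can pick up the variable $x$ or $y$ itself. I would first try to resolve this by proving that $y$ (resp.\ $x$) in such a colon is witnessed by the swap $u\cdot x/y$, which is earlier by key (iii). If that fails, I would refine key (iii) to a reverse-lex order on $(\deg_x,\deg_y)$ and verify the cases by hand.
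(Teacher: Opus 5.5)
Your layer-$0$ step via Lemma~\ref{duplication} is fine. The treatment of the generators whose minimal number of $xy$ factors is $k\ge 1$ (call this layer $k$) does not work, for three reasons.

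\textbf{Key (ii) is undefined there.} Since $G^{[x]}$ is gapfree and $N_{G^{[x]}}(y)\setminus\{x\}=N_G(x)=:N$, every edge of $G$ has an endpoint in $N$. Hence every monomial of $I(G)^q$ has degree at least $q$ in the variables of $N$. On the other hand, if $u$ is in layer $k\ge1$, no other factor of a minimal factorization can have both endpoints $a,a'\in N$, since otherwise $(xy)(aa')=(xa)(ya')$ lowers $k$. So $\deg_N u=q-k$ exactly. Thus $\pi(u)\notin I(G)^q$, and the given order on $\mathcal G(I(G)^q)$ says nothing about these generators.

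\textbf{Your fallback is not a hypothesis.} A linear quotients order on $I(G)^{q-k}$ is not assumed; only the single power $q$ is. It is also false in the cases that matter. For $q=2$, layer $k=1$ would need $I(G)$ itself to have linear quotients, i.e.\ $G$ co-chordal. Yet the paper applies this lemma to the non-co-chordal $G_1$, expanded at $x$, to obtain $G_{13}$.

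\textbf{The exchange step has a gap.} The conclusion $a\mid(v:u)$ does not follow from $a$ being the $N$-endpoint of an edge of $v$ missing from $u$, since $u$ may contain $a$ to the same power.

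The missing idea is that above layer $0$ no hypothesis on $I(G)$ is needed: gapfreeness alone controls everything. By the count above, the layer of $u$ is $q-\deg_N u$, and layer-$0$ generators have $\deg_N\ge q$.
\begin{enumerate}
\item[(a)] For $u$ in layer $k\ge1$ and $a\in N$, replacing one $xy$ by $xa$ gives $ua/y$, which lies in layer $k-1$. So every variable of $N$ lies in the colon ideal of $u$.
\item[(b)] If $v<u$ lies in a lower layer, or in the same layer with a different $N$-part $\prod_{a\in N}a^{\deg_a v}$, then $\deg_a v>\deg_a u$ for some $a\in N$. Hence $a\mid(v:u)$, and (a) applies.
\item[(c)] The layer-$k$ generators with a fixed $N$-part $a_1\cdots a_{q-k}$ are exactly $(xy)^k a_1\cdots a_{q-k}\,p$ with $p\in\mathcal G\bigl(\prod_{i=1}^{q-k}(x,y,N_G(a_i)\setminus N_G[x])\bigr)$. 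This is a product of ideals generated by variables, hence polymatroidal, hence it has linear quotients.
\end{enumerate}
Ordering layers increasingly, the blocks within a layer arbitrarily, and each block by the polymatroidal order completes the proof. The hypothesis on $I(G)^q$ enters only through Lemma~\ref{duplication} on layer $0$.

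The block in (c) is the structure your key (iii) was reaching for. The $x$/$y$ redistribution you flagged is only part of it; the other part is how the $a_i$ are paired with their neighbours outside $N_G[x]$.
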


Our new edge addition result allows an edge to be added between existing vertices or a leaf to be attached to a vertex, provided that gapfreeness is preserved.

\begin{corollary}\label{Nursel_Susan}
Let $H$ be a gapfree graph and $y\in V(H)$. Suppose that either $x$ is a new vertex or that $x\in V(H)$ with $\{x,y\}\notin E(H)$. Let $G$ be the graph with $V(G)=V(H)\cup\{x\}$ and $E(G)=E(H)\cup\{\{x,y\}\}$. If $G$ is gapfree and $I(H)^q$ has linear quotients for all $q\geq 2$, then $I(G)^q$ has linear quotients for all $q\geq 2$.
\end{corollary}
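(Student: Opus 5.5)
Since the statement is labelled a corollary, I expect it to follow from a fixed-power preservation theorem. The plan is to first prove that theorem, stated for a single power $q\ge 2$: if $G$ is gapfree and $I(H)^q$ has linear quotients, then $I(G)^q$ has linear quotients. The corollary then follows by applying it to each $q\geq 2$. Write $e=xy$ and $I=I(H)$, so that
\[
I(G)^q=I^q+e\,I^{q-1}+\dots+e^q .
\]
I would order the minimal generators of $I(G)^q$ by first listing those of $I^q$ in a given linear quotients order. Then I would list the generators divisible by $e$ exactly once, then exactly twice, and so on, grouped by the $e$-adic layer $e^k I^{q-k}$. Inside each layer, the generators $e^k v$ with $v$ a minimal generator of $I^{q-k}$ not divisible by $e$ would be ordered following a linear quotients order of $I^{q-k}$. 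That order exists by hypothesis when $q-k\ge 2$. For $q-k=1$ one needs an order on $I(H)$ itself, which need not have linear quotients; this is the first point to handle.

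The key steps, in order, are as follows. (1) Show that within a layer $e^k I^{q-k}$, the colon of earlier same-layer generators is generated by variables, inherited from $I^{q-k}$ via $(e^k A):e^k v = A:v$. (2) Show that the colon $I^q : e^k v$, and more generally the colon of earlier layers, is generated by variables modulo the already-present ones. For a monomial $w\in I^q$, gapfreeness of $G$ gives the needed variables. Take an edge $ab$ of $H$ disjoint from $e$, appearing in $w$ but "missing" in $e^kv$. Gapfreeness of $G$ forces an edge between $\{a,b\}$ and $\{x,y\}$, say $xa$. Then the monomial $(e^kv)\cdot a/x$, or its analogue, lies in $I^q$ or in an earlier layer, and so contributes the variable $a$. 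The standard exchange argument (as in the Banerjee-type colon calculus) should show that every generator of the colon is divisible by such a variable. When $x$ is a new vertex, its only neighbor in $G$ is $y$. Gapfreeness of $G$ then forces $y$ to be adjacent to an endpoint of every edge of $H$. This makes the exchange $x\mapsto$ (neighbor of $y$) always available.

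The main obstacle is the bottom layers, especially $e^{q-1}I(H)$ and $e^q$. Here there is no linear quotients hypothesis on $I(H)$, and the edges of $H$ may be far from $e$. For these layers I would instead exploit that $e^{q-1}uv$-type colons with $q\ge 2$ already contain a copy of $e$. The key fact is that in a gapfree graph every edge $uv$ is adjacent to $e$ or meets it. Hence $e\cdot uv$ can be rewritten as $(xu)(yv)$-type products, and these are generators already listed in higher layers. This uses the $q\ge2$ hypothesis essentially. I would order the layer $e^{q-1}I(H)$ by distance of the edge from $\{x,y\}$: first edges containing a neighbor of $x$ or $y$, then others. I would then check by cases on whether $x$ or $y$ is adjacent to the new edge's endpoints that each colon is generated by variables. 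This case analysis, ensuring no degree-two generator survives in the colon, is the step I expect to require the most care.
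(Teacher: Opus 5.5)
Your architecture is the paper's. Your $e$-adic layers are the $S$-sets of \cref{ordering}, with one caveat: the layer index must be the \emph{minimal} number of copies of $xy$ in a factorization into edges, not monomial divisibility by $e$. When $x\in V(H)$, the monomial $(xa)(yb)$ is divisible by $xy$ yet may lie in $I(H)^2$. The within-layer order is inherited from $I(H)^{q-k}$, as in the paper. Your gapfreeness exchange is Lemma~\ref{l:neighbor sets}, which puts $N_H(x)\cup N_H(y)$ into the colon of every generator outside $I(H)^q$.

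The gap is the sentence ``the standard exchange argument should show that every generator of the colon is divisible by such a variable.'' It is false as stated, and the argument it replaces is the actual proof. Take $H$ with edges $yc,ye,ce,cd,ef$, let $x$ be a new vertex, and take $q=2$. Then $H$ is co-chordal and $G=H+xy$ is gapfree. For $u_j=(ce)(xy)$ and $u_i=(cd)(ef)$ one gets $u_i:u_j=df$, while $N_H(x)\cup N_H(y)=\{c,e\}$. The colon ideal is still linear, but $d$ and $f$ come from the re-pairings $(cd)(ey)$ and $(cy)(ef)$. These trade an $H$-edge of $u_j$ together with $xy$ for two other edges; no substitution $xy\mapsto ay$ produces them.

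Organizing such re-pairings at all powers is the Claim inside the proof of \cref{t:add edge all powers}. It says: if $u_i\in S_r$, $u_j\in S_p$ with $r<p$, and $u_i:u_j$ is nonlinear and avoids $N_H(x)\cup N_H(y)$, then some $u_k$ in a \emph{strictly earlier} $S$-set has linear colon dividing $u_i:u_j$. It is proved by three ingredients:
\begin{enumerate}
\item induction on the power, cancelling a copy of $xy$ when $r\ge2$, or an $H$-edge shared by $u_i$ and $u_j$;
\item a secondary induction on $\deg(u_i:u_j)$;
\item a case analysis on $u_j:u_i$, with exponent bookkeeping guaranteeing that each swapped generator has colon dividing $u_i:u_j$.
\end{enumerate}
The witness must land in a strictly earlier layer because the within-layer orders at consecutive powers come from unrelated orders of $I(H)^m$ and $I(H)^{m-1}$. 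None of this, nor any induction on $q$, appears in your plan.

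You also place the difficulty in the wrong spot. The internal order of $e^{q-1}I(H)$ is immaterial. A nonlinear colon between two of its generators is a whole edge of $H$, and by gapfreeness of $G$ every edge of $H$ has an endpoint in $N_H(x)\cup N_H(y)$, so Lemma~\ref{l:neighbor sets} applies. The same holds for every comparison with $e^q$. Your proposed order there is vacuous anyway, for the same reason: every edge of $H$ contains such a vertex. The hard comparisons are between $I(H)^q$ and low layers such as $e\,I(H)^{q-1}$. There $u_j$ carries many $H$-edges that can absorb all neighbor variables of $u_i$, as in the example above.

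Finally, your layered order uses linear quotients orders of $I(H)^m$ for every $2\le m\le q$. So it cannot give the single-power theorem you announce. It gives the paper's version, which assumes linear quotients for all powers $2\le m\le q$; that is enough for the corollary, but you should state the theorem that way.
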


We prove Corollary  \ref{Nursel_Susan} by establishing Theorem \ref{t:add edge all powers}. We first introduce notation and an ordering of the generators of $I(G)$. For a monomial ideal $J$, let $\GG(J)$ denote its set of minimal monomial generators. For monomials $u,v$, we write $u:v=u/\gcd(u,v)$, also denoted by $(u:v)$.

Let $I(G)$ be the edge ideal of a finite graph $G$ and $e \in E(G)$.  We write $e=xy$ for both the edge $\{x,y\}$ and the monomial generator $xy$ of $I(G)$. We use the notations $\shortmid$ and $\mid$ respectively for \textit{edge division} and \textit{monomial division}. If $u$ is a monomial in $I(G)^t, t \geq 1$, we write $e \shortmid u$ if $u = e \cdot e_2 \cdots \cdot e_t$ with each $e_i \in E(G)$.  In other words, $u$ can be written as a product of $t$ edges, one of which is $e$.  For example, if $G$ is the square on $\{x_1,x_2,x_3,x_4\}$ with the edges $e_1=x_1x_2, e_2=x_2x_3, e_3=x_3x_4, e_4=x_4x_1$ and if $u=e_2e_4=x_1x_2x_3x_4$, then $e_1 \shortmid u$ since $u=e_1e_3$ as well, while $x_1x_3 \mid u$ but $x_1x_3 \nshortmid u$.

\begin{setting}\label{setting}
Let $H$ be a gapfree graph and let $y \in V(H)$. Assume that either $x$ is a new vertex or that $x \in V(H)$ with $\{x,y\} \not\in E(H)$. Let $G$ be the graph with $V(G)=V(H) \cup \{x\}$ and $E(G) = E(H) \cup \{\{x,y\}\}$, which is obtained from $H$ by adding a new edge $\{x,y\}$.
\end{setting}

The ordering below groups generators according to the minimum number of copies of the new edge in a factorization. The preliminary lemmas describe these groups and the colon ideals needed for linear quotients. We first handle the square and then pass to higher powers by induction.

\begin{ordering}\label{ordering}
Assume $H$, $x$, $y$, and $G$ are as in \cref{setting}. Let $t \ge 2$ and assume that $I(H)^q$ has a linear quotients ordering for $2 \le q \le t$. In order to define an ordering on the generators of $I(G)^t$, we define the $t+1$ disjoint sets
$$ S_1= \GG(I(H)^t), \quad S_2 =\GG(I(H)^{t-1}(xy))\setminus S_1,\quad S_3=\GG(I(H)^{t-2}(xy)^2)\setminus \bigcup_{i=1}^2 S_i, \ldots, $$
$$ S_{t} =\GG(I(H)(xy)^{t-1}) \setminus \bigcup_{i=1}^{t-1}S_i, \quad S_{t+1}=\{(xy)^t\}.$$
We  say $S_1,\dots, S_{t+1}$ are the \textit{$S$-sets of $I(G)^t$}. Set $s_0=0, s_1=|S_1|$ and $s_i=\sum_{j=1}^i |S_j|$.

Order the generators
$$u_1<\dots <u_{s_1}<u_{s_1+1}<\dots <u_{s_2}< u_{s_2+1}< \ldots < u_{s_3}< \dots < (xy)^t$$
of $I(G)^t$ so that when $i\le t-1$ we have $u_{s_{i-1}+1},\dots ,u_{s_{i}}$ is a linear quotients ordering for $S_{i}$ formed by starting with a linear quotients ordering of $I(H)^{t-i+1}$, multiplying each element by $(xy)^{i-1}$, and removing any elements already ordered in $S_j$ for $j <i$.

The elements of $S_t$ are of the form $e(xy)^{t-1}$ where $e\in I(H)$. Order these elements so all elements with $e=ab$ where $a,b \ne x,y$ come before all elements with $e=ay$ or $e=ax$ (note that $e=xy$ is not possible since $xy \not\in E(H)$).

Notice that under this order, all elements of $S_i$ come before all elements of $S_j$ for $1 \le i < j \le t+1$.

Moreover, given a generator $u$ of $\GG(I(G)^t)$, we have $u\in S_i$ where $i-1$ is the minimum number of copies of $xy$ used in any representation of $u$ as a product of $t$ edges of $G$. We  call such a representation of $u$ an {\it optimal factorization} of $u$.
\end{ordering}

The next lemma records how multiplication and division by an edge affect the $S$-sets in \cref{ordering}. When $x\in V(H)$, multiplying by an edge can decrease the number of copies of $xy$ needed in a factorization.

\begin{lemma}\label{l:reduce xy}
Assume $H$, $x$, $y$, and $G$ are as in \cref{setting}. Let $t \ge 3$ and assume that $I(H)^q$ has a linear quotients ordering for $2 \le q \le t$. Let $\SS_1,\dots, \SS_{t}$ be the $S$-sets of $I(G)^{t-1}$. Let $S_1,\dots, S_{t+1}$ be the $S$-sets of $I(G)^{t}$. For any generator $u$ of $I(G)^{t}$, the following statements hold.
\begin{enumerate}
    \item If $u\in S_r$ for some $2\le r\le t+1$, then $xy\shortmid u$ and $\frac{u}{xy}\in \SS_{r-1}$. Conversely, if $v\in \SS_{r-1}$, then $(xy)\cdot v \in S_s$ for some $s\le r$.
    \item Let $1\leq r\leq t$.
    \begin{enumerate}
        \item If $u=e_1\dots e_{t-r+1}(xy)^{r-1}\in S_r$ with $e_1,\dots ,e_{t-r+1}\in E(H)$, then $\frac{u}{e_1}\in \SS_r$.
        \item If $e\neq xy$ and $v \in \SS_{r}$, then $ev\in S_{r'}$ for some $r'\leq r$.
    \end{enumerate}
\end{enumerate}
\end{lemma}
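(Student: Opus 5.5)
The whole argument will rest on one reformulation of the $S$-sets. For a generator $w$ of $I(G)^m$, write $c(w)$ for the minimum number of copies of $xy$ in any factorization of $w$ as a product of $m$ edges of $G$. By \cref{ordering}, $w$ lies in the $S$-set of index $c(w)+1$. Every claim in the lemma then turns into an inequality about $c$, and each inequality comes from taking an optimal factorization and multiplying or dividing by a single edge.

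For (1), let $u\in S_r$ with $r\ge 2$, so $c(u)=r-1\ge 1$. Every factorization of $u$ then contains $xy$, so $xy\shortmid u$. Fix an optimal factorization $u=(xy)f_2\cdots f_t$. Then $u/(xy)=f_2\cdots f_t$ has a factorization with $r-2$ copies of $xy$, so $c(u/(xy))\le r-2$.

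For the reverse inequality, suppose $u/(xy)=g_1\cdots g_{t-1}$ with fewer than $r-2$ copies of $xy$. Then $u=(xy)g_1\cdots g_{t-1}$ uses fewer than $r-1$ copies, contradicting $c(u)=r-1$. Hence $c(u/(xy))=r-2$, which means $u/(xy)\in\SS_{r-1}$.

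Conversely, let $v\in\SS_{r-1}$ with optimal factorization using $r-2$ copies of $xy$. Multiplying by $xy$ gives a factorization of $(xy)v$ with $r-1$ copies, so $c((xy)v)\le r-1$. Thus $(xy)v\in S_s$ with $s\le r$. The inequality may be strict when $x\in V(H)$, as the remark before the lemma warns.

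For (2)(a), the displayed factorization of $u$ has exactly $r-1$ copies of $xy$, since the $e_i$ lie in $E(H)$ and $xy\notin E(H)$. By hypothesis $u\in S_r$, so this factorization is optimal. Then $u/e_1=e_2\cdots e_{t-r+1}(xy)^{r-1}$ has a factorization with $r-1$ copies, so $c(u/e_1)\le r-1$.

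Now suppose $u/e_1=g_1\cdots g_{t-1}$ with at most $r-2$ copies of $xy$. Then $u=e_1g_1\cdots g_{t-1}$ has at most $r-2$ copies, because $e_1\ne xy$. This contradicts $u\in S_r$. Hence $c(u/e_1)=r-1$, that is, $u/e_1\in\SS_r$.

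For (2)(b), take an optimal factorization of $v\in\SS_r$ with $r-1$ copies of $xy$. Multiplying by $e\ne xy$ gives a factorization of $ev$ with $r-1$ copies, so $c(ev)\le r-1$ and $ev\in S_{r'}$ with $r'\le r$.

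Some bookkeeping still has to be checked. Products and quotients of generators of equal-degree powers of an edge ideal are again minimal generators, since every generator has degree $2m$. The index ranges must also match, for instance $r-1\le t$ in (1) and $r\le t$ in (2). The only delicate point is that the proof uses the intrinsic description of $S_i$ via $c$, not the iterative set differences in \cref{ordering}. So I first need to confirm that the two descriptions agree. This holds because $\GG(I(H)^{t-i+1}(xy)^{i-1})$ is exactly the set of generators admitting a factorization with at most $i-1$ copies of $xy$. With that in place, the lemma is routine and I expect no serious obstacle. The hypothesis $t\ge3$ and the linear quotients assumptions play no role beyond making the $S$-sets well defined.
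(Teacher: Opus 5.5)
Your proof is correct, and it is exactly the reasoning the paper leaves implicit: the paper gives no separate proof of this lemma and relies on the characterization at the end of \cref{ordering}, namely that $u\in S_i$ precisely when $i-1$ is the minimum number of copies of $xy$ in a factorization of $u$, which is your $c(u)$. One small slip in your bookkeeping is that $\GG(I(H)^{t-i+1}(xy)^{i-1})$ is the set of generators admitting a factorization with \emph{exactly} $i-1$ copies of $xy$, not at most $i-1$ (for instance, if $ab\in E(H)$ with $a,b\notin\{x,y\}$, then $(ab)^t\in S_1$ does not lie in it for $i\ge 2$); the ``at most'' set is the union of these sets over indices $\le i$, and the set difference in \cref{ordering} then still gives $S_i=\{u: c(u)=i-1\}$.
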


We first identify variables that belong to the colon ideals for generators in $S_r$ with $r\geq 2$.

\begin{lemma}\label{l:neighbor sets}
Assume $H$, $x$, $y$, and $G$ are as in \cref{setting}. Let $t \ge 2$ and assume that $I(H)^q$ has a linear quotients ordering for $2 \le q \le t$. Then under the ordering in \cref{ordering}, if $J=((u_1, \ldots, u_{j-1}):u_j)$ for some $u_j \in S_r$ for $r \ge 2$, then
$$N_H(x) \cup N_H(y) \subseteq J.$$
In particular, for each $c \in N_H(x) \cup N_H(y)$, there exists  $u_k\in S_{\ell}$ for some $\ell < r$ such that  $u_k < u_j$ and $(u_k:u_j)=c$.
\end{lemma}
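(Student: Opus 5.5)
Fix $u_j\in S_r$ with $r\ge 2$ and $c\in N_H(x)\cup N_H(y)$. By symmetry of the roles of $x$ and $y$ in the argument, I will assume $c\in N_H(y)$, so $cy\in E(H)$ and $c\neq x$ (recall $xy\notin E(H)$). The plan is to produce an explicit earlier generator $u_k$ with $(u_k:u_j)=c$, which immediately gives $c\in J$. Since $r\ge 2$, every factorization of $u_j$ as a product of $t$ edges of $G$ uses $xy$ at least once. Fix an optimal factorization
\[
u_j = e_1\cdots e_{t-r+1}(xy)^{r-1},\qquad e_i\in E(H),
\]
which contains at least one copy of $xy$.

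The candidate is obtained by replacing one copy of $xy$ with $cy$:
\[
u_k = \frac{c}{x}\,u_j = e_1\cdots e_{t-r+1}(cy)(xy)^{r-2}.
\]
This is a product of $t$ edges of $G$. It is a minimal generator of $I(G)^t$ because it has degree $2t$ and all generators have the same degree. It also has a factorization with only $r-2$ copies of $xy$. So the minimal number of copies of $xy$ over all factorizations of $u_k$ is at most $r-2$, and hence $u_k\in S_\ell$ for some $\ell\le r-1<r$.

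In \cref{ordering} every element of $S_\ell$ precedes every element of $S_r$ when $\ell<r$, so $u_k<u_j$. Finally, $u_k=cu_j/x$ with $x\mid u_j$, so
\[
(u_k:u_j)=\frac{u_k}{\gcd(u_k,u_j)}=\frac{cu_j/x}{u_j/x}=c.
\]
Here one checks that $\gcd(u_k,u_j)=u_j/x$: this holds since $u_k$ and $u_j$ differ only by exchanging one factor $x$ for $c$, and it remains true even if $c$ already divides $u_j$. Thus $c\in J$. The case $c\in N_H(x)$ is identical, replacing $xy$ by $xc$ and getting $(u_k:u_j)=c$ by dividing out $y$. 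This proves both the containment $N_H(x)\cup N_H(y)\subseteq J$ and the ``in particular'' statement.

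I expect the only delicate point to be justifying that $u_k$ lies in a strictly earlier $S$-set. This needs care about the definition of the $S$-sets via minimal numbers of $xy$ factors, together with the fact that $S_\ell$ for $\ell<r$ is ordered entirely before $S_r$, including the special internal ordering of $S_t$. Note also that gapfreeness and the linear quotients hypothesis on $I(H)^q$ are used only to make the ordering well defined, not in this argument.
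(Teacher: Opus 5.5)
Your proof is correct and is essentially the paper's argument. In both, you replace one copy of $xy$ in $u_j$ by $cy$ (or $cx$), note that the resulting generator $u_k$ uses fewer copies of $xy$ and hence lies in an earlier $S$-set, and compute $(u_k:u_j)=c$; your explicit check that $\gcd(u_k,u_j)=u_j/x$, using $c\neq x$, spells out the step the paper states as holding ``by definition.''
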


\begin{proof}
Note that since $r \ge 2$, $xy \shortmid u_j$. Let $c \in N_H(y)$ and set  $u_k=\frac{u_j(cy)}{xy}$. By \cref{ordering}, $u_k \in S_\ell$ for some $\ell<r$, so $u_k < u_j$. In particular, $(u_k:u_j) \in J$. By definition, $(u_k:u_j) = c$, so $c \in J$. If $c \in N_H(x)$ the same argument holds using $u_k=\frac{u_j(cx)}{xy}$.
\end{proof}

The next lemma handles pairs of generators in the same $S$-set, including $S_t$, whose ordering does not require $I(H)$ to have linear quotients.

\begin{lemma}\label{ordering within a set}
Assume $H$, $x$, $y$, and $G$ are as in \cref{setting}. Let $t \ge 2$ and assume that $I(H)^q$ has a linear quotients ordering for $2 \le q \le t$. Assume further that $G$ is gapfree. Then under \cref{ordering}, if $u_i < u_j$ with $u_i,u_j \in S_r$ for some $1 \le r \le t$, then there exists  $u_k \in  S_s$ for some $s \le r$ with $(u_k:u_j)$ of degree one and $(u_k:u_j) \mid (u_i:u_j)$.
\end{lemma}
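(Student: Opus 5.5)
The argument splits on whether $r\le t-1$ or $r=t$, because the ordering on $S_r$ is defined differently in the two ranges.

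\textbf{Case $r\le t-1$.} By \cref{ordering}, the elements of $S_r$ are ordered by starting from a linear quotients ordering $v_1<v_2<\cdots$ of $\GG(I(H)^{t-r+1})$ and multiplying each $v_p$ by $(xy)^{r-1}$. Elements already lying in some $S_\ell$ with $\ell<r$ are then removed. Write $u_i=v_a(xy)^{r-1}$ and $u_j=v_b(xy)^{r-1}$ with $a<b$. Since $(u_i:u_j)=(v_a:v_b)$, the linear quotients property of $I(H)^{t-r+1}$ gives $v_c$ with $c<b$ such that $(v_c:v_b)$ is a variable dividing $(v_a:v_b)$.

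Set $u_k=v_c(xy)^{r-1}$. Then $(u_k:u_j)=(v_c:v_b)$ has degree one and divides $(u_i:u_j)$. It remains to place $u_k$ correctly. If $u_k$ survived in $S_r$, it precedes $u_j$ because $c<b$. If it was removed, it lies in some $S_s$ with $s<r$, so again $u_k<u_j$ and $s\le r$.

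This case uses only the hypothesis that $I(H)^q$ has linear quotients for $q=t-r+1\ge 2$. In this range $2\le t-r+1\le t$, so the hypothesis applies.

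\textbf{Case $r=t$.} Here $u_i=e(xy)^{t-1}$ and $u_j=f(xy)^{t-1}$ with $e,f\in E(H)$, so $(u_i:u_j)=(e:f)$. We may assume $e$ and $f$ share no vertex. Otherwise $(e:f)$ is already a variable, and $u_k=u_i$ works.

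When $e,f$ are disjoint, $H$ is gapfree, so some edge $g\in E(H)$ joins a vertex $a$ of $e$ to a vertex $b$ of $f$. Take $u_k=g(xy)^{t-1}$. Then $(u_k:u_j)=a$, which has degree one and divides $(e:f)$. Moreover $u_k\in S_s$ for some $s\le t$, since $u_k$ involves at most $t-1$ copies of $xy$.

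The difficulty is guaranteeing $u_k<u_j$ when $u_k$ lands in $S_t$ itself, because within $S_t$ the ordering is only the coarse rule \lq\lq edges avoiding $x,y$ first\rq\rq. Suppose $f$ avoids $x,y$. Then $e$ also avoids $x,y$, since $u_i<u_j$. If $g$ meets $\{x,y\}$, we cannot conclude $u_k<u_j$.

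There are two ways to handle this.
\begin{itemize}
\item First, try exchanging the joining edge to avoid $x,y$. If $f=bc$ with $b,c\notin\{x,y\}$ and $a\in\{x,y\}$, say $a=y$, then $u_k=y\,b\,(xy)^{t-1}$. In this case I would instead use the factor $y$: $yb$ is an edge, and $u_j$ contains the factor $xy$. This gives $u_j\cdot(yb)/(f)\cdot$ a regrouping into $(by)(xy)^{t-2}(xy)\cdot$, which shows this element actually lies in a smaller $S$-set or earlier. Here I would use the gapfreeness of $G$ applied to the pair $f$ and $xy$.
\item More carefully, if $g=ya$ with $a\in e$, then $e$ and $xy$ are disjoint edges of $G$ when $x,y\notin e$. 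Similarly, $f$ and $xy$ are disjoint. Since $e$ and $f$ are disjoint too, the gapfree structure of $H$ on the four vertices of $e,f$ should supply an edge between $e$ and $f$ avoiding $x,y$. This holds automatically, because all vertices of $e$ and $f$ are not $x,y$. So $g$ cannot meet $\{x,y\}$ when both endpoints lie in $e\cup f$.
\end{itemize}

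Thus, when $f$ avoids $x,y$, the joining edge $g\subseteq e\cup f$ also avoids $x,y$, and $u_k$ is placed in the first block of $S_t$. Being in that block alone does not order $u_k$ before $u_j$, however. I would therefore either refine the tie-breaking within each block, for instance by a lexicographic order making \lq\lq $g$ shares a vertex with $f$\rq\rq{} compatible, or observe that $u_k$ may equally be replaced by whichever of $u_i$, $u_k$ precedes $u_j$. Precisely this issue, and the case $f\ni x$ or $y$, which uses gapfreeness of $G$, is the main obstacle. I expect to need that within each block the order is arbitrary but that a suitable $u_k$ with smaller $S$-index, obtained via swapping one $xy$ factor, exists whenever the direct choice fails.
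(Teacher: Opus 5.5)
Your treatment of $1\le r\le t-1$ is correct and coincides with the paper's. The case $r=t$, however, has a genuine gap. After reducing to vertex-disjoint $e,f$, you take $u_k=g(xy)^{t-1}$ with $g\in E(H)$ joining $e$ to $f$. This $u_k$ may well lie in the same block of $S_t$ as $u_j$, for instance whenever $f$ avoids $x,y$ and $u_k$ does not drop into an earlier $S$-set. But \cref{ordering} imposes no order inside a block. So $u_k<u_j$ does not follow, and that is what the lemma must deliver for its use in \cref{l:add edge square} and \cref{t:add edge all powers}, as you yourself recognize. The case where $f$ meets $\{x,y\}$ is never worked out.

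None of your proposed repairs closes the gap:
\begin{itemize}
\item Refining the tie-break would prove the lemma only for one special ordering, not for \cref{ordering} as stated. You also do not exhibit a refinement that works.
\item The claim in your first bullet that $yb(xy)^{t-1}$ lies in an earlier $S$-set is false. It has $y$-degree $t$, so each of its $t$ edge factors contains $y$. Its only factorization is therefore $(yb)(xy)^{t-1}$, and it lies in $S_t$.
\item Falling back on $u_i$ does not help either, since $(u_i:u_j)=e$ has degree two.
\end{itemize}

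The missing idea is to apply gapfreeness of $G$, rather than of $H$, to the pair $e$, $xy$, and to leave $S_t$ altogether. If $e$ contains $x$ or $y$, its other endpoint $c$ lies in $N_H(x)\cup N_H(y)$. Otherwise $e$ and $xy$ are disjoint edges of $G$, so some edge of $G$ joins an endpoint $c$ of $e$ to $x$ or to $y$; this edge is not $xy$, hence lies in $E(H)$. Say $cx\in E(H)$; the case $cy\in E(H)$ is symmetric. Then
\[
u_k=u_j\,(cx)/(xy)=f\,(cx)\,(xy)^{t-2}
\]
uses only $t-2$ copies of $xy$. So $u_k\in S_\ell$ with $\ell\le t-1$, and $u_k<u_j$ no matter how $S_t$ is ordered. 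Moreover $(u_k:u_j)=c$, which divides $e=(u_i:u_j)$. This is exactly \cref{l:neighbor sets}, which the paper invokes at this point. Your closing sentence gestures at swapping out a copy of $xy$, but the swap must be made in $u_j$ itself. Once it is, neither the joining edge $g$ nor the gapfreeness of $H$ is needed.
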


\begin{proof}
Assume $1 \le r \le t-1$. Then $u_i=m(xy)^{r-1}$ and $u_j=n(xy)^{r-1}$ where $m,n \in \mathcal{G}(I(H)^{t-r+1})$. Since $r \le t-1$, we have $2 \le t-r+1 \le t$, so $I(H)^{t-r+1}$ has a linear quotients ordering. Thus there exists a monomial $w \in I(H)^{t-r+1}$ with $w < n$ and $(w:n)$ linear dividing $(m:n)$. Set $u_k=w(xy)^{r-1}$ and notice that $u_k\in S_s$ for some $s\le r$. If $s<r$, then $u_k<u_j$ since earlier $S$-sets come first. If $s=r$, then $u_k<u_j$ because the relative order inside $S_r$ is inherited from the order of $I(H)^{t-r+1}$. Since $ (u_k:u_j)=(w:n) $ and $(u_i:u_j)=(m:n)$, the result follows.

Now assume $r=t$. If $(u_i:u_j)$ is linear, the result holds with $u_k=u_i$. If $(u_i:u_j)$ is not linear, then by the definition of $S_t$, we have $(u_i:u_j)=ab$  for some $ab \in E(H)$. If $b=x$ or $b=y$, then $a \in N_H(x) \cup N_H(y)$ and the result follows from \cref{l:neighbor sets}. Otherwise, $ab$ and $xy$ are disjoint edges in $G$ and since $G$ is gapfree, either $a$ or $b$ is in $N_H(x) \cup N_H(y)$ and again the result follows from \cref{l:neighbor sets}.
\end{proof}

We begin with the second power of $I(G)$, which provides the base case for induction on the power.

\begin{lemma}\label{l:add edge square}
Assume $H$, $x$, $y$, and $G$ are as in \cref{setting}. Assume further that $G$ is gapfree. If $I(H)^2$ has linear quotients, then $I(G)^2$ has linear quotients under \cref{ordering}.
\end{lemma}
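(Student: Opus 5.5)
For $t=2$ the $S$-sets are $S_1=\GG(I(H)^2)$, $S_2=\GG(I(H)\,xy)\setminus S_1$, and $S_3=\{(xy)^2\}$. I will check the linear quotients condition directly. Fix $u_i<u_j$; I must produce $u_k<u_j$ with $(u_k:u_j)$ a single variable dividing $(u_i:u_j)$. The argument splits according to which $S$-sets contain $u_i$ and $u_j$.

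\textbf{Same $S$-set.} If $u_i,u_j$ lie in the same $S_r$ with $r\le 2$, this is exactly \cref{ordering within a set} with $t=2$. For $r=1$ it uses that $I(H)^2$ has linear quotients; for $r=2$ it uses gapfreeness of $G$. Since $S_3$ is a singleton, the remaining cases all have $u_i\in S_\ell$ and $u_j\in S_r$ with $\ell<r$.

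\textbf{The case $u_j=(xy)^2\in S_3$.} Write $u_i=ef$ with $e,f\in E(G)$, not both equal to $xy$. Suppose first that one factor is $xy$, say $u_i=e\cdot xy$. Then $(u_i:u_j)$ is $e$ divided by its common part with $xy$. If $e$ meets $\{x,y\}$ in a vertex, the colon is a neighbor $c\in N_H(x)\cup N_H(y)$, and we are done. If $e$ is disjoint from $\{x,y\}$, gapfreeness of $G$ forces an endpoint $c$ of $e$ to lie in $N_H(x)\cup N_H(y)$. In both subcases \cref{l:neighbor sets} supplies $u_k=(xy)(cy)$ or $(xy)(cx)$, which lies in an earlier set. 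If instead neither factor is $xy$, then $u_i\in I(H)^2$ and the colon $u_i:(xy)^2$ still contains some vertex $c\ne x,y$ of an edge of $u_i$. If that edge meets $\{x,y\}$, or is disjoint from it, then again by gapfreeness some such $c$ is adjacent to $x$ or $y$ in $H$, and $c\mid(u_i:u_j)$. Here I must make sure that the chosen neighbor really divides the colon and is not cancelled by $x$ or $y$. This holds because $c\notin\{x,y\}$.

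\textbf{The case $u_i\in S_1$, $u_j\in S_2$.} Write $u_j=e\cdot xy$ with $e=ab\in E(H)$ and $u_i=fg$ with $f,g\in E(H)$. Since $u_j\notin I(H)^2$, the variables $x$ and $y$ cannot both be absorbed by $u_i$. I take the colon $u_i:u_j$ and split into two subcases.
\begin{itemize}
\item \emph{Some variable $c\in N_H(x)\cup N_H(y)$ divides $(u_i:u_j)$.} Then \cref{l:neighbor sets} finishes the argument.
\item \emph{No such variable divides the colon.} Every variable of $u_i$ outside $u_j$ is then non-adjacent in $H$ to both $x$ and $y$. I claim this contradicts gapfreeness. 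At least one of $x,y$, say $y$, does not divide $u_i$. Since $y$ is a vertex of $u_j$ and $\deg u_i=\deg u_j$, the edges $f,g$ must use vertices outside $u_j$. An edge among $f,g$ avoiding $\{x,y\}$ with both ends outside $N_H(x)\cup N_H(y)$ would form a gap with $xy$ in $G$. So each of $f,g$ has an endpoint in $\{a,b,x\}$ or in the neighborhoods.
\end{itemize}
Chasing this through the few configurations, including the one where $x$ divides $u_i$ via an edge $xc$ with $c\in N_H(x)$, gives either a neighbor in the colon or an explicit $u_k\in S_1$. An example of the latter is $u_k=e\cdot(\text{edge of }H\text{ through }x)$, whose colon is one variable dividing $(u_i:u_j)$.

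I expect this last case to be the main obstacle. The colon may be the product of two vertices that are each adjacent to $a$ or $b$ rather than to $x$ or $y$. Then one needs a $u_k\in S_1$ built from $e$ and an edge of $H$ at $x$ or $y$, and must confirm that this product is not itself in $S_2$. I would organize it by the multiset $\{x,y,a,b\}\cap\operatorname{supp}(u_i)$. That intersection, together with the gapfree condition applied to the pairs $(f,xy)$ and $(g,xy)$, leaves only a handful of configurations to check.
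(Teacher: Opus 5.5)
Your handling of pairs within a single $S$-set and of $u_j=(xy)^2$ is correct and matches the paper. The gap is in the case you flagged: $u_i\in S_1$ and $u_j=(ab)(xy)\in S_2$, in the subcase where no vertex of $N_H(x)\cup N_H(y)$ divides $(u_i:u_j)$.

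First, this subcase does not contradict gapfreeness. Take $H$ with edges $cd,ef,ce,cx,ex$ and with $y$ isolated in $H$. Then $H$ and $G=H+xy$ are both gapfree, and $H$ is co-chordal, so the hypothesis on $I(H)^2$ holds. Here $u_i=(cd)(ef)\in S_1$, and $u_j=(ce)(xy)\in S_2$ because no edge of $H$ contains $y$. We get $(u_i:u_j)=df$, while $N_H(x)\cup N_H(y)=\{c,e\}$.

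Second, the witness you suggest, $u_k=(ab)\cdot(xz)$ with $xz\in E(H)$, can never work in this subcase. Its colon is $(u_k:u_j)=z\in N_H(x)$, and by the subcase hypothesis no such variable divides $(u_i:u_j)$. In the example it gives $c$ or $e$, and neither divides $df$. Your concern that $u_k$ might lie in $S_2$ is also unnecessary: any product of two edges of $H$ lies in $S_1$.

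The missing step, which is the paper's argument, goes as follows. By gapfreeness, write $u_i=(cd)(c'd')$ with $c,c'\in (N_H(x)\cup N_H(y))\setminus\{x,y\}$. Suppose neither $c$ nor $c'$ divides $(u_i:u_j)$. Then $c\neq c'$, since otherwise $c^2\mid u_j$, which is impossible for $c\notin\{x,y\}$. Moreover, both $c$ and $c'$ must divide the $H$-edge of $u_j$, so $u_j=(cc')(xy)$. Consequently $(u_i:u_j)=dd'/\gcd(dd',xy)$. This is linear unless $d,d'\notin\{x,y\}$. In that remaining case, suppose (say) $cx\in E(H)$. Then $u_k=(c'd')(cx)\in S_1$ precedes $u_j$, and $(u_k:u_j)=d'$ divides $dd'$; the case $cy\in E(H)$ is symmetric.

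So the fix is to keep one edge of $u_i$ and reroute the other through $x$ or $y$, not to keep the edge $ab$ of $u_j$. In the example, $(ef)(cx)$ has colon $f$, which divides $df$.
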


\begin{proof}
Let $u_j$ be fixed. We show that $J=((u_1,\dots ,u_{j-1}):u_j)$ is generated by variables. To do this, we show that if $u_i<u_j$, then there exists  $u_k<u_j$ such that $(u_k:u_j)$ is linear and $(u_k:u_j) \mid (u_i:u_j)$. Note that if $j \le s_1$ or equivalently if $u_j \in S_1$, the result follows from \cref{ordering within a set}. So, we assume $j \geq s_1+1$.

Assume $u_j \in S_2$ and $i < j$. If $u_i \in S_2$, then the result follows from \cref{ordering within a set}. Suppose $u_i\in S_1$ and let $u_i=(cd)(ef)$ be an optimal factorization. Since $G$ is gapfree, either $\{c,d\} \cap \{x,y\} \ne \emptyset$ or there is an edge in $G$ connecting $\{c,d\}$ to $\{x,y\}$. A similar statement holds for $\{e,f\}$. In either case, without loss of generality and by the choice of optimal factorization, we can assume $c,e \in N_H(x) \cup N_H(y) \subseteq J$ and $c,e \notin \{x,y\}$.  If $c$ or $e$ divides $(u_i:u_j)$, then the result follows from \cref{l:neighbor sets}. Therefore, we may assume $u_j=(ce)(xy)$. Then either $u_i:u_j$ is of degree one, or $d,f\notin \{x,y\}$. Since there is nothing to show in the first case, let us assume that $d,f\notin \{x,y\}$. If $c\in N_H(x)$, then $u_k=(ef)(cx)\in S_1$ is the desired generator. Similarly, if $c\in N_H(y)$, then $u_k=(ef)(cy)\in S_1$ satisfies the required condition.

Next assume $u_j \in S_3$, so $u_j = (xy)^2$. If $u_i \in S_2$ then $u_i = (cd)(xy)$ for some $\{c,d\}\in E(H)\subset E(G)$. Now since $G$ is gapfree, then without loss of generality, $c \in N_H(x) \cup N_H(y)$. Then $(u_i:u_j)$ is either linear or a multiple of $c$, in which case the result follows from \cref{l:neighbor sets}. Finally, assume $u_i \in S_1$. As above, we may assume $u_i=(cd)(ef)$ with $c,e \in N_H(x) \cup N_H(y)$. If $d$ or $f$ is $x$ or $y$, then $(u_i:u_j)$ is either linear, so $u_k=u_i$, or a multiple of $c$ or $e$, in which case the result follows from \cref{l:neighbor sets}. If $d,f \ne x,y$, then $(u_i:u_j) = cdef$, which is a multiple of $c \in N_H(x) \cup N_H(y)$ and the result again follows from \cref{l:neighbor sets}.
\end{proof}

We record a stronger conclusion from the preceding proof for use in the induction.

\begin{remark}\label{r:smaller set}
In the setting of \cref{l:add edge square}, if $u_i \in S_r$ and $u_j \in S_s$ with $r<s$ and $(u_i:u_j)$ is not linear and is not divisible by any element of $N_H(x) \cup N_H(y)$, then the proof above shows there exists  $u_k \in S_p$ for some $p <s$ such that  $(u_k:u_j)$ is linear and divides $(u_i:u_j)$.
\end{remark}

We now extend \cref{l:add edge square} to all powers, establishing the edge addition reduction stated in Lemma \ref{Nursel_Susan}.

\begin{theorem}\label{t:add edge all powers}
Assume $H$, $x$, $y$, and $G$ are as in \cref{setting}. Assume further that $G$ is gapfree. Let $t \ge 2$ and assume that $I(H)^q$ has a linear quotients ordering for $2 \le q \le t$. Then $I(G)^t$ has linear quotients under \cref{ordering}.
\end{theorem}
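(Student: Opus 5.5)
We argue by induction on $t$. The base case $t=2$ is \cref{l:add edge square}. For $t\ge 3$ we assume, as inductive hypothesis, that $I(G)^{t-1}$ has linear quotients under \cref{ordering}, with $S$-sets $\SS_1,\dots,\SS_t$. This is legitimate because the hypothesis for $t$ contains the hypothesis for $t-1$. Fix $u_j\in S_s$ and an earlier $u_i\in S_r$, so $r\le s$. We must produce $u_k<u_j$ with $(u_k:u_j)$ a variable dividing $(u_i:u_j)$. When $r=s\le t$ this is exactly \cref{ordering within a set}. When $s=t+1$, so $u_j=(xy)^t$, any $u_i$ contains an edge of $H$ in an optimal factorization. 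By gapfreeness of $G$, that edge has an endpoint $c\in N_H(x)\cup N_H(y)$ with $c\mid (u_i:u_j)$ (or else the colon is already linear), and \cref{l:neighbor sets} finishes. So the real work is the case $r<s\le t$.

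In that case, write $u_j=(xy)v$ with $v\in\SS_{s-1}$, using \cref{l:reduce xy}(1). First we would dispose of the easy situations. If $(u_i:u_j)$ is linear, take $u_k=u_i$. If $(u_i:u_j)$ is divisible by a vertex of $N_H(x)\cup N_H(y)$, use \cref{l:neighbor sets}.

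So assume neither holds. Take an optimal factorization $u_i=e_1\cdots e_{t-r+1}(xy)^{r-1}$. Since $r<s$, we have $t-r+1\ge 2$ edges of $H$ available. Gapfreeness of $G$ forces each $e_\ell$ to meet $N_H(x)\cup N_H(y)\cup\{x,y\}$. Because no vertex of $N_H(x)\cup N_H(y)$ divides the colon, the neighbor-endpoints of these edges must be absorbed by $u_j$.

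The idea is to peel off a common edge and reduce to $I(G)^{t-1}$. We look for an edge $f$ of $G$ with $f\shortmid u_i$ and $f\shortmid u_j$ whose removal keeps us inside the right $S$-sets. If $r\ge 2$, take $f=xy$. Then $u_i/xy\in\SS_{r-1}$ precedes $v=u_j/xy\in\SS_{s-1}$, so by induction there is $w<v$ in $I(G)^{t-1}$ with $(w:v)$ a variable dividing $(u_i/xy : v)=(u_i:u_j)$. We then set $u_k=(xy)w$. By \cref{l:reduce xy}(1), $u_k$ lies in some $S_{p}$ with $p\le$ the index of $w$ plus one. We need $u_k<u_j$, which is clear if $p<s$. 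If $p=s$, then $w$ also lies in $\SS_{s-1}$, and $u_k<u_j$ holds because the order on $S_s$ is inherited from the order on $\SS_{s-1}$. We would check this inheritance directly from \cref{ordering}, including the special ordering of $S_t$ when $s=t$.

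If $r=1$, $xy$ does not divide $u_i$. Instead we use an edge $e$ of $H$ common to an optimal factorization of $u_i$ and to $u_j$, and apply \cref{l:reduce xy}(2). Part (a) gives $u_j/e\in\SS_s$ and $u_i/e\in\SS_1$. Induction gives a suitable $w$, and by part (b) $u_k=ew\in S_{p}$ with $p\le$ the index of $w$. If no common edge exists, we fall back on the explicit exchange argument used in \cref{l:add edge square}. There, $u_i$ has two edges of $H$ each with a neighbor-endpoint $c$ lying in $u_j$, and we swap $cx$ or $cy$ in for an edge as in that proof, using \cref{r:smaller set} as the model.

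The main obstacle is this case: $r=1$, or more generally $u_i$ and $u_j$ share no edge in compatible optimal factorizations. The subsidiary difficulty is verifying that $u_k$ really precedes $u_j$ when it lands in the same $S$-set, since ordering within $S_s$ is only inherited from $I(H)^{t-s+1}$ and not from $I(G)^{t-1}$. To handle it, we would first try to reduce to the case where the colon is supported away from $x,y$ and $N_H(x)\cup N_H(y)$. We would then show, by gapfreeness, that some edge of $u_i$ can be rewritten to share an edge with $u_j$, thereby forcing a common divisor edge.
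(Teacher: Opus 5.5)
\textbf{There is a genuine gap.} Your easy cases are fine: $r=s\le t$ via \cref{ordering within a set}, and $u_j=(xy)^t$ via gapfreeness plus \cref{l:neighbor sets}. The gap is exactly the ``subsidiary difficulty'' you name, and it comes from your inductive hypothesis.

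Suppose you only know that $I(G)^{t-1}$ has linear quotients. Then the witness $w$ you obtain may lie in the same $S$-set as the reduced $u_j$. In the common-$H$-edge step this is fatal. Here $u_i/e\in\SS_1$ and $u_j/e\in\SS_s$, and $w$ may lie in $\SS_s$, so that $ew\in S_s$. Whether $ew<u_j$ is then decided by the chosen linear quotients order of $I(H)^{t-s+1}$. That order has no relation to the order of $I(H)^{t-s}$ multiplied by $e$, so $ew$ may come after $u_j$.

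Your $xy$-peeling for $r\ge 2$ does inherit the order when $s\le t-1$. For $s=t$, however, it needs a consistent choice inside the two groups of $S_t$, and \cref{ordering} does not impose one.

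The paper removes this problem by inducting on a stronger claim: if $u_i\in S_r$, $u_j\in S_p$ with $r<p$, and $(u_i:u_j)$ is nonlinear and not divisible by any vertex of $N_H(x)\cup N_H(y)$, then there is a witness $u_k\in S_s$ with $s<p$ strictly. Its base case is \cref{r:smaller set}. The claim survives both dividing by $xy$ (both indices drop by one) and dividing by a common $H$-edge (by \cref{l:reduce xy}(2)). So you never need to compare two elements of the same $S$-set.

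\textbf{The second gap} is that the case $r=1$ with no common edge, which is the heart of the proof, appears only as an intention. The $cx$/$cy$ swap from the square does not generalize by itself. What is needed is the following.
\begin{enumerate}
\item Set $A=(u_i:u_j)$ and $B=(u_j:u_i)$. Use the criterion that replacing a factor $f$ of $u_i$ by an edge $g$ gives $(u:u_j)\mid A$ whenever $\deg_w g-\deg_w f\le \deg_w B$ for every variable $w$.
\item If $B$ has a variable $v\notin\{x,y\}$, take the edge $vz$ of $u_j$ and swap it in for a factor $f_q$, choosing $f_q\ni z$ when $z\mid u_i$. This creates a common $H$-edge.
\item If $\operatorname{supp}(B)\subseteq\{x,y\}$ and $p\ge 3$, swap in $xy$ itself. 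This gives $u\in S_{\le 2}$ with $u/xy\in\SS_1$; then peel off $xy$ using the strengthened claim, since $1<p-1$.
\item If $p=2$ and $x,y\mid B$, the paper does not look for a common edge. It replaces $f_1=a_1b_1$, with $a_1\in N_H(x)\cup N_H(y)$ and $b_1\mid A$, by $a_1z$ for a suitable $z\in\{x,y\}$. The resulting colon is exactly $A/b_1$, and a secondary induction on $\deg A$ finishes.
\item If $B=x^d$ (or $y^d$), $u_j$ must contain an $H$-edge $xc$, and $c$ divides $u_i$, so $xc$ can be swapped in.
\end{enumerate}
Without the strict-inequality claim, the $xy$-swap, and the induction on $\deg(u_i:u_j)$, your outline does not close.
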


\begin{proof}
We prove the result by induction on $t$. When $t=2$, the claim follows from \cref{l:add edge square}.

Let $S_1,\dots,S_{t+1}$ be the $S$-sets of $I(G)^t$ and let $\mathcal S_1,\dots,\mathcal S_t$ be the $S$-sets of $I(G)^{t-1}$. Let $u_j$ be fixed. We  prove that $J=((u_1,\dots ,u_{j-1}):u_j)$ is generated by variables by showing that for any $u_i < u_j$ there exists a $u_k < u_j$ with $(u_k:u_j)$ linear and dividing $(u_i:u_j)$. If $u_i, u_j \in S_r$ for some $r$, then the result follows from \cref{ordering within a set}. So, we may assume that $u_i \in S_r$ and $u_j \in S_p$ for some $r<p$ and $p \ge 2$. By \cref{l:neighbor sets} we also assume that $(u_i:u_j)$ has degree at least $2$ and is not divisible by any element of $N_H(x) \cup N_H(y)$. Thus it suffices to show the inductive step under these  assumptions. To do so, we prove the following stronger statement by induction on $t$.

\textbf{Claim.} Suppose $u_i\in S_r$ and $u_j\in S_p$ with $r<p$. If $(u_i:u_j)$ is not linear and is not divisible by any element of $N_H(x)\cup N_H(y)$, then there exists a generator $u_k\in S_s$, with $s<p$, such that $(u_k:u_j)$ is linear and $(u_k:u_j)\mid (u_i:u_j)$.

Note that when $t=2$, the base case remains true under the stronger assumptions of the claim by \cref{r:smaller set}. Now assume $t\ge 3$ and that the claim holds for $t-1$.

Since $u_i\in S_r$ and $u_j\in S_p$, choose optimal factorizations
\[
  u_i=f_1\cdots f_{t-r+1}(xy)^{r-1}  \text{ and }
u_j=e_1\cdots e_{t-p+1}(xy)^{p-1}
\]
where $f_1,\dots ,f_{t-r+1}\in E(H)$ and $ e_1,\dots ,e_{t-p+1}\in E(H) $.

First, suppose $r\geq 2$. By \Cref{l:reduce xy}, we have $\frac{u_i}{xy}\in \SS_{r-1}$ and $\frac{u_j}{xy}\in \SS_{p-1}$.  Moreover, $(\frac{u_i}{xy}: \frac{u_j}{xy})=(u_i:u_j)$. By the inductive hypothesis for $t-1$, there exists an element $\hat{u}\in\SS_{s}$ with $s<p-1$ such that $(\hat{u}:\frac{u_j}{xy})$ is a variable dividing $(\frac{u_i}{xy}:\frac{u_j}{xy})=(u_i:u_j)$.  By \Cref{l:reduce xy}, we have $\hat{u} xy \in S_{s'}$ for some $s'\le s+1< p$. Therefore, $\hat{u} xy < u_j$ and $(\hat{u} xy : u_j)= (\hat{u} :\frac{u_j}{xy} )$. Thus, $\hat{u} xy$ is the desired earlier generator.

Now assume $r=1$. We prove the claim in this case by a secondary induction on $d=\deg(u_i:u_j)\ge 2$. Since $r=1$, $u_i=f_1\cdots f_t \in S_1$. Thus we will show that for $u_j \in S_p$ with $p \ge 2$ and for every $u\in S_1$, if $\deg(u:u_j)\ge 2$ and $(u:u_j)$ is not divisible by any element of $N_H(x)\cup N_H(y)$, then there exists a generator $u_k\in S_s$ with $s<p$ such that $(u_k:u_j)$ is linear and $(u_k:u_j)\mid (u:u_j)$.

We first isolate a reduction that will be used several times below.

\noindent\textbf{Common edge reduction.}
Suppose that $u\in S_1$ and that an edge $e\in E(H)$ occurs both in a factorization of $u$ and in the chosen optimal factorization of $u_j$. Assume further that $(u:u_j)$ is not linear and that no variable in $N_H(x)\cup N_H(y)$ divides $(u:u_j)$. Then there exists a generator $u_k\in S_s$, with $s<p$, such that $(u_k:u_j)$ is linear and $(u_k:u_j)\mid (u:u_j)$. Indeed, by part~\textup{(2a)} of \Cref{l:reduce xy}, we have $\frac{u}{e}\in\mathcal S_1$ and  $\frac{u_j}{e}\in\mathcal S_p$. Moreover, $\left(\frac{u}{e}:\frac{u_j}{e}\right)=(u:u_j)$.  The induction hypothesis for $t-1$ therefore gives a generator $\widetilde u\in\mathcal S_s$, with $s<p$, such that $\left(\widetilde u:\frac{u_j}{e}\right)$ is linear and divides $(u:u_j)$. Set $u_k=e\widetilde u$. Since $e\neq xy$, part~\textup{(2b)} of \Cref{l:reduce xy} gives $u_k\in S_{s'}$ for some $s'\leq s<p$. Thus $u_k<u_j$ and $(u_k:u_j) = \left(\widetilde u:\frac{u_j}{e}\right)$. Hence $u_k$ has the required properties.

Applying the \textbf{common edge reduction} to $u=u_i$, we may assume from now on that
\[
\{f_1,\ldots,f_t\}
\cap
\{e_1,\ldots,e_{t-p+1}\}
=
\varnothing.
\]

Set $A=(u_i:u_j)$ and $B=(u_j:u_i)$. For each variable $w$, write $\alpha_w=\deg_w(A)$ and $\beta_w=\deg_w(B)$. Since
\[
\deg_w(u_i)-\deg_w(u_j)=\alpha_w-\beta_w
\]
for every $w$, and $u_i$ and $u_j$ have the same total degree, we have $\deg A=\deg B=d\geq 2$.

We use the following degree calculation repeatedly and we refer to it as $(\ast)$: Suppose that $f$ is an edge occurring in the chosen factorization of $u_i$, that $g$ is another edge of $H$, and that $u=\frac{u_i g}{f}$. Then, for every variable $w$,
\[
\deg_w(u:u_j)
=
\max\bigl\{
\alpha_w-\beta_w+\deg_w(g)-\deg_w(f),0
\bigr\}.
\]
In particular, if $\deg_w(g)-\deg_w(f)\leq \beta_w$ for every variable $w$,  then $\deg_w(u:u_j)\leq \alpha_w$ for every variable $w$. Therefore $(u:u_j)\mid A$.

Since $G$ is gapfree, each edge $f_k$ either has an endpoint in $\{x,y\}$ or is connected to the edge $\{x,y\}$. So, we can write $f_k= a_kb_k$ with $a_k\in N_H(x)\cup N_H(y)$.  Since no element of $N_H(x)\cup N_H(y)$ divides $A$, after relabeling the $f_k$'s, we may write $A=b_1\cdots b_d$ where  none of the $b_1,\ldots, b_d$ lies in $N_H(x)\cup N_H(y)$.

To complete the proof, we consider the following cases based on the form of $B$.

\noindent
\textbf{Case 1.} Suppose $B$ has a variable divisor outside $\{x,y\}$, say $v \in \operatorname{supp} (B)$ with $v\notin \{x,y\}$. Since $v$ occurs in $u_j$ outside the factors $xy$, after relabeling we may write $e_1=vz$. Since $v\mid B$, we have $\beta_v\geq1$.

First suppose that $z\nmid u_i$. Set $u=\frac{u_i e_1}{f_1}\in S_1$. We use $(\ast)$ with $g=e_1=vz$ and $f=f_1=a_1b_1$. Since $z\nmid u_i$ while $z\mid u_j$, we have $\beta_z=\deg_z(u_j)-\deg_z(u_i)\geq1$. For $w=v$ or $w=z$, we have
\[
\deg_w(e_1)-\deg_w(f_1)\leq1\leq\beta_w,
\]
and for every other variable $w$,
\[
\deg_w(e_1)-\deg_w(f_1)\leq0\leq\beta_w.
\]
Since the conditions in $(\ast)$ are met, we have $(u:u_j)\mid A$. If $(u:u_j)$ is linear, then $u$ is the desired earlier generator. Otherwise, since $(u:u_j)\mid A$, it is not divisible by any variable in $N_H(x)\cup N_H(y)$. Since $u$ and $u_j$ share the edge $e_1$ in their chosen factorizations, the \textbf{common edge reduction} therefore gives the desired earlier generator.

Now suppose that $z\mid u_i$. Choose an edge $f_q$ in the chosen factorization of $u_i$ that contains $z$ and set $u=\frac{u_i e_1}{f_q}\in S_1$. Since both $e_1$ and $f_q$ contain $z$, we have $\deg_z(e_1)-\deg_z(f_q)=0$. The only variable whose exponent can increase under this replacement from $u_i$ to $u$ is $v$. So, $\deg_v(e_1)-\deg_v(f_q)\leq1\leq\beta_v$. For all other variables the exponent change is nonpositive. Hence $(\ast)$ again gives $(u:u_j)\mid A$. If $(u:u_j)$ is linear, we are done. Otherwise, the \textbf{common edge reduction} applied to the common edge $e_1$ gives the desired earlier generator. This completes Case 1.

\noindent
\textbf{Case 2.}
Assume that $\operatorname{supp} (B) \subseteq \{x,y\}$ and $p\ge 3$. We choose an edge $f_q$ in the factorization of $u_i$ as follows.
\begin{itemize}
    \item If $x,y\in\operatorname{supp}(B)$, choose any $f_q$.
    \item If $B=x^d$, then $\beta_y=0$, so $\deg_y(u_i)\geq\deg_y(u_j)$.  Since the chosen optimal factorization of $u_j$ contains $(xy)^{p-1}$, we have $\deg_y(u_j)\geq p-1>0$. Thus $y\mid u_i$ and we choose $f_q$ containing $y$.
    \item  Similarly, if $B=y^d$, we choose $f_q$ containing $x$.
\end{itemize}

Set $u=\frac{u_i(xy)}{f_q}$. This factorization of $u$ uses one copy of $xy$. So $u\in S_{s'}$ for some  $s'\leq2$. Since $p\geq3$, we have $s'<p$. Hence $u<u_j$.

We next verify that $(u:u_j)\mid A$. If $x,y\in\operatorname{supp}(B)$, then $\beta_x,\beta_y\geq1$. For $w\notin\{x,y\}$, replacing the factor $f_q$ by $xy$ cannot increase the $w$ exponent since
\[
\deg_w(xy)-\deg_w(f_q)=-\deg_w(f_q)\leq0\leq\beta_w.
\]
For $w\in \{x,y\}$, the exponent can increase by at most one and $\deg_w(xy)-\deg_w(f_q)\leq1\leq\beta_w$. Thus
\[
\deg_w(xy)-\deg_w(f_q)\leq\beta_w
\]
for every variable $w$. Thus the conditions in $(\ast)$ hold and $(u:u_j)\mid A$.

If $B=x^d$, then $f_q$ contains $y$. So replacing $f_q$ by $xy$ does not increase the $y$ exponent. The only variable whose exponent can increase is $x$ and
\[
\deg_x(xy)-\deg_x(f_q)\leq1\leq\beta_x=d.
\]
Thus $(\ast)$ again holds. The case $B=y^d$ is symmetric. Therefore, in every case, $(u:u_j)\mid A$.

If $(u:u_j)$ is linear, then $u$ is the desired earlier generator. Suppose otherwise. Since $(u:u_j)\mid A$, it is not divisible by any variable in $N_H(x)\cup N_H(y)$. From the explicit factorization of $u$, we have $\frac{u}{xy}=\frac{u_i}{f_q} \in \SS_1$. Also, by \Cref{l:reduce xy}, we have $\frac{u_j}{xy}\in\SS_{p-1}$. In addition,  $\left(\frac{u}{xy}:\frac{u_j}{xy}\right)=(u:u_j)$. Since $1<p-1$, the induction hypothesis for $t-1$ gives a generator $\hat u\in\SS_s$ with $s<p-1$ such that $\left(\hat u:\frac{u_j}{xy}\right)$ is linear and divides $(u:u_j)$. By \Cref{l:reduce xy}, we have $\hat u(xy)\in S_{s''}$ for some $s''\leq s+1<p$.  Hence $\hat u(xy)<u_j$  and $(\hat u(xy):u_j) = \left(\hat u:\frac{u_j}{xy}\right)$. Thus $\hat u(xy)$ is the desired earlier generator. This completes Case 2.

\medskip

\noindent
\textbf{Case 3.}
Suppose that $p=2$ and $\operatorname{supp} (B) \subseteq \{x,y\}$.

First suppose that $x,y\in\operatorname{supp}(B)$. Recall that $f_1=a_1b_1$ such that $a_1\in N_H(x)\cup N_H(y)$ and $b_1\mid A$. Choose $z\in\{x,y\}$ such that $a_1z\in E(H)$  and set $u=\frac{u_i(a_1z)}{f_1} =\frac{u_i z}{b_1}$. Then $u\in S_1$ and hence $u<u_j$.

Since $b_1\mid A$ and $z\mid B$  while $\gcd(A,B)=1$, we have $b_1\neq z$. The replacement decreases the $b_1$ exponent by one, increases the $z$ exponent by one, and leaves every other exponent unchanged.  In other words, we have $\deg_{b_1}(u:u_j)=\alpha_{b_1}-1$ while $\deg_z(u:u_j) = \max\{-\beta_z+1,0\}$ because $\beta_z\geq1$. For every other variable $w$, we have $\deg_w(u:u_j)=\alpha_w$. Consequently, $(u:u_j)=\frac{A}{b_1}$. In particular,  $\deg(u:u_j)=d-1$.

If $d=2$, then $(u:u_j)$ is linear and $u$ is the desired generator. If $d\geq3$, then $2\leq\deg(u:u_j)=d-1<d$. Moreover, $(u:u_j)\mid A$, so it is not divisible by any variable in $N_H(x)\cup N_H(y)$. The secondary induction hypothesis applied to the pair $(u,u_j)$ therefore yields a generator $u_k\in S_s$ with $s<p$ such that $(u_k:u_j)$ is linear and divides $(u:u_j)$. Since $(u:u_j)\mid A$, this gives the desired generator.

The remaining cases to consider are $B=x^d$ and $B=y^d$. By symmetry, it is enough to treat $B=x^d$. In this case, $\deg_x(u_j)=\deg_x(u_i)+d$. Since $d\geq2$ and the chosen factorization of $u_j$ contains only one copy of $x$ coming from the factor $xy$, at least one of the $H$ edge factors of $u_j$ must contain $x$. After relabeling, write $e_1=xc$.

Since $\operatorname{supp}(B)=\{x\}$, we have $\beta_c=0$. As $c\mid u_j$, it follows that $\deg_c(u_i)\geq\deg_c(u_j)\geq1$. Choose an edge $f_q$ in the factorization of $u_i$ that contains $c$ and set $u=\frac{u_i e_1}{f_q}$. Then $u\in S_1$. So $u<u_j$. Also, notice that  $u$ and $u_j$ share the edge $e_1$.

Because both $e_1$ and $f_q$ contain $c$, the replacement does not increase the $c$ exponent. The only variable whose exponent can increase is $x$  and
\[
\deg_x(e_1)-\deg_x(f_q)\leq1\leq\beta_x=d.
\]
Thus the conditions in $(\ast)$ hold. So, we have $(u:u_j)\mid A$. If $(u:u_j)$ is linear, then $u$ is the desired generator. Otherwise, $(u:u_j)$ is not divisible by any variable in $N_H(x)\cup N_H(y)$, and the \textbf{common edge reduction} applied to $e_1$ gives the desired earlier generator. This completes the case $B=x^d$. The case $B=y^d$ follows by interchanging $x$ and $y$.
\end{proof}


\section{El Zahar and Erd\H{o}s reduction}\label{sec:domination}

After removing isolated vertices, a gapfree graph with at least one edge is connected. Throughout this section, $G$ is a connected gapfree graph with matching number $3$.  A set $D\subseteq V(G)$ is called a \emph{dominating set} of $G$ if every vertex outside $D$ is adjacent to at least one vertex in $D$. By \cite[Theorem~5]{EZE} (see also \cite{BT1990}), $G$ has a dominating set that induces either a clique or a path $P_3$ of length $2$.

We first treat domination by a path and then by a clique, considering the possible sizes of a minimal dominating clique. We combine these arguments with the removal of duplicate vertices, parallel vertices, and whiskers to reduce the problem to co-chordal graphs and graphs on at most seven vertices. The remaining finite list is enumerated in Appendix \ref{app:code} and displayed in Appendix \ref{app:graphs}.

\begin{lemma}\label{lem.P3Domination}
Suppose that $G$ has no duplicate vertices and is not dominated by a clique. Then $|V(G)|\leq 7$.
\end{lemma}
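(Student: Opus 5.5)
The plan is to work with a maximum matching rather than with the dominating $P_3$ from \cite[Theorem~5]{EZE}; the $P_3$ is kept only as a fallback for the last case. Fix a maximum matching $M=\{u_1v_1,u_2v_2,u_3v_3\}$ of $G$, and let $T=V(G)\setminus\{u_1,v_1,u_2,v_2,u_3,v_3\}$. Since $|V(G)|=6+|T|$, it suffices to show $|T|\le 1$. So assume $|T|\ge 2$ and aim for a contradiction: either two vertices of $T$ are duplicates, or $G$ is dominated by a clique.

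\textbf{Step 1 (augmenting-path structure).} $T$ is independent by maximality of $M$. If $u_i\sim t$ and $v_i\sim t'$ with $t\neq t'$ in $T$, then replacing $u_iv_i$ by $u_it$ and $v_it'$ gives a matching of size $4$, which is impossible. So for each $i$, one of two things happens:
\begin{itemize}
\item[(a)] at most one endpoint of $u_iv_i$, say $v_i$, has neighbours in $T$; or
\item[(b)] both endpoints are adjacent to the same single vertex $t_i\in T$ and to no other vertex of $T$.
\end{itemize}
Case (b) should be excluded (or absorbed) by a gapfree argument. Pick $t'\in T\setminus\{t_i\}$; it has a neighbour, since $G$ is connected and $T$ is independent, and that neighbour lies on some $e_j$ with $j\neq i$. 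The edges $t_iu_i$ and $t'w$ must then be joined by an edge, which is checked against the augmenting-path restriction. If this does not immediately give a contradiction, I would retain (b) as a small special case to be handled at the end. Thus the generic situation is
\[
N_G(t)\subseteq\{v_1,v_2,v_3\}\quad\text{for every } t\in T .
\]

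\textbf{Step 2 (gapfree constraints).} Suppose $t\sim v_i$, $t'\sim v_j$ with $t\neq t'$ and $i\neq j$. The edges $tv_i$ and $t'v_j$ are disjoint, and $tt'\notin E(G)$, so gapfreeness forces $v_iv_j\in E(G)$, $t\sim v_j$, or $t'\sim v_i$. Now consider the set $V'=\{v_i : v_i \text{ has a neighbour in } T\}$.
\begin{itemize}
\item If $V'$ induces a clique, enlarge it to a clique $K$ dominating $G$. Each $u_i$ is adjacent to $v_i$. Each $t\in T$ is adjacent to some vertex of $V'$. The remaining $v_i\notin V'$ need checking: use the edges $u_iv_i$ and the gapfree condition against an edge $tv_j$ to show that $\{v_i\}\cup V'$ is still a clique, or that $u_i$ or $v_i$ is adjacent to $V'$. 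This contradicts the assumption that $G$ is not dominated by a clique.
\item If $v_iv_j\notin E(G)$ for some $v_i,v_j\in V'$, the constraint says every $t\in T$ adjacent to $v_i$ or $v_j$ is adjacent to both, unless only one vertex of $T$ is involved. This leaves very few possible neighbourhoods $N_G(t)\subseteq\{v_1,v_2,v_3\}$ that are consistent with gapfreeness. A short pigeonhole argument on these possible neighbourhoods, using $|T|\ge 2$ together with the requirement that no clique dominates, should produce two vertices of $T$ with equal neighbourhoods, contradicting the absence of duplicates.
\end{itemize}

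\textbf{Main obstacle.} The main obstacle is the bookkeeping in Step 2, namely the interaction between the cases where $V'$ has size $1$, $2$ or $3$ and the special configuration (b) from Step 1. If the pure matching argument stalls there, I would fall back on the hypothesis in the form \cite[Theorem~5]{EZE} supplies: a dominating induced path $a-b-c$. I would then choose $M$ to contain edges at $a$, $b$ and $c$, which pins down where the $T$-neighbours can sit and shortens the case analysis.
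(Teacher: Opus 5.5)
There is a genuine gap. Your opening reduction is fine: $|T|\le 1$ is equivalent to the claim, and the dichotomy (a)/(b) of Step~1 is correct. But every step where the hypotheses \emph{no duplicate vertices} and \emph{no dominating clique} must actually be used is only announced. Configuration (b) is never disposed of. If $w\in e_j$ ($j\neq i$) is a neighbour of $t'$, gapfreeness applied to $t_iu_i$, $t_iv_i$, $u_iv_i$ against $t'w$ yields only that $w$ is adjacent to at least two of $t_i,u_i,v_i$, which by itself is no contradiction. So the ``generic situation'' $N_G(t)\subseteq\{v_1,v_2,v_3\}$ is an unproved assumption. Handling (b) requires re-choosing the matching (e.g.\ trading $u_iv_i$ for $u_it_i$, which exposes $v_i$) or a Gallai--Edmonds type argument, and then redoing Step~2. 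The domination check for the $v_i\notin V'$ in your clique bullet is likewise left undone.

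More seriously, the non-clique bullet is false as stated. If $v_iv_j\notin E(G)$, your gapfree constraint only forbids having $t\in N_T(v_i)\setminus N_T(v_j)$ and $t'\in N_T(v_j)\setminus N_T(v_i)$ at the same time. In other words, $N_T(v_i)$ and $N_T(v_j)$ are nested; it does not make each $t$ adjacent to one of them adjacent to both. For instance, take $T=\{t,t'\}$ with $N_G(t)=\{v_1\}$, $N_G(t')=\{v_1,v_2\}$ and $v_1v_2\notin E(G)$. This meets every constraint you list, and $t,t'$ are not duplicates (whiskers are not excluded in this lemma), so no pigeonhole argument confined to $T$ can finish.

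Eliminating this configuration needs the matched vertices:
\begin{enumerate}
\item The pair $u_2v_2$, $tv_1$ forces $u_2v_1\in E(G)$.
\item After relabelling so that $u_3v_1\in E(G)$, non-domination by the edges $v_1u_2$ and $v_1u_3$ forces $v_3\not\sim v_1,u_2$ and $v_2\not\sim u_3$.
\item So the pair $u_3v_3$, $t'v_2$ gives $v_2v_3\in E(G)$.
\item The pair $u_1v_1$, $v_2v_3$ then makes $u_1$ adjacent to $v_2$ or $v_3$, and one ends with a $4$-matching (e.g.\ $\{tv_1,t'v_2,u_1v_3,u_2u_3\}$) or duplicates among $t',u_1,u_2$.
\end{enumerate}
Chains of this kind, for all configurations ($|V'|=3$ with various edges among the $v_i$, larger $T$, triangles from (b)), are the actual content of the lemma, and the proposal contains none of them.

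Your fallback to the dominating induced path $x-y-z$ is exactly where the paper starts. Its proof is the finite case analysis on $X=N_G(x)\setminus\{y\}$, $Y$, $Z$. It uses that $Z$ is independent and that $X\setminus N_G(z)$ and $Y\setminus N_G(x_1)$ are complete to $Z$. It then splits into $|Y|=0$ and $|Y|\geq 1$, and each branch ends in a $4$-matching, a dominating triangle, or duplicate vertices.
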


\begin{proof}
Suppose that $|V(G)|\geq 8$. By \cite[Theorem~5]{EZE}, $G$ has a dominating path $x-y-z$. Since no clique dominates $G$, we have $xz\notin E(G)$, and there are vertices $x_1,z_1$ such that $xx_1,zz_1\in E(G)$ and $yx_1,zx_1,xz_1,yz_1\notin E(G)$. Set
\begin{align*}
X&=N_G(x)\setminus\{y\}=\{x_1,\ldots,x_p\},\\
Y&=N_G(y)\setminus(X\cup\{x,z\})=\{y_1,\ldots,y_q\},\\
Z&=V(G)\setminus(\{x,y,z\}\cup X\cup Y)=\{z_1,\ldots,z_r\}.
\end{align*}
Every vertex of $Z$ is adjacent to $z$ but not to $x$ or $y$. We have $p,r\geq 1$, $q\geq 0$, and $p+q+r\geq 5$.

We first record three consequences of gapfreeness. The set $Z$ is independent since an edge inside $Z$ would form a gap with $xy$. Applying gapfreeness to $xx_i$ and $zz_j$ shows that every vertex of $X$ not adjacent to $z$ is adjacent to all of $Z$; in particular, $x_1z_j\in E(G)$ for every $j$. Applying gapfreeness to $yy_i$ and $x_1z_j$ shows that every vertex of $Y$ not adjacent to $x_1$ is adjacent to all of $Z$.

\medskip\noindent
\textbf{Case 1: $q=0$.}
No vertex $x_i$ is adjacent to both $z$ and a vertex $z_j\in Z$ since otherwise the triangle $x_i z z_j$ dominates $G$. Thus, with $A=X\cap N_G(z)$ and $B=X\setminus A$, there are no edges between $A$ and $Z$, while every vertex of $B$ is adjacent to all of $Z$. Consequently, $N_G(z_j)=\{z\}\cup B$ for every $j$. Since $G$ has no duplicate vertices, $r=1$ and $p\geq 4$.

There are no edges between $A$ and $B$ since an edge $ab$ with $a\in A$ and $b\in B$ would give the dominating triangle $xab$. Also, no $b\in B$ is adjacent to $y$ since $xyb$ would be a dominating triangle. Since $x_1\in B$, an edge from $A$ to $y$, or an edge inside $A$, would form a gap with $x_1z_1$. Finally, an edge inside $B$ would form a gap with $yz$. Therefore $X$ is independent and its vertices have only the two possible open neighborhoods $\{x,z\}$ and $\{x,z_1\}$. Since $p\geq 4$, two vertices of $X$ are duplicates, a contradiction.

\medskip\noindent
\textbf{Case 2: $q\geq 1$.}
We first show that $r=1$. Suppose that $r\geq 2$. If $p\geq 2$, then $\{xx_2,yy_1,zz_2,x_1z_1\}$ is a matching of size $4$, a contradiction. Thus $p=1$. If a vertex $y_i\in Y$ is adjacent to both $x_1$ and some $z_j$, then the triangle $x_1y_iz_j$ dominates $G$: every vertex of $Y$ is adjacent to $x_1$ or $z_j$, every vertex of $Z$ is adjacent to $x_1$, and $x,y,z$ are adjacent to $x_1,y_i,z_j$, respectively. Thus each $y_i\in Y$ is adjacent either to $x_1$ and no vertex of $Z$, or to all of $Z$ and not to $x_1$. All vertices of $Z$ therefore have the same open neighborhood, namely
\[
\{z,x_1\}\cup\{y_i\in Y:y_ix_1\notin E(G)\}.
\]
This gives duplicate vertices, a contradiction. Hence $r=1$ and $p+q\geq 4$.

Suppose that $p\geq 2$. We claim that there are no edges from $X$ to $Y\cup\{z\}$, and every vertex of $X$ is adjacent to $z_1$. If $p\geq 3$, an edge $x_i z$, necessarily with $i\neq 1$, gives the matching $\{x_i z,x_1z_1,xx_j,yy_1\}$ of size $4$ for $j\notin\{1,i\}$. Thus no vertex of $X$ is adjacent to $z$, so every vertex of $X$ is adjacent to $z_1$ by the second observation. An edge $x_i u$, with $u\in Y$, would then give the matching $\{x_i u,xx_j,x_kz_1,yz\}$ of size $4$ for distinct $j,k\neq i$.

If $p=2$, then $q\geq 2$. An edge $x_i u$ with $u\in Y$ gives the matching $\{x_i u,xx_h,yv,zz_1\}$ of size $4$ where $h\neq i$ and $v\in Y\setminus\{u\}$. Thus there are no $X$--$Y$ edges. The third observation now implies that every vertex of $Y$ is adjacent to $z_1$. An edge $x_2z$ would then give the matching $\{x_2z,xx_1,yy_1,y_2z_1\}$ of size $4$. Hence no vertex of $X$ is adjacent to $z$ and both vertices of $X$ are adjacent to $z_1$. This proves the claim.

There are also no edges from $X$ to $y$ since an edge $x_iy$ would give the dominating triangle $xyx_i$. Moreover, $X$ is independent since an edge inside $X$ would form a gap with $yz$. Hence $N_G(x_i)=\{x,z_1\}$ for every $i$, giving duplicate vertices, a contradiction. Therefore $p=1$ and $q\geq 3$.

The set $Y$ is independent since an edge $y_i y_j$ would give the matching $\{y_i y_j,yy_k,xx_1,zz_1\}$ of size $4$ for $k\notin\{i,j\}$. Each vertex of $Y$ is adjacent to at least one of $x_1,z_1$, but not both since $x_1y_i z_1$ would be a dominating triangle as above. Moreover, $y_i z\in E(G)$ forces $y_i x_1\in E(G)$ by gapfreeness applied to $y_i z$ and $xx_1$. Thus the only possible open neighborhoods of vertices of $Y$ are
\[
\{y,x_1\},\qquad \{y,x_1,z\},\qquad \{y,z_1\}.
\]
Since $q\geq 3$ and no vertices are duplicates, $q=3$ and all three neighborhoods occur. Choose distinct $u,v,w\in Y$ with $uz,vz_1\in E(G)$. Then $\{uz,vz_1,yw,xx_1\}$ is a matching of size $4$, a final contradiction.
\end{proof}

A dominating set $D$ of $G$ is \emph{minimal} if no proper subset of $D$ dominates $G$.

\begin{lemma}\label{lem.cliqueDomination}
Let $K_r$ be an $r$-clique whose vertex set is a minimal dominating set of $G$. Then $r\leq 3$.
\end{lemma}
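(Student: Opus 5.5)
Minimality of the dominating clique gives each clique vertex a private neighbor, and these private neighbors together with the clique force a matching of size at least $r$. We argue by contradiction: suppose $D=\{v_1,\ldots,v_r\}$ induces a clique, is a minimal dominating set of $G$, and $r\geq 4$.

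Since $D\setminus\{v_i\}$ does not dominate $G$, there is a vertex $w_i$ not dominated by $D\setminus\{v_i\}$. The vertex $w_i$ cannot be a $v_j$ with $j\neq i$, because $v_j\in D\setminus\{v_i\}$. If $w_i=v_i$, then $v_i$ would be adjacent to $v_j\in D\setminus\{v_i\}$ (here $r\geq 2$), a contradiction. Hence $w_i\notin D$. Since $D$ dominates $G$, $w_i$ is adjacent to $v_i$ and to no other vertex of $D$. These private neighbors $w_1,\ldots,w_r$ are pairwise distinct, because $w_i$ is adjacent to $v_i$ while $w_j$ is not adjacent to $v_i$ for $j\neq i$.

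Then $\{v_1w_1,\ldots,v_rw_r\}$ is a matching of size $r\geq 4$ in $G$. This contradicts $\nu(G)=3$, so $r\leq 3$.

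There is no real obstacle here; the only point to check is that $w_i\notin D$, which uses $r\geq 2$. Gapfreeness is not even needed, only the matching number bound. I expect the paper may refine this later, for instance by using gapfreeness to exclude $r=3$ under extra hypotheses, but the statement as given follows from the matching argument alone.
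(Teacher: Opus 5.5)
Your proof is correct and is essentially the paper's argument: minimality gives each clique vertex $v_i$ a neighbor $w_i\notin D$ whose only neighbor in $D$ is $v_i$, these $w_i$ are distinct, and $\{v_iw_i\}$ is a matching of size $r$, so $r\leq\nu(G)=3$. You also spell out why $w_i\notin D$ (using the clique structure and $r\geq 2$), which the paper leaves implicit.
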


\begin{proof}
The case $r=1$ is immediate. For $r\geq 2$, write $V(K_r)=\{x_1,\ldots,x_r\}$. By minimality, each $x_i$ has a vertex $y_i\notin V(K_r)$ with $N_G(y_i)\cap V(K_r)=\{x_i\}$. These vertices are distinct, so $\{x_iy_i:1\leq i\leq r\}$ is a matching. Hence $r\leq\nu(G)=3$.
\end{proof}

\begin{lemma}\label{lem.triangleDomination}
Suppose that $G$ has no duplicate vertices and no whiskers, and that $G$ has a minimal dominating triangle. Then $|V(G)|\leq 7$.
\end{lemma}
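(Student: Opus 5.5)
Let $\{x_1,x_2,x_3\}$ be the minimal dominating triangle. By minimality, as in the proof of \cref{lem.cliqueDomination}, each $x_i$ has a private neighbor $y_i\notin\{x_1,x_2,x_3\}$ with $N_G(y_i)\cap\{x_1,x_2,x_3\}=\{x_i\}$. Then $M=\{x_1y_1,x_2y_2,x_3y_3\}$ is a maximum matching, since $\nu(G)=3$. Let
\[
W=V(G)\setminus\{x_1,x_2,x_3,y_1,y_2,y_3\}.
\]
The goal is to show $|W|\leq 1$. The tools are the maximality of $M$ (no augmenting structure), gapfreeness, and the absence of duplicates and whiskers.

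First I would show that $W$ is independent. An edge $w w'$ in $W$ would let us exchange an edge of $M$ and get a matching of size $4$. To see this, pick $x_i$ adjacent to $w$ (domination). If $w'$ is adjacent to some $x_j$ with $j\neq i$, use $\{ww',\, x_iy_i,\, x_jy_j,\, x_k y_k\}$. This is a matching of size 4 because $ww'$ is disjoint from $M$, which is a contradiction. So any edge inside $W$ contradicts $\nu=3$ directly. Similarly, no $w\in W$ is adjacent to any $y_j$ with $y_j$ then free: if $w y_j\in E$ and $w$ is adjacent to $x_i$, then for $i\neq j$ we get $\{wy_j, x_iy_i, x_jx_k, \ldots\}$-type augmentations. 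I would check exactly which $y$--$W$ and $y$--$y$ edges survive. The expected outcome is $N_G(w)\subseteq\{x_1,x_2,x_3\}$ for all $w\in W$, with possible exceptions only when $w$ is adjacent to a single $x_i$ and to $y_i$.

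Next I would classify the possible neighborhoods of $w\in W$. No whiskers means $\deg w\geq 2$. No duplicates means distinct $w$ have distinct neighborhoods, and also $N(w)\neq N(y_j)$. Gapfreeness constrains these further. For example, if $wx_i,wx_j\in E$ with $y_k$ adjacent only to $x_k$ among the triangle, then the edge $wx_i$ versus the edges at $y_k$ forces adjacencies. I expect gapfreeness between $wx_i$ and $x_jy_j$ to force $w$ adjacent to $x_j$ or $y_j$, or $y_j$ adjacent to $x_i$; the last is excluded by privacy. So every $w$ sees, for each $j$, either $x_j$ or $y_j$. Combined with the fact that $w$ has no edges to the $y$'s except in controlled cases, this leaves $N(w)=\{x_1,x_2,x_3\}$ essentially. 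It might also allow a few mixed options like $\{x_1,x_2,y_3\}$. In those mixed cases one then checks that two such $w$'s, or $w$ together with $y_3$, create a 4-matching such as $\{wy_3,\, w'x_3,\, x_1y_1,\, x_2y_2\}$.

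Finally I would count. Every remaining configuration allows at most one vertex of $W$, since two vertices with neighborhood $\{x_1,x_2,x_3\}$ are duplicates and the mixed types yield 4-matchings when doubled. This gives $|V(G)|\leq 7$. The main obstacle is the bookkeeping in the second step. The edges among the $y_i$, and between the $y_i$ and $W$, interact with both gapfreeness and augmenting paths of length 3, such as $y_i - w - x_j - \cdots$. I would organize that step by fixing $w$ and running through $i\in\{1,2,3\}$ the alternatives "$w x_i\in E$" versus "$w y_i\in E$". For each alternating path starting at $w$, I would exhibit an explicit matching of size $4$ rather than arguing abstractly.
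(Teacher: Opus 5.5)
Your setup is sound: $M=\{x_1y_1,x_2y_2,x_3y_3\}$ is a maximum matching, and $W$ is independent because an edge inside $W$ is disjoint from $M$. There is, however, a genuine gap.

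The classification step rests on an invalid deduction. For $i\neq j$ the edges $wx_i$ and $x_jy_j$ are always joined by the triangle edge $x_ix_j$, so gapfreeness says nothing about them. More generally, any two disjoint edges that both meet $\{x_1,x_2,x_3\}$ are automatically joined. So gapfreeness only constrains pairs in which one edge is a $y_iy_j$ or $y_iw$ edge. A single edge $wy_j$ also does not give a $4$-matching: $\{wy_j,x_iy_i,x_jx_k\}$ can be completed only if $y_k$ has a neighbor in $W\setminus\{w\}$. Since $M$ is maximum, every contradiction with $\nu(G)=3$ comes from an $M$-augmenting path joining two distinct vertices of $W$, so the matching constraints always involve a pair $w\neq w'$. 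Hence your claims that each $w$ sees $x_j$ or $y_j$ for every $j$, and that $N_G(w)$ is essentially $\{x_1,x_2,x_3\}$, are unsupported.

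The counting step has a more serious problem. You apply the no-whisker hypothesis only to vertices of $W$, and without it at the $y_i$ the conclusion is false. Take the triangle $x_1x_2x_3$ with pendant vertices $y_1,y_2,y_3$, and add $w,w'$ with $N_G(w)=\{x_1,x_2,x_3\}$ and $N_G(w')=\{x_1,x_2\}$. Every edge meets the triangle, so the graph is gapfree, and $w'$ sees neither $x_3$ nor $y_3$. The triangle is a vertex cover, so $\nu=3$, and it is a minimal dominating triangle. There are no duplicate vertices and $\deg w,\deg w'\geq 2$, yet $|W|=2$. These two neighborhoods are neither equal nor of your ``mixed'' type, so your final paragraph does not exclude them.

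The missing ingredient is that each private neighbor $y_i$ is not a whisker, so it has a neighbor in $\{y_j,y_k\}\cup W$. This is exactly where the paper uses the hypothesis in its case analysis. With it, and assuming $|W|\geq 2$, your route can be completed in two cases.

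If no vertex of $W$ is adjacent to any $y_i$, some edge $y_iy_j$ exists. The augmenting paths $u\,x_i\,y_i\,y_j\,x_j\,u'$ for $u\neq u'$ in $W$, together with $|N_G(u)\cap\{x_1,x_2,x_3\}|\geq 2$, make any two vertices of $W$ duplicates.

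If some $w$ has $wy_1\in E(G)$, use the augmenting paths $w'\,x_1\,y_1\,w$, $w\,y_1\,x_1\,x_j\,y_j\,w'$, $w\,y_1\,x_1\,x_2\,y_2\,y_3\,x_3\,w'$ and $w\,x_1\,y_1\,y_j\,x_j\,w'$, together with the no-whisker condition at $w'$, $y_2$ and $y_3$. These give $N_G(y_2)=\{x_2,y_1\}$, $N_G(y_3)=\{x_3,y_1\}$, and $N_G(u)\subseteq\{x_2,x_3,y_1\}$ for all $u\in W$. Gapfreeness of $y_1y_2$ against $ux_3$, and of $y_1y_3$ against $ux_2$, then forces $uy_1\in E(G)$. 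Finally, the absence of duplicates, including with $y_2$ and $y_3$, gives $|W|\leq 1$.

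None of this bookkeeping is in the proposal, and the one step that is spelled out is incorrect.
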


\begin{proof}
Suppose $|V(G)|\geq 8$. Let $D=\{x,y,z\}$ be a minimal dominating triangle. Choose $x$ to maximize $|N_G(x)\setminus D|$ among the vertices of $D$, and set $X=N_G(x)\setminus D$. Among the two remaining vertices of $D$, choose $y$ to maximize $|N_G(y)\setminus(D\cup X)|$. Set $Y=N_G(y)\setminus(D\cup X)$ and $Z=V(G)\setminus(D\cup X\cup Y)$. Write $X=\{x_1,\ldots,x_p\}$, $Y=\{y_1,\ldots,y_q\}$, and $Z=\{z_1,\ldots,z_r\}$. Every vertex of $Z$ is adjacent to $z$ but not to $x$ or $y$. By the choice of $x$, we have $p\geq q$. Every vertex of $Z$ is a neighbor of $z$ outside $D\cup X$, so the choice of $y$ gives $q\geq r$.

By minimality of $D$, we may choose the labels $x_1,y_1,z_1$ so that
\[
xx_1,yy_1,zz_1\in E(G)
\quad\text{and}\quad
yx_1,zx_1,xy_1,zy_1,xz_1,yz_1\notin E(G).
\]
Indeed, $\{y,z\}$ does not dominate $G$ but it dominates every vertex of $D$ because $D$ is a triangle. Thus some vertex outside $D$ is adjacent to neither $y$ nor $z$. Since $D$ dominates $G$, this vertex must be adjacent to $x$, and we may label it $x_1$. The same argument applied to $\{x,z\}$ and $\{x,y\}$ gives $y_1$ and $z_1$, respectively. By the definitions of the partition, $x_1\in X$, $y_1\in Y$, and $z_1\in Z$. So all three sets are nonempty. Consequently, $p\geq q\geq r\geq 1$. In addition, $|V(G)|\geq 8$ gives $p+q+r\geq 5$.

We first show that $r=1$. Notice that $Z$ is independent since an edge inside $Z$ would form a gap with $xy$. Suppose that $r\geq 2$. Then $p,q\geq 2$. Since $z_1$ is not a whisker and its only neighbor in $D$ is $z$, it must have a neighbor in $X\cup Y$. If $x_iz_1\in E(G)$, choose $k\neq i$ in $[p]$. Then $\{x_iz_1,xx_k,zz_2,yy_1\}$ is a matching of size $4$. Similarly, if $y_iz_1\in E(G)$, choose $k\neq i$ in $[q]$. Then $\{y_iz_1,yy_k,zz_2,xx_1\}$ is a matching of size $4$. Both possibilities contradict $\nu(G)=3$. Therefore $r=1$.

\medskip\noindent
\textbf{Case 1: $q\geq 2$.}
Then $p\geq 2$. There are no $X$--$Y$ edges: an edge $x_iy_j$ would give the matching $\{x_iy_j,xx_k,yy_\ell,zz_1\}$ of size $4$ where $k\neq i$ and $\ell\neq j$. Consequently, $Y$ is independent since an edge inside $Y$ would form a gap with $xx_1$.

Since $y_1$ is not a whisker and is adjacent to neither $x$ nor $z$, we have $y_1z_1\in E(G)$. Gapfreeness applied to $xx_1$ and $y_1z_1$ then gives $x_1z_1\in E(G)$ since $xy_1,xz_1,x_1y_1\notin E(G)$. If $y_2z\in E(G)$, then $\{y_2z,yy_1,xx_2,x_1z_1\}$ is a matching of size $4$, a contradiction. Thus both $y_1$ and $y_2$ have neighbors only in $\{y,z_1\}$. Since neither is a whisker, $N_G(y_1)=N_G(y_2)=\{y,z_1\}$, contradicting the absence of duplicate vertices.

\medskip\noindent
\textbf{Case 2: $q=1$.}
Then $r=1$ and $p\geq 3$. The set $X$ is independent since an edge $x_ix_j$ would give the matching $\{x_ix_j,xx_k,yy_1,zz_1\}$ of size $4$ where $k\notin\{i,j\}$.

Since $X$ is independent and $x_1y,x_1z\notin E(G)$, the only possible neighbors of $x_1$ other than $x$ are $y_1$ and $z_1$. As $x_1$ is not a whisker, it is adjacent to at least one of them. Interchanging $y,z$ and $y_1,z_1$ if necessary, assume $x_1y_1\in E(G)$. This interchange preserves the partition since $X$ is unchanged and the only vertices outside $D\cup X$ are $y_1,z_1$ with $zy_1,yz_1\notin E(G)$.

We next determine the possible neighbors of $x_2$ and $x_3$. For $i\in\{2,3\}$, let $j$ be the other index. Neither $x_iy$ nor $x_iz_1$ is an edge: these two possible edges would give the matchings $\{x_iy,x_1y_1,xx_j,zz_1\}$ and $\{x_iz_1,x_1y_1,xx_j,yz\}$ of size $4$, respectively. Since $X$ is independent, the only possible neighbors of $x_i$ other than $x$ are $y_1$ and $z$. As $x_i$ is not a whisker, it is adjacent to at least one of them. If $x_iz\in E(G)$, gapfreeness applied to $x_iz$ and $x_1y_1$ forces $x_iy_1\in E(G)$ since $x_ix_1,zx_1,zy_1\notin E(G)$. Thus both $x_2$ and $x_3$ are adjacent to $y_1$ and each has open neighborhood either $\{x,y_1\}$ or $\{x,y_1,z\}$.

Finally, consider the neighborhood of $x_1$. If $x_1z_1\in E(G)$, then $\{x_1z_1,x_2y_1,xx_3,yz\}$ is a matching of size $4$. Hence $x_1z_1\notin E(G)$ and $N_G(x_1)=\{x,y_1\}$. If either $x_2$ or $x_3$ is not adjacent to $z$, it has the same open neighborhood as $x_1$. Otherwise, $N_G(x_2)=N_G(x_3)=\{x,y_1,z\}$. In either case, two vertices are duplicates, a contradiction.

Both cases are impossible, so $|V(G)|\leq 7$.
\end{proof}

\begin{lemma}\label{lem:H-structure}
Let $H$ be a gapfree graph with $\nu(H)\leq 2$. Then either $H$ is co-chordal, or its unique nontrivial component is a pentagon and all other components are isolated vertices.
\end{lemma}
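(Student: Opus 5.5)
We argue by removing isolated vertices and assuming $H$ is connected, gapfree, with $\nu(H)\le 2$ and not co-chordal; we must show $H$ is a pentagon. If $\nu(H)=1$, then Katzman and H\`a--Van Tuyl give $2\le \reg I(H)\le 2$, so $\reg I(H)=2$. By Fr\"oberg's theorem $H$ is co-chordal. So we may assume $\nu(H)=2$.

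Since $H$ is not co-chordal, the complement $\overline H$ contains an induced cycle $C$ of length $m\ge 4$. If $m=4$, the complement of $C_4$ is two disjoint edges, so $H$ has an induced $2K_2$, contradicting gapfreeness. If $m\ge 6$, the complement of $C_m$ restricted to those $m$ vertices is an induced subgraph of $H$ containing a matching of size $3$. For instance, with cycle vertices $v_1,\dots,v_m$, the pairs $v_1v_3$, $v_2v_5$, $v_4v_6$ are nonadjacent in $C_m$ when $m\ge 6$ (for $m=6$, check $v_4v_6$ and $v_2v_5$; for $m\ge 7$, use $v_1v_4,v_2v_5,v_3v_6$). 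This contradicts $\nu(H)=2$. Hence every induced cycle of length at least $4$ in $\overline H$ is a $5$-cycle, and $H$ contains an induced pentagon $P$ on vertices $v_1,\dots,v_5$, since $\overline{C_5}\cong C_5$.

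The main step is to show $V(H)=V(P)$. Suppose $w\notin V(P)$. By connectivity (after the reduction to a connected $H$), we may take $w$ adjacent to some vertex of $P$. I would first show $w$ is adjacent to every vertex of $P$. Say $wv_1\in E(H)$. Then $\{wv_1, v_2v_3, v_4v_5\}$ is a matching of size $3$, a contradiction. The same holds for any neighbor $v_i$: the pentagon minus $v_i$ is a path on four vertices, which has a perfect matching of two edges. So no vertex outside $P$ can be adjacent to $P$. Connectivity then forces $V(H)=V(P)$, and since $P$ is induced, $H=C_5$.

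I expect this step to be the only delicate point, and the observation about $C_5$ minus a vertex makes it immediate. The remaining care is in the case $m\ge 6$: I would write out explicit matchings in $\overline{C_m}$ for $m=6$ and for $m\ge7$ separately to make sure the three chosen pairs are genuinely nonadjacent on the cycle and pairwise disjoint.
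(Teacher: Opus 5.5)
Your proof is correct, but it takes a different route from the paper's. The paper argues through regularity. Fr\"oberg's theorem and the H\`a--Van Tuyl bound give $2<\reg I(H)\le \nu(H)+1\le 3$, so $\reg I(H)=\nu(H)+1$. Trung's characterization of this equality then says the nontrivial component is a pentagon or a Cameron--Walker graph. The latter is excluded because it would have $\nu_{\mathrm{ind}}=\nu=2$, which is impossible for a gapfree graph.

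You instead work directly with induced cycles of $\overline H$:
\begin{itemize}
\item length $4$ is ruled out by gapfreeness;
\item length $m\ge 6$ is ruled out because $\overline{C_m}$ contains three disjoint edges;
\item this leaves an induced pentagon $P$ in $H$, and since $P-v_i$ has a perfect matching, no vertex outside $P$ can be adjacent to $P$.
\end{itemize}
The paper's proof is two lines in its regularity-theoretic language, but it rests on a substantial external theorem. Yours is self-contained and purely combinatorial, and it shows why the pentagon is the only obstruction: it is the only complement-cycle length compatible with both gapfreeness and $\nu\le 2$. Your argument in fact needs no regularity at all. When $\nu(H)=1$, an induced pentagon already contains two disjoint edges, so you can drop the appeal to Katzman, H\`a--Van Tuyl and Fr\"oberg and treat all $\nu(H)\le 2$ at once.

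A few small points to fix:
\begin{itemize}
\item The sentence ``I would first show $w$ is adjacent to every vertex of $P$'' contradicts what you then correctly prove, namely that $w$ has \emph{no} neighbor on $P$. Delete it.
\item The case split for $m\ge 6$ is unnecessary. Each of the triples $\{v_1v_3,v_2v_5,v_4v_6\}$ and $\{v_1v_4,v_2v_5,v_3v_6\}$ works for every $m\ge 6$.
\item The reduction to connected $H$ needs one line of justification. Two nontrivial components would give a gap. An isolated vertex of $H$ is adjacent to everything in $\overline H$, so it cannot lie on an induced cycle of length at least $4$. Hence deleting isolated vertices changes neither co-chordality nor the conclusion.
\end{itemize}
Alternatively, skip the reduction entirely. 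Once no vertex outside $P$ has a neighbor on $P$, any further edge would form a gap with an edge of $P$, so all remaining vertices are isolated.
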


\begin{proof}
Gapfreeness allows at most one component containing an edge. Suppose $H$ is not co-chordal. Fr\"oberg's characterization \cite{F} and the regularity bound give $2<\reg~ I(H)\leq\nu(H)+1\leq 3$, so $\reg~ I(H)=3=\nu(H)+1$. By \cite[Theorem~11]{TNT}, the nontrivial component is a pentagon or a Cameron--Walker graph. The latter has induced matching number equal to its matching number, namely $2$, contradicting gapfreeness. Thus the nontrivial component is a pentagon.
\end{proof}

\begin{lemma}\label{lem:K2-reduction}
Suppose that $\{x,y\}$ is a minimal dominating set of $G$ and $xy\in E(G)$. Furthermore, suppose that $G$ has no duplicate or parallel vertices and no whiskers attached to $x$ or $y$. Then either $|V(G)|\leq 7$ or $G$ is co-chordal.
\end{lemma}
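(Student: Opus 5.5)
Assume $|V(G)|\geq 8$; the goal is to show that $G$ is co-chordal. The plan is to partition the vertices according to the dominating edge, use $\nu(G)=3$, gapfreeness and the absence of duplicates to pin down one part, and then read off co-chordality. Set
\[
X=N_G(x)\setminus(N_G[y]),\qquad Y=N_G(y)\setminus(N_G[x]),\qquad C=(N_G(x)\cap N_G(y)).
\]
Since $\{x,y\}$ dominates, $V(G)=\{x,y\}\cup X\cup Y\cup C$. By minimality $X,Y\neq\emptyset$; otherwise a single vertex would dominate. Pick $x_1\in X$ and $y_1\in Y$.

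\textbf{Structure of $X$ and $Y$.} Gapfreeness applied to $xx_i$ and $yy_j$ forces $x_iy_j\in E(G)$ for all $x_i\in X$, $y_j\in Y$, because $xy_j,yx_i\notin E(G)$. Thus $X$ and $Y$ are completely joined. An edge $x_ix_k$ inside $X$ together with $yy_1$ gives no gap, so gapfreeness alone does not make $X$ independent. Instead I will use the matching number. If $|X|\geq 2$ and $|Y|\geq 2$, the edges $x_1y_1,x_2y_2,xy$ already form a matching of size $3$, and any further edge disjoint from these contradicts $\nu(G)=3$. This should force $|X|+|Y|+|C|$ to be small, unless most vertices lie in $C$ or in $X$ (symmetrically $Y$) with strongly restricted neighborhoods. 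Since there are no whiskers at $x$ or $y$, every vertex of $X$ has a neighbor besides $x$; by the join, $y_1$ always works, so this condition is automatic when $Y\neq\emptyset$. I therefore expect the whisker hypothesis to be needed only to exclude degenerate configurations.

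\textbf{Bounding the parts via matchings and duplicates.} The vertices of $X$ (respectively $Y$) have neighborhoods of the form $\{x\}\cup Y\cup(\text{part of }X\cup C)$. The plan is to show, using the matching bound, that for $|V(G)|\geq 8$ the sets $X$ and $Y$ are cliques or are independent with only a few neighborhood types. Independent vertices of $X$ with identical neighborhoods would be duplicates, and adjacent ones with identical closed neighborhoods would be parallel; both are excluded. This leaves $|X|,|Y|\leq 2$, say, so that at least $|V(G)|-6\geq 2$ vertices lie in $C$. Inside $C$ I will argue the same way. An edge $c_ic_j$ in $C$ together with $xx_1$ and $yy_1$ gives a matching of size $3$, and any extra disjoint edge is forbidden. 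Combined with gapfreeness, this should force the edges of $G[C]$ and the $C$--$(X\cup Y)$ edges to follow a threshold-like nested pattern, and the no-duplicate/no-parallel condition then caps the number of vertices in $C$ with a given pattern.

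\textbf{Concluding co-chordality.} In the large case the surviving configuration should be explicit: $x,y$ are adjacent to all of $C$, and $X\cup Y$ forms a complete bipartite-plus-cliques structure. I will show that the complement has no induced cycle of length $\geq 4$. Since $\nu(G)\leq 3$ and every vertex is near $x$ or $y$, $\overline{G}$ is essentially a disjoint union of small pieces. Alternatively, I can invoke Lemma \ref{lem:H-structure}: delete $x$ (or a suitable vertex), so that the remaining graph has matching number $\leq 2$, and rule out the pentagon because a pentagon plus the extra structure would violate the vertex count or the hypotheses. The main obstacle is the middle step, which is a case analysis on $|X|,|Y|$ and the edges inside $C$. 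I would first handle $|X|=|Y|=1$, where $C$ is large, and try to show that $C$ minus a bounded set consists of vertices whose neighborhoods are nested, forcing duplicates or parallels beyond $7$ vertices.
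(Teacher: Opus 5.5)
There is a genuine gap. Your key structural step is false, and the rest of the plan is built on it.

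\textbf{The join claim fails.} Gapfreeness says nothing about the pair $xx_i$, $yy_j$: the edge $xy$ already joins them, so $\{x,x_i,y,y_j\}$ never induces two disjoint edges, and $x_iy_j\in E(G)$ is not forced. Here is a concrete instance satisfying every hypothesis except $|V(G)|\geq 8$. Take $G_1$ from the appendix, with vertex labels as drawn there. The edge $\{a,x\}$ is a minimal dominating edge. The private neighbors of $a$ are $b,e$, and those of $x$ are $c,d$. Yet $bd,ce\notin E(G_1)$.

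Consequently three parts of your plan collapse:
\begin{itemize}
\item the matching $\{x_1y_1,x_2y_2,xy\}$;
\item your remark that the no-whisker hypothesis is automatic because every $x_i$ sees $y_1$;
\item the final ``complete bipartite plus cliques'' configuration.
\end{itemize}
In fact the no-whisker hypothesis is essential. It guarantees that every vertex of $X\cup Y$ has a neighbor outside $\{x,y\}$, and it supplies the final contradiction in the argument.

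\textbf{The remaining steps are not arguments.} The bound $|X|,|Y|\leq 2$, the nested pattern inside $C$, and the description of $\overline{G}$ as ``a disjoint union of small pieces'' are stated as expectations, not derived. Your fallback also does not work as stated. It is deleting both $x$ and $y$ that forces matching number at most $2$, not deleting $x$ alone. Even then, co-chordality of $G-\{x,y\}$ does not lift to $G$ automatically, because in $\overline{G}$ the vertex $y$ is adjacent to all of $X$ and $x$ to all of $Y$.

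\textbf{The missing idea.} You need a sharper use of $\nu(G)=3$ on $H=G-\{x,y\}$, with $X,Y,C$ as in your proposal. Take any matching $M$ of size $2$ in $H$; since $|V(H)|\geq 6$, at least two vertices are uncovered. The uncovered vertices are independent and lie entirely in $X$ or entirely in $Y$. Otherwise, an uncovered vertex of $X$ together with one of $Y$, or an uncovered vertex of $C$ together with any other uncovered vertex, can be matched to $x$ and $y$, giving a matching of size $4$.

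The argument then runs as follows.
\begin{itemize}
\item Assume $|X|\geq|Y|$. Combined with the absence of duplicates and whiskers, this observation rules out $|Y|\geq 2$.
\item When $Y=\{y_1\}$, Lemma~\ref{lem:H-structure} shows that $H$ is co-chordal. A pentagon component would have to contain $y_1$ and leave an isolated vertex of $C$, again producing a $4$-matching.
\item An induced cycle of length at least $4$ in $\overline{G}$ cannot contain $x$, since $x$ has degree one in $\overline{G}$ (its only non-neighbor in $G$ is $y_1$). So the cycle passes through $y$.
\item This forces an induced pentagon through $y$ in $G$. The at least two remaining vertices of $X$ then have $x$ as their only neighbor, contradicting the whisker hypothesis.
\end{itemize}
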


\begin{proof}
Suppose $|V(G)|\geq 8$. We show that $G$ is co-chordal. Set
\begin{align*}
X&=\{u\notin\{x,y\}:ux\in E(G),\ uy\notin E(G)\},\\
Y&=\{u\notin\{x,y\}:uy\in E(G),\ ux\notin E(G)\},\\
Z&=\{u\notin\{x,y\}:ux,uy\in E(G)\},
\end{align*}
and let $H=G[X\cup Y\cup Z]$. By minimality, $X,Y\neq\emptyset$: if either set were empty, one of $x,y$ would dominate $G$ alone. Write $p=|X|$, $q=|Y|$, and $r=|Z|$, interchanging $x$ and $y$ if necessary so that $p\geq q$. We have $p+q+r\geq 6$. The graph $H$ is gapfree and $\nu(H)\leq 2$ since a matching of size $3$ in $H$ could be extended by $xy$. Every vertex of $X\cup Y$ has a neighbor in $H$ since no whiskers are attached to $x$ or $y$.

We use the following observation throughout the proof.

\medskip\noindent
\textbf{Matching observation.} If $M$ is a matching of size $2$ in $H$, its uncovered vertices form an independent set contained entirely in $X$ or entirely in $Y$. Indeed, there are at least two uncovered vertices since $|V(H)|\geq 6$. They are independent since an edge between two of them would extend $M$ to a matching of size $3$ in $H$. If uncovered vertices $a\in X$ and $b\in Y$ exist, then $M\cup\{xa,yb\}$ is a matching of size $4$. If an uncovered vertex $z\in Z$ exists, choose another uncovered vertex $u$. Since $\{x,y\}$ dominates $G$, we can match $u$ to one of $x,y$. The vertex $z$ is adjacent to both, so it can be matched to the other. These two edges extend $M$ to a matching of size $4$. Both possibilities are impossible.

\medskip\noindent
\textbf{Case 1: $q,r\geq 2$.}
We first show that $X$ is independent. Suppose $x_1x_2\in E(G)$. An edge of $H$ from $y_i\in Y$ to a vertex outside $\{x_1,x_2\}$ would, together with $x_1x_2$, leave a vertex of $Z$ uncovered. Thus the matching observation and absence of whiskers give $\emptyset\neq N_H(y_i)\subseteq\{x_1,x_2\}$. Choose distinct $y_1,y_2\in Y$, relabeling $x_1,x_2$ so that $x_1y_1\in E(G)$. An edge $x_2y_2$ would give the matching $\{x_1y_1,x_2y_2,xz_1,yz_2\}$ of size $4$. Hence $N_H(y_2)=\{x_1\}$. Interchanging $y_1,y_2$ gives $N_H(y_1)=\{x_1\}$ as well. Since vertices of $Y$ are adjacent to $y$ but not $x$, we have $N_G(y_1)=N_G(y_2)=\{y,x_1\}$, giving duplicates, a contradiction. Thus $X$ is independent.

We next show that there are no $X$--$Y$ edges. Suppose $x_1y_1\in E(G)$ and choose $x_2\in X\setminus\{x_1\}$. If $x_2$ had a neighbor $u\neq y_1$ in $H$, the independence of $X$ would make $\{x_1y_1,x_2u\}$ a matching in $H$. It covers at most one vertex of $Z$, leaving another uncovered since $r\geq 2$. This contradicts the matching observation. Hence $N_H(x_2)=\{y_1\}$ since $x_2$ is not a whisker. Applying the same argument to the edge $x_2y_1$ and the vertex $x_1$ gives $N_H(x_1)=\{y_1\}$. Thus $N_G(x_1)=N_G(x_2)=\{x,y_1\}$, giving duplicates, a contradiction. With no $X$--$Y$ edges, an edge inside $Y$ would form a gap with $xx_1$, so $Y$ is independent as well.

Every vertex of $X\cup Y$ now has a neighbor in $Z$. Choose $x_1z_1\in E(G)$. For $y_i\in Y$, an edge $y_iz_j$ with $j\neq 1$ would give the matching $\{x_1z_1,y_iz_j,xx_2,yy_k\}$ of size $4$ where $x_2\neq x_1$ and $y_k\neq y_i$, a contradiction. Thus $N_G(y_i)=\{y,z_1\}$ for all $y_i\in Y$, again giving duplicates. Therefore this case is not possible.

\medskip\noindent
\textbf{Case 2: $q\geq 2$ and $r\leq 1$.}
Then $p\geq 3$. There is an $X$--$Y$ edge. Otherwise, any edge inside $X$ would form a gap with $yy_1$, so $X$ would be independent. Its vertices would then be whiskers if $r=0$, or would all have open neighborhood $\{x,z\}$ if $Z=\{z\}$, giving duplicate vertices.

Let $B$ be the graph on $X\cup Y$ consisting only of the $X$--$Y$ edges of $G$. Edges of $G$ within $X$ or within $Y$ are omitted, so $B$ is bipartite by construction. We claim that all edges of $B$ share an endpoint. If $x_1y_1,x_2y_2$ were disjoint edges of $B$, the matching observation would force $r=0$ and $q=2$ since an uncovered vertex $x_3\in X$ exists. Thus $p\geq 4$. Set $A=X\setminus\{x_1,x_2\}$. The matching observation shows that $A$ is independent. For $a\in A$, an edge $ax_1$ would give the matching $\{ax_1,x_2y_2,yy_1,xb\}$ of size $4$ for $b\in A\setminus\{a\}$; similarly, $ax_2\notin E(G)$. Hence each $N_H(a)$ is a nonempty subset of $\{y_1,y_2\}$. It cannot be the whole pair since then $a$ and $y$ would have the same open neighborhood $\{x,y_1,y_2\}$, a contradiction. Since vertices of $A$ are not duplicates, $A=\{a,b\}$ with $N_H(a)=\{y_1\}$ and $N_H(b)=\{y_2\}$. Now $x_1x_2\notin E(G)$ since $\{x_1x_2,ay_1,by_2,xy\}$ would be a matching of size $4$. It follows that $N_G(x_1)$ is either $\{x,y_1\}$ or $\{x,y_1,y_2\}$, making $x_1$ a duplicate of $a$ or $y$. This contradiction shows that $B$ has no two disjoint edges. Since $B$ is nonempty and bipartite, all its edges therefore share a common endpoint, proving the claim.

We now consider the location of this common endpoint. Suppose it is $x_1\in X$, and set $A=X\setminus\{x_1\}$. There are no edges from $A$ to $Y$. There are also none to $Z$: if $Z=\{z\}$ and $az\in E(G)$ for $a\in A$, choose $b\in A\setminus\{a\}$ and an edge $x_1y_i\in E(G)$. Then $\{az,x_1y_i,xb,yy_j\}$ is a matching of size $4$ for $j\neq i$. Thus every vertex of $A$ has its neighbors in $H$ contained in $A\cup\{x_1\}$. An edge inside $A$ would form a gap with $yy_1$, so $A$ is independent. Since its vertices are not whiskers, $N_G(a)=\{x,x_1\}$ for every $a\in A$. As $|A|=p-1\geq 2$, this gives duplicate vertices, a contradiction. If the common endpoint lies in $Y$ and $q\geq 3$, the same argument with $X$ and $Y$ interchanged applies: there are at least two other vertices in $Y$ and at least two vertices in $X$.

It remains that $Y=\{y_1,y_2\}$ and every $X$--$Y$ edge is incident to $y_1$. If $y_1y_2\in E(G)$, gapfreeness applied to $y_1y_2$ and $xx_i$ gives $x_iy_1\in E(G)$ for every $x_i\in X$. Moreover, $X$ is independent: if $ab\in E(G)$ with $a,b\in X$, choose $c\in X\setminus\{a,b\}$ and $d\in(X\cup Z)\setminus\{a,b,c\}$. These choices are possible because $p\geq 3$ and $p+r\geq 4$. Then $\{ab,cy_1,xd,yy_2\}$ is a matching of size $4$, a contradiction. Thus every vertex of $X$ has open neighborhood $\{x,y_1\}$, or $\{x,y_1,z\}$ when $Z=\{z\}$. As $p\geq 3$, two are duplicates, a contradiction.

Therefore $y_1y_2\notin E(G)$. Since $y_2$ is not a whisker, $Z=\{z\}$ and $y_2z\in E(G)$. Again $X$ is independent since an edge $ab$ inside $X$ would give the matching $\{ab,y_2z,yy_1,xc\}$ of size $4$ for $c\in X\setminus\{a,b\}$. Choose $a\in X$ with $ay_1\in E(G)$, which exists since there is an $X$--$Y$ edge. For $b\in X\setminus\{a\}$, an edge $bz$ would give the matching $\{ay_1,bz,xc,yy_2\}$ of size $4$ for $c\in X\setminus\{a,b\}$. Thus every vertex of $X\setminus\{a\}$ has open neighborhood $\{x,y_1\}$. There are at least two such vertices, a contradiction.

\medskip
\noindent
\textbf{Case 3: $q=1$.}
Write $Y=\{y_1\}$. We first show that $H$ is co-chordal. Otherwise, \Cref{lem:H-structure} gives a pentagon $C$ and isolated vertices. Since $|V(H)|\geq 6$, there is an isolated vertex $z$, which must lie in $Z$ because every vertex of $X\cup Y$ has a neighbor in $H$. Also, $y_1\in V(C)$ for the same reason. A perfect matching of $C-y_1$, together with $yy_1$ and $xz$, is a matching of size $4$, a contradiction. Thus $H$ is co-chordal.

Suppose that $G$ is not co-chordal. Let $C$ be an induced cycle of length at least $4$ in $\overline G$. The vertex $x$ is not on $C$ since its only neighbor in $\overline G$ is $y_1$. The vertex $y$ must therefore lie on $C$ since otherwise $C$ would lie in the chordal graph $\overline H$. Write $C=y,a,w_1,\ldots,w_s,b,y$. Since the neighbors of $y$ in $\overline G$ are exactly the vertices of $X$, we have $a,b\in X$. No $w_i$ lies in $X$ since then $yw_i$ would be a chord of $C$. Thus $w_1,\ldots,w_s\in Y\cup Z$. Consecutive pairs on $C$ are nonedges of $G$, while all nonconsecutive pairs are edges of $G$. If $s=1$, then $ab$ and $yw_1$ form a gap. So $s\geq 2$.

The edges $aw_s$ and $bw_1$ now form a matching $M$ of size $2$ in $H$. There are at least two uncovered vertices, and the matching observation places them all in $X$ since $|Y|=1$. Hence every vertex of $Y\cup Z$ is covered by $M$ and must be one of $w_1,w_s$. If $s\geq 3$, however, $w_2\in Y\cup Z$ would be uncovered. Thus $s=2$ and $Y\cup Z=\{w_1,w_2\}$. The remaining vertices of $H$ are exactly $A=X\setminus\{a,b\}$. By the matching observation, $A$ is independent, and $|A|=|V(H)|-4\geq 2$.

The five vertices of $C$ induce a pentagon $P$ in $G$, with cyclic order $y,w_1,b,a,w_2,y$. Choose distinct $c,d\in A$. Suppose that $c$ has a neighbor $v$ on $P$. Since $d\in X$, we have $xd\in E(G)$. A perfect matching of $P-v$, together with $cv$ and $xd$, is a matching of size $4$, a contradiction. Thus $c$ has no neighbor on $P$. It also has no neighbor in $A$ since $A$ is independent. As $V(G)=\{x\}\cup V(P)\cup A$ and $c\in X$, it follows that $N_G(c)=\{x\}$. Thus $c$ is a whisker attached to $x$, a contradiction. This proves that $G$ is co-chordal.
\end{proof}

\begin{lemma}\label{lem:K1-reduction}
Suppose that $x$ dominates $G$ and that no whiskers are attached to $x$. Then either $|V(G)|\leq 7$ or $G$ is co-chordal.
\end{lemma}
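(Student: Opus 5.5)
Set $H=G-x$. Since $x$ dominates $G$, every vertex of $H$ is adjacent to $x$, and $H$ is gapfree as an induced subgraph of $G$. A matching of size $3$ in $H$ together with an edge $xv$, for $v$ uncovered, would give a matching of size $4$ in $G$ whenever $|V(H)|\geq 7$. So if $|V(G)|\geq 8$, then $\nu(H)\leq 2$. The hypothesis that no whiskers are attached to $x$ says exactly that $H$ has no isolated vertices. Assume $|V(G)|\geq 8$, so $|V(H)|\geq 7$. I would show that $G$ is co-chordal.

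\textbf{Step 1.} Apply \Cref{lem:H-structure} to $H$. If $H$ is not co-chordal, its unique nontrivial component is a pentagon and all other components are isolated vertices. Since $H$ has no isolated vertices, $H$ would be a pentagon with only $5<7$ vertices, a contradiction. Hence $H$ is co-chordal, that is, $\overline H$ is chordal.

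\textbf{Step 2.} Pass from $H$ to $G$. In $\overline G$ the vertex $x$ is isolated, since it is adjacent in $G$ to every other vertex. Thus $\overline G$ is the disjoint union of $\overline H$ and the isolated vertex $x$. A disjoint union of a chordal graph with an isolated vertex is chordal, because any induced cycle of length at least $4$ would have to lie in $\overline H$. Therefore $\overline G$ is chordal and $G$ is co-chordal by Fr\"oberg's characterization.

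The only delicate point is the matching bound $\nu(H)\leq 2$. It needs an uncovered vertex of $H$ after removing a $3$-matching, which holds because $|V(H)|\geq 7>6$. I expect no real obstacle. In fact the argument shows $\nu(H)\leq 2$ as soon as $|V(H)|\geq 7$, and this is exactly why the threshold $|V(G)|\leq 7$ appears in the statement.
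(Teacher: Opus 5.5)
Your proof is correct and follows the paper's argument essentially step for step. You delete $x$, use $|V(H)|\geq 7$ to force $\nu(H)\leq 2$, apply \Cref{lem:H-structure} and rule out the pentagon by the vertex count, and then note that $\overline G$ is $\overline H$ together with the isolated vertex $x$ (the appeal to Fr\"oberg at the end is unnecessary, since co-chordality is defined by chordality of the complement, but it is harmless).
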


\begin{proof}
Suppose $|V(G)|\geq 8$ and set $H=G-x$. The graph $H$ is gapfree and has no isolated vertices since such a vertex would be a whisker attached to $x$ in $G$. Since $|V(H)|\geq 7$, a matching of size $3$ in $H$ would leave an uncovered vertex $u$. Adjoining $xu$ would then give a matching of size $4$ in $G$. Thus $\nu(H)\leq 2$. By \Cref{lem:H-structure}, if $H$ were not co-chordal, it would be a pentagon, contradicting $|V(H)|\geq 7$. Hence $H$ is co-chordal. Since $\overline G$ is obtained from $\overline H$ by adjoining the isolated vertex $x$, $G$ is co-chordal as well.
\end{proof}

\begin{lemma}\label{lem.26}
To prove \Cref{t:MAINtheorem}, it suffices to verify that $I(F)^s$ has linear quotients for every $s\geq 2$ for each of the $26$ graphs enumerated in Appendix \ref{app:code} and displayed in Appendix \ref{app:graphs}.
\end{lemma}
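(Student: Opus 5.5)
We argue by minimal counterexample. Suppose \Cref{t:MAINtheorem} fails, and choose a gapfree graph $G$ with $\nu(G)=3$ for which some $I(G)^q$, $q\geq 2$, lacks linear quotients, taking $|V(G)|+|E(G)|$ as small as possible. Isolated vertices change neither the edge ideal's generators nor its colon ideals, so we may assume $G$ is connected. If $G$ is co-chordal, then \cite{HHZ} gives linear quotients for every power, so $G$ is not co-chordal.

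The first step is to show that $G$ has no duplicate vertices, no parallel vertices and no whiskers.
\begin{itemize}
\item If $x,y$ are duplicates, then $G=(G-y)^x$. Here $G-y$ is an induced subgraph, so it is gapfree. Removing $y$ drops the matching number by at most one, and a matching using $y$ can be rerouted through $x$ whenever $x$ is unused, so $\nu(G-y)=3$. Minimality then gives linear quotients for $I(G-y)^q$ for all $q\ge2$, and \Cref{duplication} transfers this to $G$. This contradicts the choice of $G$.
\item If $x,y$ are parallel, then $G=(G-y)^{[x]}$. This graph is gapfree because it is $G$ itself, so \Cref{expansion} applies in the same way.
\item If $x$ is a whisker attached to $y$, set $H=G-x$. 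Then $G$ is obtained from $H$ by attaching the leaf $x$ at $y$, and \Cref{Nursel_Susan} applies provided $I(H)^q$ has linear quotients for all $q\ge2$.
\end{itemize}

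Removing a vertex may drop the matching number to $2$, for instance for a whisker. In that case $H$ is gapfree with $\nu(H)\le 2$. By \Cref{lem:H-structure}, $H$ is then co-chordal or a pentagon plus isolated vertices. The pentagon case is covered by \cite[Proposition~4.2]{EFHHKM} and the co-chordal case by \cite{HHZ}, so the needed hypothesis still holds. The case $\nu=1$ is co-chordal as explained in the introduction.

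The second step bounds the number of vertices. With the reductions above, $G$ has no duplicate vertices, parallel vertices or whiskers and is not co-chordal, and we claim $|V(G)|\le 7$. By \cite[Theorem~5]{EZE}, $G$ is dominated by a clique or by an induced $P_3$.
\begin{itemize}
\item If no clique dominates $G$, then \Cref{lem.P3Domination} applies.
\item Otherwise, take a dominating clique that is minimal as a dominating set. Any subset of a clique is a clique, so minimal dominating cliques exist. By \Cref{lem.cliqueDomination} such a clique has at most $3$ vertices.
\item With $3$ vertices, \Cref{lem.triangleDomination} gives $|V(G)|\le7$.
\item With $2$ vertices, \Cref{lem:K2-reduction} gives $|V(G)|\le 7$ or co-chordality, and the latter is excluded.
\item With $1$ vertex, \Cref{lem:K1-reduction} gives the same dichotomy.
\end{itemize}

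The third step identifies $G$ with one of the listed graphs. $G$ is now a connected, gapfree, non-co-chordal graph with $\nu(G)=3$ on at most seven vertices, with no duplicate vertices, parallel vertices or whiskers. The exhaustive enumeration in Appendix \ref{app:code} lists exactly such graphs, namely the $26$ graphs of Appendix \ref{app:graphs}. So $G$ is isomorphic to one of them, and the hypothesis of the lemma gives the contradiction.

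The main obstacle I expect is the bookkeeping in the first step: each reduction must stay inside the class on which induction runs. Deleting a vertex preserves gapfreeness, but it may lower $\nu$ or create new whiskers or duplicates. The clean way around this is to run the induction on all gapfree graphs with $\nu\le 3$, with the cases $\nu\le2$ settled as above, rather than only on those with $\nu=3$. One must also check that the appendix enumeration uses exactly the filter conditions produced by the reductions. This includes $\nu=3$, because a candidate with $\nu\le 2$ is already handled.
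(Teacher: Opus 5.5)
Your proposal is correct and is essentially the paper's proof: your minimal counterexample is the paper's induction on $|V(G)|$ in contrapositive form. You remove duplicates, parallel vertices and whiskers using Lemma \ref{duplication}, Lemma \ref{expansion}, Corollary \ref{Nursel_Susan} and Lemma \ref{lem:H-structure}, bound $|V(G)|\le 7$ by the domination case analysis, and finish with the enumeration. One harmless slip: deleting one of two duplicate vertices can lower $\nu$ (e.g.\ in $K_{3,3}$), so your claim $\nu(G-y)=3$ is false in general, but your paragraph on $\nu(H)\le 2$ already covers that case, just as the paper does.
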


\begin{proof}
Suppose $G$ is a gapfree graph with $\nu(G)=3$. We prove this statement  by induction on $|V(G)|$.

If $G$ has a duplicate vertex, a parallel vertex, or a whisker, delete one vertex from the duplicate or parallel pair, or delete the degree one vertex, respectively, to obtain $H$. The graph $H$ is gapfree. Deleting one vertex decreases the matching number by at most one. So $\nu(H)\in\{2,3\}$. If $\nu(H)=3$, the induction hypothesis applies. If $\nu(H)=2$, \Cref{lem:H-structure} shows that $H$ is co-chordal or consists of a pentagon and isolated vertices. In either case, all $I(H)^s$ with $s\geq 2$ have linear quotients by \cite[Theorem~10.1.9]{HHgtm260} and \cite[Proposition~4.2]{EFHHKM}, respectively. Reintroducing the deleted vertex preserves this property by \Cref{duplication,expansion,t:add edge all powers}, respectively. Thus we may assume that $G$ is reduced, meaning that it has no duplicate or parallel vertices and no whiskers.

If no clique dominates $G$, \Cref{lem.P3Domination} gives $|V(G)|\leq 7$. Otherwise, choose a minimal dominating clique. Its cardinality is at most $3$ by \Cref{lem.cliqueDomination}. The cases of cardinalities $3$, $2$, and $1$ are handled by \Cref{lem.triangleDomination,lem:K2-reduction,lem:K1-reduction}, respectively. Hence either $|V(G)|\leq 7$ or $G$ is co-chordal. The co-chordal case is already known since all powers of its edge ideal have linear quotients \cite[Theorem~10.1.9]{HHgtm260}.

It remains to consider reduced, connected, gapfree, non-co-chordal graphs with matching number $3$ on at most $7$ vertices. The enumeration in Appendix \ref{app:code} gives exactly $26$ such graphs up to isomorphism. The verification of the statement for these graphs completes the induction.
\end{proof}


\section{Reducing the finite list of graphs}\label{sec:finite-reduction}

Let $G_1,\ldots,G_{26}$ denote the graphs in Appendix \ref{app:graphs}, numbered as in the enumeration in Appendix \ref{app:code}. We reduce the verification in \cref{lem.26} to $G_{12}$, shown in Figure \ref{fig:exceptional}.

These graphs are reduced with respect to the vertex deletions in the proof of \cref{lem.26}: they have no duplicate vertices, parallel vertices, or whiskers. In particular, none can be obtained from a smaller graph by duplication alone, since the last duplication would leave two vertices with the same open neighborhood. A construction may nevertheless use duplication at an intermediate stage: subsequent edge additions can distinguish the duplicate vertices and yield a reduced graph.

For the pentagon $C_5$, every power $I(C_5)^q$ with $q\geq 2$ has linear quotients by \cite[Proposition~4.2]{EFHHKM}. The same conclusion is known for $G_1$ and $G_2$ \cite[Section~5]{EFHHKM}. The vertex labels for $G_1$ and $G_2$ are those in Appendix \ref{app:graphs}; Figure \ref{fig:pentagon} fixes the labels for the pentagon.

\begin{figure}
\centering
\begin{tikzpicture}[scale=1.5]
\coordinate (z) at (0,1){};
\coordinate (p) at (-1,0.3){};
\coordinate (q) at (1,0.3){};
\coordinate (a) at (-0.7,-0.7){};
\coordinate (b) at (0.7,-0.7){};
\fill(z)circle(0.7mm);
\fill(p)circle(0.7mm);
\fill(q)circle(0.7mm);
\fill(a)circle(0.7mm);
\fill(b)circle(0.7mm);
\draw(z)--(p)--(a)--(b)--(q)--cycle;
\draw(0,1.2)node{{\Large $a$}};
\draw(-1.2,0.3)node{{\Large $b$}};
\draw(1.2,0.3)node{{\Large $e$}};
\draw(-0.7,-0.9)node{{\Large $c$}};
\draw(0.6,-0.9)node{{\Large $d$}};
\end{tikzpicture}
\caption{The pentagon $C_5$.}\label{fig:pentagon}
\end{figure}

Table \ref{tab:graph-reductions} records the construction sequences for the remaining graphs. Each row lists the starting graph and then the operations in their prescribed order. Vertex labels refer to the starting graphs in Appendix \ref{app:graphs}, to the pentagon in Figure \ref{fig:pentagon}, and to the new vertices introduced in the row. The constructions are understood up to isomorphism, and the operations are performed in the stated order, with gapfreeness preserved at each edge addition, duplication,  and expansion. Thus Lemmas \ref{duplication} and \ref{expansion} and Theorem \ref{t:add edge all powers} transfer linear quotients for all powers $q\geq 2$ from the starting graph to the resulting graph.

\begingroup
\small
\renewcommand{\arraystretch}{1.2}
\begin{longtable}{@{}p{0.08\textwidth}p{0.12\textwidth}p{0.72\textwidth}@{}}
\caption{Constructions of the remaining graphs. Operations in each row are performed from left to right.}\label{tab:graph-reductions}\\
\hline
Graph & Start & Ordered operations\\
\hline
\endfirsthead
\hline
Graph & Start & Ordered operations\\
\hline
\endhead
\hline
\endfoot
$G_{3}$ & $G_1$ & Add $\{b,e\}$.\\
$G_{4}$ & $G_2$ & Add $\{a,x\}$.\\
$G_{5}$ & $C_5$ & Duplicate $e$ to create $x$ and $c$ to create $y$; add $\{c,x\}$.\\
$G_{6}$ & $G_1$ & Duplicate $e$ to create $y$; add $\{e,x\}$.\\
$G_{7}$ & $C_5$ & Duplicate $b$ to create $x$ and $d$ to create $y$; add $\{d,x\}$, then $\{b,e\}$.\\
$G_{8}$ & $G_2$ & Duplicate $a$ to create $y$; add $\{a,x\}$.\\
$G_{9}$ & $C_5$ & Duplicate $c$ to create $x$ and $e$ to create $y$; add $\{e,x\}$, $\{a,c\}$, $\{c,x\}$, in this order.\\
$G_{10}$ & $G_1$ & Duplicate $b$ to create $y$; add $\{b,e\}$, then $\{b,x\}$.\\
$G_{11}$ & $G_1$ & Duplicate $d$ to create $y$; add $\{b,y\}$.\\
$G_{13}$ & $G_1$ & Expand $x$ to create $y$; add $\{e,x\}$.\\
$G_{14}$ & $G_2$ & Duplicate $e$ to create $y$; add $\{b,e\}$.\\
$G_{15}$ & $G_2$ & Duplicate $a$ to create $y$; add $\{x,y\}$, then $\{a,d\}$.\\
$G_{16}$ & $G_2$ & Duplicate $e$ to create $y$; add $\{c,y\}$.\\
$G_{17}$ & $C_5$ & Duplicate $b$ twice to create $x$ and $y$; add $\{e,x\}$, $\{d,y\}$, $\{b,d\}$, $\{x,y\}$, in this order.\\
$G_{18}$ & $G_{12}$ & Add $\{b,y\}$.\\
$G_{19}$ & $G_1$ & Expand $x$ to create $y$; add $\{b,x\}$, then $\{e,x\}$.\\
$G_{20}$ & $G_1$ & Expand $x$ to create $y$; add $\{b,x\}$, then $\{e,y\}$.\\
$G_{21}$ & $C_5$ & Duplicate $c$ to create $x$ and $e$ to create $y$; add $\{x,y\}$, $\{e,x\}$, $\{c,y\}$, $\{c,x\}$, $\{e,y\}$, in this order.\\
$G_{22}$ & $G_2$ & Duplicate $x$ to create $y$; add $\{a,x\}$.\\
$G_{23}$ & $G_1$ & Duplicate $c$ to create $y$; add $\{e,y\}$, $\{b,e\}$, $\{c,y\}$, in this order.\\
$G_{24}$ & $G_2$ & Duplicate $a$ to create $y$; add $\{c,y\}$, $\{a,d\}$, $\{a,y\}$, in this order.\\
$G_{25}$ & $G_2$ & Add $\{a,x\}$; duplicate $d$ to create $y$; add $\{a,d\}$, then $\{d,y\}$.\\
$G_{26}$ & $G_{12}$ & Add $\{b,e\}$, $\{a,x\}$, $\{x,y\}$.\\
\end{longtable}
\endgroup
These reductions establish the following lemma.

\begin{lemma}
\label{pending}
In order to prove Theorem \ref{t:MAINtheorem}, it suffices to verify the statement for the graph $G_{12}$ in Figure \ref{fig:exceptional}.
\end{lemma}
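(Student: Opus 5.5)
By \cref{lem.26}, it suffices to show that $I(F)^s$ has linear quotients for every $s\geq 2$ and each of the $26$ graphs $G_1,\ldots,G_{26}$. The plan is to sort these graphs into three groups. The first group is the base cases $G_1$ and $G_2$, handled by \cite[Section~5]{EFHHKM}. The second group is the exceptional graph $G_{12}$, which is left to be verified. The third group is every other $G_i$, which should inherit the property from a base graph through the operations listed in Table~\ref{tab:graph-reductions}. The base graphs for the table are $C_5$ \cite[Proposition~4.2]{EFHHKM}, $G_1$, $G_2$, and also $G_{12}$ itself, which is the starting graph for $G_{18}$ and $G_{26}$.

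The argument for each row of the table is an induction along its sequence of operations. Assume that all powers $I(\cdot)^q$ with $q\geq 2$ have linear quotients for the current graph. The operations then carry this property forward as follows.
\begin{itemize}
\item A duplication carries it forward by \cref{duplication}, with no hypothesis needed.
\item An expansion carries it forward by \cref{expansion}, provided the expanded graph is gapfree.
\item Adding an edge $\{u,v\}$ between two existing nonadjacent vertices carries it forward by \cref{t:add edge all powers}, equivalently \cref{Nursel_Susan}, provided the new graph is gapfree. Here $H$ is the current graph and $x,y$ are $u,v$.
\end{itemize}
One point needs care. Both \cref{t:add edge all powers} and \cref{expansion} require the \emph{current} graph $H$ to be gapfree, so every intermediate graph must be gapfree, not only the final one. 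Duplication preserves gapfreeness: a gap in $G^x$ that uses the new vertex $y$ becomes a gap in $G$ once $y$ is replaced by $x$, because $x$ and $y$ are nonadjacent with the same neighbors. Hence only the graphs produced by edge additions and expansions need to be checked, and the table asserts that these are gapfree.

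For each row, the remaining work is to confirm two things. First, the final graph is isomorphic to the listed $G_i$, which is a finite check against Appendix~\ref{app:graphs}. Second, every intermediate graph obtained by an edge addition or an expansion is gapfree, which means checking on at most seven vertices that any two disjoint edges are joined by an edge. I would do both checks mechanically, extending the code in Appendix~\ref{app:code}, rather than by hand.

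Combining the rows shows that every graph other than $G_{12}$ either is $G_1$ or $G_2$, or is built from $C_5$, $G_1$, $G_2$, or $G_{12}$. Therefore proving the statement for $G_{12}$ proves it for all $26$ graphs, and then \cref{lem.26} gives \Cref{t:MAINtheorem}. The main obstacle is not conceptual; it is the bookkeeping of confirming that each intermediate graph is gapfree. An edge addition applied to a graph that has just been duplicated can still leave a gap that involves the duplicated vertex. If some listed sequence fails this check, I would first try reordering its edge additions, and otherwise find a different construction path.
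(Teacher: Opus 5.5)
Your proposal is correct and is essentially the paper's argument. The paper also reduces via \cref{lem.26} and takes $C_5$, $G_1$, $G_2$ as known base cases. It then obtains every remaining graph other than $G_{12}$ from $C_5$, $G_1$, $G_2$, or $G_{12}$ by the sequences in Table~\ref{tab:graph-reductions}, transferring linear quotients with \cref{duplication}, \cref{expansion}, and \cref{t:add edge all powers}. Your observation that duplication preserves gapfreeness, so only graphs produced by edge additions and expansions need a gapfree check, makes explicit what the paper only asserts; note that \cref{expansion} in fact requires only the expanded graph to be gapfree, which that check already covers.
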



\section{Proof of Theorem \ref{t:MAINtheorem}}\label{sec:main-proof}
By \cref{pending}, it remains to show that every power $I(G)^s$ with $s\geq 2$ has linear quotients for the graph $G=G_{12}$ in Figure \ref{fig:exceptional}.

For the ordering below, we write $f=y$ and $g=x$ for the two additional vertices in Figure \ref{fig:exceptional}. The pentagon vertices retain the labels $a,b,c,d,e$.

\begin{notation}\label{not:fig1_order}
Let $G$ be the graph in Figure \ref{fig:exceptional}, relabeled as above, on the vertex set $ V(G)=\{a,b,c,d,e,f,g\} $ with edge set
\begin{align*}
e_1&=ab, & e_2&=ae, & e_3&=af, & e_4&=bc,\\
e_5&=bg, & e_6&=cd, & e_7&=cg, & e_8&=cf,\\
e_9&=de, & e_{10}&=dg, & e_{11}&=df, & e_{12}&=eg.
\end{align*}
Let $I=I(G)$. We order the generators of $I^2$ as follows:
\begin{align*}
M^{(2)}:\quad&
a^2b^2<a^2be<a^2bf<ab^2c<ab^2g<abcf\\
&<a^2e^2<a^2ef<abce<abcd<abcg<abeg\\
&<abde<acef<adef<ade^2<acde<adeg\\
&<a^2f^2<abfg<abdg<abdf<acf^2<acdf\\
&<acfg<adf^2<adfg<aefg<ae^2g<aceg\\
&<b^2c^2<b^2cg<bc^2d<bc^2g<bc^2f<bcdg\\
&<bcde<bcdf<bceg<b^2g^2<bcg^2<bcfg\\
&<bdg^2<bdeg<bdfg<beg^2<c^2df<cd^2f\\
&<cd^2e<c^2d^2<cd^2g<c^2fg<cdfg<cdeg\\
&<c^2dg<cdg^2<c^2f^2<cdef<cdf^2<cefg\\
&<ceg^2<c^2g^2<d^2eg<d^2e^2<d^2ef<de^2g\\
&<d^2fg<d^2g^2<deg^2<d^2f^2<defg<e^2g^2 .
\end{align*}
For $s\geq 3$, define an order $M^{(s)}$ on $\mathcal G(I^s)$ recursively by
\[
M^{(s)}
=
\operatorname{reduced}
\big(
M^{(s-1)}e_1<
M^{(s-1)}e_2<
\cdots<
M^{(s-1)}e_{12}
\big),
\]
where $\operatorname{reduced}$ means that a generator is omitted if it has already appeared in an earlier edge block.

For $s\geq 3$, let $A_i^{(s)}$ denote the $e_i$-block of $M^{(s)}$. Thus, $w\in A_i^{(s)}$ means that $w=e_i \cdot u$ for some $u\in \mathcal G(I^{s-1})$ and that $w$ does not appear in any earlier $e_j$-block with $j<i$. We also set
\[
A_{<i}^{(s)}=\bigcup_{h<i} A_h^{(s)}.
\]

For $w\in \mathcal G(I^s)$, define
\[
\operatorname{set}_s(w)
=
\{x\in V(G): \text{ there exists } w'<w \text{ in } M^{(s)}
\text{ with } w':w=x\}.
\]
Let $w\in A_i^{(s)}$. Define a set of variables $D_i(w)\subseteq \operatorname{set}_s(w)$ such that for every $x\in D_i(w)$, there exists some $w_x\in A_{<i}^{(s)}$ with $w_x:w=x$.

\end{notation}

The first lemma transfers variables in the colon ideals from one power to the next. The following two lemmas handle the comparisons between different blocks, including the exceptional support sets.

\begin{lemma}\label{lem:block_inheritance}
Let $s\geq 3$, and suppose $w=e_i \cdot  u\in A_i^{(s)}$. If $x\in \operatorname{set}_{s-1}(u)$, then $x\in \operatorname{set}_s(w)$.
\end{lemma}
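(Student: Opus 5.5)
The plan is to lift a witness from $M^{(s-1)}$ directly. Since $x\in\operatorname{set}_{s-1}(u)$, there is some $u'<u$ in $M^{(s-1)}$ with $u':u=x$. Set $w'=e_i u'$. This is a product of $s$ edges, so it is a minimal generator of $I^s$ (all generators have degree $2s$). Multiplying both monomials by the same $e_i$ preserves the colon:
\[
w':w=\frac{e_iu'}{\gcd(e_iu',e_iu)}=\frac{u'}{\gcd(u',u)}=u':u=x.
\]
It therefore remains only to show that $w'<w$ in $M^{(s)}$.

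For this I would use the block structure of $M^{(s)}$. The monomial $w'$ appears in some block $A_h^{(s)}$ with $h\le i$. Indeed, it occurs in the unreduced list at position $M^{(s-1)}e_i$, so its first occurrence is in block $e_h$ for some $h\le i$.

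If $h<i$, then $w'<w$ immediately, because every element of $A_h^{(s)}$ precedes every element of $A_i^{(s)}$.

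If $h=i$, then both $w'=e_iu'$ and $w=e_iu$ are retained in the $e_i$-block. Their relative order there is the order of $u'$ and $u$ in $M^{(s-1)}$, since the reduction only deletes entries and never reorders them. There is one subtlety: $w'$ might arise in the $e_i$-block from several different $u''$. However, as a monomial, $e_iu''=w'$ forces $u''=w'/e_i=u'$, so the representative is unique. Hence $u'<u$ gives $w'<w$.

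The main point to check carefully is that the ordering within a block is inherited faithfully from $M^{(s-1)}$, and that $w'$ is not placed after $w$ through its first occurrence. The argument above handles both: any earlier occurrence puts $w'$ in an even earlier block. In either case $x=w':w$ with $w'<w$, so $x\in\operatorname{set}_s(w)$.
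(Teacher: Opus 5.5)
Your proposal is correct and follows essentially the same route as the paper's proof. You lift the witness $u'$ to $e_iu'$, note that the colon $(e_iu'):(e_iu)=u':u=x$ is unchanged, and observe that $e_iu'$ either already appeared in an earlier edge block or precedes $e_iu$ inside the $e_i$-block, which inherits the order of $M^{(s-1)}$; your remarks on the gcd computation and on the uniqueness of $w'/e_i$ just make explicit what the paper leaves implicit.
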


\begin{proof}
Choose $u'<u$ in $M^{(s-1)}$ such that $u':u=x$. Then $e_i \cdot  u'$ appears before $e_i \cdot u$ inside the $e_i$-block, unless $e_i \cdot u'$ has already appeared in an earlier edge block. In either case, $e_i \cdot  u'$ occurs before $w=e_i \cdot  u$ in $M^{(s)}$ and $(e_i \cdot u'):(e_i u)=u':u=x$. Thus $x\in \operatorname{set}_s(w)$.
\end{proof}

\begin{lemma}\label{lem:terminal_support_sets}
For every $s\geq 2$ and every $u\in \mathcal G(I^s)$, the following implications hold:
\[
\begin{aligned}
&\operatorname{supp}(u)\subseteq \{c,d,g\}
\quad\Longrightarrow\quad
e,f\in \operatorname{set}_s(u),\\
&\operatorname{supp}(u)\subseteq \{d,e,f\}
\quad\Longrightarrow\quad
g\in \operatorname{set}_s(u),\\
&\operatorname{supp}(u)\subseteq \{d,g\}
\quad\Longrightarrow\quad
f\in \operatorname{set}_s(u).
\end{aligned}
\]
\end{lemma}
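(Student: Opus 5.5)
The plan is to argue by induction on $s$. The key point is that all three hypotheses are closed under division: if $\operatorname{supp}(u)$ lies in one of the sets $\{c,d,g\}$, $\{d,e,f\}$ or $\{d,g\}$, and $u=e_i\cdot u''$ with $u''\in\mathcal G(I^{s-1})$, then $\operatorname{supp}(u'')$ lies in the same set. The induction step then comes directly from \cref{lem:block_inheritance}, and the real content is the base case $s=2$, which is a finite check against the explicit order $M^{(2)}$.

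\emph{Inductive step.} Let $s\geq 3$ and let $u\in\mathcal G(I^s)$ satisfy one of the three support hypotheses. By the construction of $M^{(s)}$, $u$ lies in some block $A_i^{(s)}$, so $u=e_i\cdot u''$ with $u''\in\mathcal G(I^{s-1})$. Since $\operatorname{supp}(u'')\subseteq\operatorname{supp}(u)$, the same hypothesis holds for $u''$ (note $s-1\geq 2$). The induction hypothesis therefore gives the required variables ($e,f$, or $g$, or $f$) in $\operatorname{set}_{s-1}(u'')$. Applying \cref{lem:block_inheritance} to $w=u=e_i\cdot u''$ transfers each of these variables to $\operatorname{set}_s(u)$. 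No structure of the blocks beyond that lemma is needed.

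\emph{Base case $s=2$.} First list the relevant generators. The edges inside $\{c,d,g\}$ are $cd,cg,dg$, so the generators of $I^2$ supported there are
\[
c^2d^2,\ cd^2g,\ c^2dg,\ cdg^2,\ c^2g^2,\ d^2g^2 .
\]
The edges inside $\{d,e,f\}$ are $de,df$, giving $d^2e^2,\ d^2ef,\ d^2f^2$. Inside $\{d,g\}$ the only generator is $d^2g^2$. For each such $u$ and each required variable $x$, we exhibit $w'=u\,x/v$ for a suitable variable $v\mid u$. Here $w'$ is obtained by replacing one edge factor of $u$ by an edge through $x$, using that $N(e)\supseteq\{d,g\}$, $N(f)\supseteq\{c,d\}$ and $N(g)\ni d$. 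We then check in the displayed list $M^{(2)}$ that $w'<u$. For example:
\begin{itemize}
\item for $c^2d^2$, use $cd^2e$ (giving $e$) and $c^2df$ (giving $f$), both earlier;
\item for $d^2e^2$, $d^2ef$ and $d^2f^2$, use $d^2eg$, $d^2eg$ and $d^2fg$ respectively, all earlier;
\item for $d^2g^2$, use $d^2fg$ (giving $f$) and $d^2eg$ (giving $e$), both earlier.
\end{itemize}
The remaining monomials $cd^2g$, $c^2dg$, $cdg^2$, $c^2g^2$ are handled the same way, with candidates such as $cdeg$, $cdfg$, $ceg^2$, $c^2fg$, $cefg$ and $cd^2f$ located in $M^{(2)}$.

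The main obstacle is this bookkeeping: for each of the roughly ten pairs $(u,x)$ we must find at least one candidate $w'$ with $w':u=x$ that actually precedes $u$ in $M^{(2)}$. Where the first natural swap lands after $u$ in the order, we will try the alternative replacement, removing the other endpoint of the factor, before concluding anything.
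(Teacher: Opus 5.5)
Your proposal is correct and is essentially the paper's proof. Both argue by induction on $s$: write $u=e_i\cdot u''$ with $\operatorname{supp}(u'')\subseteq\operatorname{supp}(u)$, lift the witnesses with Lemma~\ref{lem:block_inheritance}, and settle $s=2$ by a direct check of $M^{(2)}$.

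When you finish the base-case bookkeeping, three corrections to your candidate list:
\begin{itemize}
\item For $cd^2g$ you must use $cd^2e$ and $cd^2f$, because $cdeg$ comes after $cd^2g$ in $M^{(2)}$.
\item $cefg$ can never be a witness, since its quotient by any $u$ supported on $\{c,d,g\}$ is divisible by $ef$.
\item The remaining pairs are settled by $cdeg, c^2df$ for $c^2dg$, by $cdeg, cdfg$ for $cdg^2$, and by $ceg^2, c^2fg$ for $c^2g^2$.
\end{itemize}
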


\begin{proof}
For $s=2$, this follows directly from the order $M^{(2)}$. For the generators supported on $\{c,d,g\}$, the following earlier witnesses occur in $M^{(2)}$:
\[
\begin{array}{c|cc}
u & \text{$e$-witness} & \text{$f$-witness}\\
\hline
c^2d^2 & cd^2e & c^2df\\
cd^2g & cd^2e & cd^2f\\
c^2dg & cdeg & c^2df\\
cdg^2 & cdeg & cdfg\\
c^2g^2 & ceg^2 & c^2fg\\
d^2g^2 & d^2eg & d^2fg .
\end{array}
\]
Similarly,
\[
d^2eg:d^2e^2=g,\qquad
d^2eg:d^2ef=g,\qquad
d^2fg:d^2f^2=g,
\]
and $d^2fg:d^2g^2=f$. Thus the three implications hold for $s=2$.

Now assume $s\geq 3$. We prove the first implication; the other two are identical. Suppose $\operatorname{supp}(u)\subseteq \{c,d,g\}$. Since the only edges of $G$ supported on $\{c,d,g\}$ are $cd,cg,dg$, the first block in which $u$ appears is one of the $cd$-, $cg$- or $dg$-blocks. Thus $u=e_i \cdot  u'$ with $e_i\in \{cd,cg,dg\}$ and $\operatorname{supp}(u')\subseteq \{c,d,g\}$. By induction on $s$, we have $e,f\in \operatorname{set}_{s-1}(u')$. Then Lemma \ref{lem:block_inheritance} implies $e,f\in \operatorname{set}_s(u)$.

The proofs of the second and third implications are the same, using the edge sets $\{de,df\}$ and $\{dg\}$, respectively.
\end{proof}

\begin{lemma}\label{lem:block_check_compact}
Let $s\geq 3$, let $w\in A_i^{(s)}$, and let $v\in A_{<i}^{(s)}$. For each $i$, the following statements hold.

\begin{enumerate}
    \item If $i=1$, then there is no such $v$.

    \item We have
\begin{align*}
 D_2(w)&=
\begin{cases}
\{b\},& b\nmid w,\\
\{a,b,d,f,g\},& b\mid w
\end{cases},\\
    D_3(w)&=
\begin{cases}
\{b,e\},&
g\nmid w\text{ or }(b\nmid w\text{ and }e\nmid w),\\
\{a,b,c,d,e\},&
g\mid w\text{ and }(b\mid w\text{ or }e\mid w)
\end{cases},\\
    D_4(w) &=\{a\},\\ D_5(w) & =
\begin{cases}
\{a,c\},& c\nmid w,\\
\{a,b,c,d,e\},& c\mid w
\end{cases},\\
    D_8(w)&=
\begin{cases}
\{a,b,d,g\},& d\nmid w,\ e\nmid w,\ g\nmid w,\\
\{a,b,c,d,g\},& d\mid w\text{ or }e\mid w\text{ or }g\mid w
\end{cases},\\
    D_{11}(w)&=
\begin{cases}
\{a,c,e,g\},& e\nmid w\text{ and }g\nmid w,\\
\{a,b,c,d,e,g\},& e\mid w\text{ or }g\mid w
\end{cases},\\
    D_{12}(w)&=\{a,b,c,d\},
    \end{align*}
    and $v:w\in (D_i(w))$ for $i=2,3,4,5,8,11, 12$.

    \item We have
\begin{align*}
D_6(w)&=
\begin{cases}
\{b\},& e\nmid w\text{ and }f\nmid w,\\
\{a,b\},& e\mid w\text{ or }f\mid w,
\end{cases}\\
    D_7(w)&= \{b,d\} \cup
\begin{cases}
\{a\},& e\mid w\text{ or }f\mid w,\\
\varnothing,& e\nmid w\text{ and }f\nmid w
\end{cases}
    \cup
\begin{cases}
\{c,e\},& d\mid w,\\
\varnothing,& d\nmid w.
\end{cases}
    \end{align*}
    Moreover, for $i=6,7$, either $v:w\in (D_i(w))$ or
\[
\operatorname{supp}(w)\subseteq \{c,d,g\}
\quad\text{and}\quad
e\mid(v:w)\text{ or }f\mid(v:w).
\]

    \item If $i=9$, then
\[
D_9(w)=
\begin{cases}
\{a,c\},& g\nmid w,\\
\{a,b,c\},& g\mid w
\end{cases}
 .
\]
    Moreover, either $v:w\in (D_9(w))$ or
\[
\operatorname{supp}(w)\subseteq \{d,e,f\}
\quad\text{and}\quad
g\mid(v:w).
\]

    \item If $i=10$, then
\[
D_{10}(w)=
\{b,c,e\}
\cup
\begin{cases}
\{a\},& e\mid w\text{ or }f\mid w,\\
\varnothing,& e\nmid w\text{ and }f\nmid w
\end{cases}
\cup
\begin{cases}
\{d\},& e\mid w,\\
\varnothing,& e\nmid w
\end{cases}
 .
\]
    Moreover, either $v:w\in (D_{10}(w))$ or
\[
\operatorname{supp}(w)\subseteq \{d,g\}
\quad\text{and}\quad
f\mid(v:w).
\]
\end{enumerate}
\end{lemma}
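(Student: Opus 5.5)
The proof splits into two tasks for each $i$: exhibit the variables in $D_i(w)$, and show that every earlier-block colon $v:w$ is divisible by one of them unless the stated exceptional support condition holds. Throughout, write $w=e_iu$ with $u\in\mathcal G(I^{s-1})$; since $w\in A_i^{(s)}$, no factorization of $w$ contains an edge $e_h$ with $h<i$. This ``no earlier edge'' property drives everything. For (1), every earlier block is empty when $i=1$.

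\emph{Witnesses for $D_i(w)$.} For each variable $x$ listed in $D_i(w)$, I will construct $w_x\in A^{(s)}_{<i}$ with $w_x:w=x$ by a single edge swap. Write $e_i=pq$. If $x$ is adjacent to $p$ and $e_h=xp$ has $h<i$, set $w_x=w\cdot x/q=(xp)u$. Then $w_x$ lies in a block of index $\le h<i$, and $w_x:w=x$ when $x\ne q$. For example, with $i=2$ and $e_2=ae$, the swap $ae\to ab$ gives $b$.

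The conditional entries use two-edge swaps. Suppose $w=e_i\,(rt)\,u''$ with $rt$ an edge, and $(pr)(qt')$ or similar is a pair of edges one of which has smaller index. Replacing $e_i\cdot rt$ by such a pair gives colon a single variable. For instance, when $i=2$ and $b\mid w$, some edge of $u$ contains $b$, say $bc$ or $bg$. Rewriting $ae\cdot bc$ or $ae\cdot bg$ against $ab$ times an edge at $e$, namely $ed$ or $eg$, produces the variables $d$ and $g$, together with $a$ and $f$ via $af$. The hypotheses in each case distinction (``$b\mid w$'', ``$g\mid w$ and $(b\mid w$ or $e\mid w)$'', and so on) are exactly the conditions that let such a second edge be found in $u$. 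I will verify these swaps block by block from the edge list $e_1,\dots,e_{12}$. It is routine but lengthy.

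\emph{Colons are covered.} Now take $v\in A_{<i}^{(s)}$, so that $v$ has a factorization containing some $e_h$ with $h<i$. I will show $v:w$ is divisible by an element of $D_i(w)$. Write $v=e_h v'$. Then $v:w$ contains the part of $e_h$ not already in $w$, together with excess variables from $v'$. Since $w$ contains no earlier edge, $e_h$ cannot divide $w$ as an edge. I will argue that some endpoint of $e_h$, or some excess variable, lies in $D_i(w)$. The key tool is a gapfree-type observation in $G$. Any variable of $v:w$ that is not in $D_i(w)$ must avoid the closed structure around $e_i$ determined by the listed neighbors. Comparing degrees as in $(\ast)$ from the proof of Theorem \ref{t:add edge all powers} then forces $\operatorname{supp}(w)$ into a small set.

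Concretely, for each $i$ I will list the variables outside $D_i(w)$, in the relevant subcase. I then check that a monomial $v$ of the same degree, containing an earlier edge, whose colon avoids $D_i(w)$, forces $w$ to use only edges among the later edges on those variables. For $i=6,7$ the complement of $D_i$ includes $e,f$, and the only way to avoid $D_i$ is $\operatorname{supp}(w)\subseteq\{c,d,g\}$ with $e$ or $f$ in the colon. For $i=9$ this becomes support in $\{d,e,f\}$ with $g$ in the colon, and for $i=10$ support in $\{d,g\}$ with $f$ in the colon. These exceptional cases are deferred, to be resolved afterwards by Lemma \ref{lem:terminal_support_sets}.

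The main obstacle is this second step: showing exhaustively that the colon always meets $D_i(w)$ outside the exceptional supports, with $w$ ranging over arbitrary powers. I would first try to reduce to $s=3$ by a common-edge cancellation. If $v$ and $w$ share an edge $e$ in factorizations, then $v/e$ and $w/e$ satisfy the same hypotheses in degree $s-1$, giving an induction, with $s=3$ verified by a finite (computer-checkable) enumeration. Otherwise $v$ and $w$ share no edge, which bounds the configurations enough to do the check by hand. Lemma \ref{lem:block_inheritance} handles the transfer of within-block variables.
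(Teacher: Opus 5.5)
\textbf{Gap: the covering step is not carried out, and the proposed reduction to $s=3$ does not work.}

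Your first step (single and double edge swaps producing the witnesses in $D_i(w)$) matches the paper's. One correction: the case conditions work together with the block condition, not divisibility alone. For $i=2$ and $b\mid w$, the $b$ cannot come from $bg$, since $ae\cdot bg=ab\cdot eg$ would put $w$ in the $ab$-block. So $w=ae\cdot bc\cdot m'$, and the $f$-witness is $ab\cdot cf\cdot m'$, not something ``via $af$''.

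The genuine gap is the second step, which is the real content of the lemma. The paragraph about a ``gapfree-type observation'' and $(\ast)$ is not an argument, and the reduction you propose fails as stated:
\begin{enumerate}
\item Pairs with no common edge are not bounded in size. For example, $w=(de)^s\in A_9^{(s)}$ and $v=(ab)^s\in A_1^{(s)}$ share no edge for every $s$. So the ``no common edge'' case still needs an argument valid for all $s$. An exchange argument as in Theorem~\ref{t:add edge all powers} would require showing that the modified $v$ stays in $A_{<i}^{(s)}$ and that its colon divides $v:w$; none of this is set up.
\item The cancellation step does not give ``the same hypotheses in degree $s-1$''. If the shared edge is $e_i$ itself and occurs once in $w$, then $w/e_i$ can lie in a later block. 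For instance, $w=ae\cdot(dg)^{s-1}$ is in the $ae$-block, but $w/ae=(dg)^{s-1}$ is in the $dg$-block. The induction then gives information about $D_{10}=\{b,c,e\}$, not $D_2(w)=\{b\}$.
\item An exceptional alternative for $(v/e,w/e)$ does not yield the one for $(v,w)$, since $\operatorname{supp}(w)$ may be strictly larger than $\operatorname{supp}(w/e)$.
\end{enumerate}

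\textbf{The missing idea: a direct argument, uniform in $s$, needing no base case or computer check.}

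First, a vertex all of whose edges precede $e_i$ cannot divide $w$: for example, $a,b\nmid w$ for $i\ge 6$, and $a,b,c\nmid w$ for $i\ge 9$. Any variable dividing $v$ but not $w$ divides $v:w$. So reading through the earlier blocks one by one settles most branches. It also isolates exactly the exceptional supports $\{c,d,g\}$, $\{d,e,f\}$ and $\{d,g\}$.

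Second, where the relevant endpoint of the earlier edge may divide $w$, the paper uses a counting argument. Take $i=2$ with $b\mid w$, and write $v=ab\cdot n$ and $w=ab\cdot(ce)m'$.
\begin{enumerate}
\item The block condition gives $(ce)m'\notin I^{s-1}$, so no edge of $m'$ lies inside $D_2(w)=\{a,b,d,f,g\}$.
\item $D_2(w)$ is a vertex cover, since $ce$ is not an edge. Hence $(ce)m'$ contains exactly $s-2$ variables from $D_2(w)$, counted with multiplicity.
\item The monomial $n$ contains at least $s-1$ such variables, so one of them survives in $v:w=n:(ce)m'$.
\end{enumerate}
The case $i=10$ with $e\mid w$ is handled by a similar exponent inequality, comparing $(de)^t(dg)^P(df)^R(eg)^Q$ with $(dg)^p(df)^r(eg)^q$.
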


\begin{proof}
The case $i=1$ is immediate. We use the phrase ``$v$ comes from the $xy$-block'' to mean that $v\in A_h^{(s)}$ with $e_h=xy$.

We first consider the cases in (2). For $i=2$, write $w=ae\cdot m$ where $m\in \mathcal G(I^{s-1})$. Since the $ab$-block precedes the $ae$-block, the generator $ab\cdot m$ occurs before $w$, and $(ab\cdot m):w=b$. Hence $b\in\operatorname{set}_s(w)$. If $b\mid w$, then the $b$-factor cannot come from $ab$, because $w\notin A_1^{(s)}$, and it cannot come from $bg$ because $ae\cdot bg=ab\cdot eg$ would again put $w$ in the $ab$-block. Thus $w=ae\cdot bc\cdot m'$. The earlier generators $ab\cdot ae\cdot m'$, $ab\cdot de\cdot m'$, $ab\cdot cf\cdot m'$, and $ab\cdot eg\cdot m'$ give quotients $a,d,f,g$, respectively. Therefore the displayed formula for $D_2(w)$ follows.

Now let $v\in A_{<2}^{(s)}$, so $v=ab\cdot n$. If $b\nmid w$, then $b\mid(v:w)$. Hence $v:w\in(D_2(w))$. Suppose $b\mid w$, so $w=ae\cdot bc\cdot m'$ and $D_2(w)=\{a,b,d,f,g\}$. Since $w\notin A_1^{(s)}$, we have $(ce) m'\notin I^{s-1}$. Thus $m'$ contains no edge entirely inside $D_2(w)$; otherwise, using $(ce) ab=ae\cdot bc$, $(ce) af=ae\cdot cf$, $(ce) bg=bc\cdot eg$, $(ce) dg=cd\cdot eg$ or $(ce) df=cf\cdot de$, we would get $(ce) m'\in I^{s-1}$, a contradiction. Hence every edge appearing in $m'$ has exactly one endpoint in $D_2(w)$. Since $D_2(w)$ is a vertex cover of $G$, every edge appearing in $n$ has at least one endpoint in $D_2(w)$. Comparing the number of $D_2(w)$-variables in $n$ and in $(ce) m'$, and using $(ab\cdot n):(ab\cdot (ce) m')=n:((ce) m')$, one variable from $D_2(w)$ remains in the quotient. Thus $v:w\in(D_2(w))$.

The cases $i=3,4,5,8,11,12$ are checked in the same way. The basic witnesses are as follows: for $i=3$, the $ab$- and $ae$-blocks give $b$ and $e$; for $i=4$, the $ab$-block gives $a$; for $i=5$, the $ab$- and $bc$-blocks give $a$ and $c$; for $i=8$, the $af$-, $bc$-, $cd$-, and $cg$-blocks give $a,b,d,g$; for $i=11$, the $af$-, $cf$-, $de$-, and $dg$-blocks give $a,c,e,g$; and for $i=12$, the $ae$-, $bg$-, $cg$-, and $de$-blocks give $a,b,c,d$. Whenever one of the exceptional variables in the displayed branch of $D_i(w)$ divides $w$, the block condition forces that variable to occur through a later edge, and replacing that later edge by the corresponding earlier edge gives the additional variables listed in $D_i(w)$. Reading through the earlier blocks then shows that every $v:w$ is divisible by one of the variables in the displayed $D_i(w)$. This proves (2).

We now prove (3). For $i=6$, write $w=cd\cdot m$. Since every edge containing $a$ or $b$ occurs before the $cd$-block, we have $a,b\nmid w$. The generator $bc\cdot m$ occurs before $w$, and $(bc\cdot m):w=b$, so $b\in\operatorname{set}_s(w)$. If $e\mid w$ or $f\mid w$, then replacing the edge containing $e$ by $ae$ or the edge containing $f$ by $af$, gives an earlier generator whose quotient by $w$ is $a$. Hence the displayed formula for $D_6(w)$ follows.

For the colon statement when $i=6$, if $e\mid w$ or $f\mid w$, then $D_6(w)=\{a,b\}$. Every earlier block is one of $ab,ae,af,bc,bg$. So every $v\in A_{<6}^{(s)}$ is divisible by $a$ or $b$. Since $a,b\nmid w$, we get $v:w\in(a,b)=(D_6(w))$. If $e,f\nmid w$, then $D_6(w)=\{b\}$. Since $a,b,e,f\nmid w$, we have $\operatorname{supp}(w)\subseteq\{c,d,g\}$. If $v:w\notin(b)$, then $v$ cannot come from an earlier block whose edge contains $b$. The only remaining earlier blocks are the $ae$- and $af$-blocks, and these give $e\mid(v:w)$ or $f\mid(v:w)$, respectively.

For $i=7$, write $w=cg\cdot m$. Again $a,b\nmid w$. The $bg$- and $cd$-blocks give quotients $b$ and $d$, so $b,d\in\operatorname{set}_s(w)$. If $e\mid w$ or $f\mid w$, replacing the edge containing $e$ by $ae$ or the edge containing $f$ by $af$, gives the quotient $a$. If $d\mid w$, then the $d$-factor cannot come from $cd$, and it also cannot come from $de$, since $cg\cdot de=cd\cdot eg$ would put $w$ in the $cd$-block. Thus the $d$-factor comes from $dg$ or $df$. The earlier $cd$-block replacements $cg\cdot dg\mapsto cd\cdot cg$ and $cg\cdot df\mapsto cd\cdot cf$ give quotient $c$, while $cg\cdot dg\mapsto cd\cdot eg$ and $cg\cdot df\mapsto cd\cdot eg$ give quotient $e$. This gives the displayed formula for $D_7(w)$.

The colon statement for $i=7$ is then read block-by-block. Quotients from the $ab$-, $bc$-, and $bg$-blocks are divisible by $b$. Quotients from the $cd$-block are divisible by $d$, unless $d\mid w$, in which case they are accounted for by $c$ or $e$. Quotients from the $ae$- and $af$-blocks are divisible by $e$ or $f$, unless $e\mid w$ or $f\mid w$, in which case they are accounted for by $a$. Therefore, if $v:w\notin(D_7(w))$, then $e\mid(v:w)$ or $f\mid(v:w)$. In this exceptional situation $e,f\nmid w$, and since also $a,b\nmid w$, we have $\operatorname{supp}(w)\subseteq\{c,d,g\}$. This proves (3).

We prove (4). Suppose $w\in A_9^{(s)}$, and write $w=de\cdot m$. Since every edge containing $a$, $b$ or $c$ occurs before the $de$-block, we have $a,b,c\nmid w$. The earlier generators $ae\cdot m$ and $cd\cdot m$ give quotients $a$ and $c$, respectively. If $g\mid w$, replacing the edge containing $g$ by $bg$ gives an earlier generator whose quotient by $w$ is $b$. Hence the displayed formula for $D_9(w)$ follows.

If $g\mid w$, then $D_9(w)=\{a,b,c\}$. Every earlier block contains one of $a,b,c$. Since none of these variables divides $w$, we have $v:w\in(D_9(w))$. If $g\nmid w$, then $D_9(w)=\{a,c\}$, and $a,b,c,g\nmid w$, so $\operatorname{supp}(w)\subseteq\{d,e,f\}$. If $v:w\notin(a,c)$, then $v$ cannot come from any earlier block except the $bg$-block. Therefore $g\mid(v:w)$, proving the asserted alternative for $i=9$.

Finally we prove (5). Suppose $w\in A_{10}^{(s)}$, and write $w=dg\cdot m$. Since every edge containing $a$, $b$ or $c$ occurs before the $dg$-block, we have $a,b,c\nmid w$. The earlier generators $bg\cdot m$, $cg\cdot m$, and $de\cdot m$ give quotients $b,c,e$, respectively. So $b,c,e\in\operatorname{set}_s(w)$. If $e\mid w$, then the $e$-factor must come from an $eg$-factor; otherwise $w$ would occur in the $ae$- or $de$-block. The quotients $ae\cdot dg:dg\cdot eg=a$ and $de\cdot dg:dg\cdot eg=d$ show that $a,d\in\operatorname{set}_s(w)$. If $f\mid w$, then the $f$-factor must come from a $df$-factor; otherwise $w$ would occur in the $af$- or $cf$-block. The quotient $af\cdot dg:dg\cdot df=a$ shows that $a\in\operatorname{set}_s(w)$. This gives the displayed formula for $D_{10}(w)$.

Now let $v\in A_{<10}^{(s)}$. If $e\nmid w$ and $f\nmid w$, then $D_{10}(w)=\{b,c,e\}$, and since $a,b,c,e,f\nmid w$, we have $\operatorname{supp}(w)\subseteq\{d,g\}$. Quotients from the $ab$-, $bc$-, and $bg$-blocks are divisible by $b$; quotients from the $cd$-, $cg$-, and $cf$-blocks are divisible by $c$; and quotients from the $ae$- and $de$-blocks are divisible by $e$. Thus, if $v:w\notin(D_{10}(w))$, the only remaining possibility is that $v$ comes from the $af$-block, and then $f\mid(v:w)$.

If $e\nmid w$ but $f\mid w$, then $D_{10}(w)=\{a,b,c,e\}$. The same block reading shows that the $ab$-, $bc$-, and $bg$-blocks are caught by $b$, the $cd$-, $cg$-, and $cf$-blocks are caught by $c$, the $ae$- and $de$-blocks are caught by $e$, and the $af$-block is caught by $a$. Hence $v:w\in(D_{10}(w))$.

It remains to consider $e\mid w$ where $D_{10}(w)=\{a,b,c,d,e\}$. The $ab$-, $bc$-, and $bg$-blocks are caught by $b$, the $cd$-, $cg$-, and $cf$-blocks are caught by $c$, and the $ae$- and $af$-blocks are caught by $a$. For the $de$-block, suppose toward a contradiction that neither $d$ nor $e$ divides $v:w$. Since $v:w\notin(D_{10}(w))$ also rules out $a,b,c$, the monomial $v$ is supported on $\{d,e,f,g\}$. We may write $w=(dg)^p(df)^r(eg)^q$ and $v=(de)^t(dg)^P(df)^R(eg)^Q$ where $p,q,t\geq 1$ and $p+r+q=t+P+R+Q=s$. The assumptions $d\nmid(v:w)$ and $e\nmid(v:w)$ give $t+P+R\leq p+r$ and $t+Q\leq q$. The second inequality gives $P+R=s-t-Q\geq s-q=p+r$, while the first gives $P+R\leq p+r-t<p+r$, a contradiction. Thus $d\mid(v:w)$ or $e\mid(v:w)$, so $v:w\in(D_{10}(w))$.

All cases have now been covered.
\end{proof}

We now combine the preceding lemmas to verify linear quotients for the recursively defined orderings.

\begin{lemma}\label{prop:fig1_all_powers_lq}
Let $G$ be the graph in Figure \ref{fig:exceptional} and let $I=I(G)$. Then $I^s$ has linear quotients for all $s\geq 2$.
\end{lemma}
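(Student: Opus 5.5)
We will prove that the order $M^{(s)}$ of \cref{not:fig1_order} is a linear quotients order for every $s\geq 2$, by induction on $s$. For $s=2$ this is a finite check on the $78$ generators of $M^{(2)}$. For each $u_j$ we must exhibit, for every earlier $u_i$, an earlier $u_k$ with $u_k:u_j$ a variable dividing $u_i:u_j$. Equivalently, we compute $\operatorname{set}_2(u_j)$ and verify that every $u_i:u_j$ lies in the ideal it generates. We will treat this as a direct (computer-assisted or tabulated) verification, in the same spirit as the witness table in the proof of \cref{lem:terminal_support_sets}.

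For $s\geq 3$, assume $M^{(s-1)}$ has linear quotients. Fix $w\in A_i^{(s)}$ and an earlier $v<w$. It suffices to show $v:w\in(\operatorname{set}_s(w))$. There are two cases.

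\textbf{Same block.} Suppose $v\in A_i^{(s)}$. Write $v=e_i u'$ and $w=e_i u$ with $u'<u$ in $M^{(s-1)}$, so $v:w=u':u$. By the induction hypothesis, $u':u$ is divisible by some $x\in\operatorname{set}_{s-1}(u)$. \Cref{lem:block_inheritance} then gives $x\in\operatorname{set}_s(w)$. One point needs care: $u'<u$ holds because the order within the $e_i$-block is inherited from $M^{(s-1)}$. Elements $e_iu'$ removed by the reduction already lie in earlier blocks, and the proof of \cref{lem:block_inheritance} covers that.

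\textbf{Earlier block.} Suppose $v\in A_{<i}^{(s)}$. Here we invoke \cref{lem:block_check_compact}. For $i\in\{1,2,3,4,5,8,11,12\}$ it gives $v:w\in(D_i(w))\subseteq(\operatorname{set}_s(w))$ directly. For $i=6,7$, the only bad alternative is $\operatorname{supp}(w)\subseteq\{c,d,g\}$ with $e$ or $f$ dividing $v:w$. The first implication of \cref{lem:terminal_support_sets} puts $e,f\in\operatorname{set}_s(w)$, so again $v:w\in(\operatorname{set}_s(w))$. For $i=9$ the exceptional case has $\operatorname{supp}(w)\subseteq\{d,e,f\}$ and $g\mid v:w$, and the second implication gives $g\in\operatorname{set}_s(w)$. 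For $i=10$ the exceptional case has $\operatorname{supp}(w)\subseteq\{d,g\}$ and $f\mid v:w$, and the third implication gives $f\in\operatorname{set}_s(w)$.

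Combining the two cases, every colon $(u_1,\dots,u_{j-1}):w$ is generated by $\operatorname{set}_s(w)$, which consists of variables. This completes the induction.

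The main obstacle is the base case $s=2$. It is a genuine, error-prone enumeration over all pairs in $M^{(2)}$. It is also the only place where the specific hand-crafted order matters, and the base cases of \cref{lem:terminal_support_sets} depend on it as well. We would organize it by listing $\operatorname{set}_2(u_j)$ together with witnesses for each $u_j$, and then checking each remaining $u_i:u_j$ against it. The inductive step is essentially bookkeeping once \cref{lem:block_check_compact} and \cref{lem:terminal_support_sets} are available.
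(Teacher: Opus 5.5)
Your proposal is correct and follows the paper's proof essentially step for step. Both argue by induction on $s$ with $s=2$ a finite check. Same-block pairs are handled by the induction hypothesis together with Lemma~\ref{lem:block_inheritance}, and earlier-block pairs by Lemma~\ref{lem:block_check_compact}, with the exceptional cases $i=6,7,9,10$ closed by Lemma~\ref{lem:terminal_support_sets}. The one difference is cosmetic: you apply Lemma~\ref{lem:terminal_support_sets} directly to $w$ at level $s$, whereas the paper applies it to $w/e_i$ at level $s-1$ and then lifts with Lemma~\ref{lem:block_inheritance}; both are valid because that lemma holds for every $s\geq 2$.
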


\begin{proof}
We prove that $M^{(s)}$ is a linear quotients ordering for $I^s$ for every $s\geq 2$. The case $s=2$ can be verified by a finite check. Now let $s\geq 3$, and assume that $M^{(s-1)}$ is a linear quotients ordering for $I^{s-1}$.

It is enough to show that whenever $v<w$ are generators in $M^{(s)}$ with $\deg(v:w)>1$, the monomial $v:w$ is divisible by a variable in $\operatorname{set}_s(w)$. Write $w\in A_i^{(s)}$ and $v\in A_j^{(s)}$. Since $v<w$, either $j=i$ or $j<i$.

First suppose $j=i$. Then $v=e_i\cdot v'$ and $w=e_i\cdot w'$ where $v'<w'$ in $M^{(s-1)}$ and $v:w=v':w'$. Since $\deg(v':w')>1$, the induction hypothesis gives a variable $x\in\operatorname{set}_{s-1}(w')$ such that $x\mid(v':w')$. By Lemma \ref{lem:block_inheritance}, this same variable belongs to $\operatorname{set}_s(w)$. Therefore $v:w$ is divisible by a variable in $\operatorname{set}_s(w)$.

Now suppose $j<i$. Let $v\in A_{<i}^{(s)}$. By Lemma \ref{lem:block_check_compact}, either $v:w\in(D_i(w))$ or one of the exceptional alternatives for $i=6,7,9,10$ occurs. If $v:w\in(D_i(w))$, then $v:w$ is divisible by some variable in $D_i(w)$. This is enough because $D_i(w)\subseteq \operatorname{set}_s(w)$.

It remains to handle the exceptional alternatives. If $i=6$ or $i=7$, then $\operatorname{supp}(w)\subseteq\{c,d,g\}$ and $e\mid(v:w)$ or $f\mid(v:w)$. Since $w\in A_i^{(s)}$, we write $w=e_i\cdot w'$ where $e_i=cd$ if $i=6$ and $e_i=cg$ if $i=7$. Then $\operatorname{supp}(w')\subseteq\{c,d,g\}$, so Lemma \ref{lem:terminal_support_sets} gives $e,f\in\operatorname{set}_{s-1}(w')$. Then $e,f\in\operatorname{set}_s(w)$ by Lemma \ref{lem:block_inheritance}. Since $v:w$ is divisible by $e$ or by $f$, the desired conclusion follows.

If $i=9$, then $\operatorname{supp}(w)\subseteq\{d,e,f\}$ and $g\mid(v:w)$. Write $w=de\cdot w'$. Then $\operatorname{supp}(w')\subseteq\{d,e,f\}$. It follows from Lemma \ref{lem:terminal_support_sets} that $g\in\operatorname{set}_{s-1}(w')$. Thus $g\in\operatorname{set}_s(w)$ by Lemma \ref{lem:block_inheritance}. Since $g\mid(v:w)$, the desired conclusion follows.

Finally, if $i=10$, then $\operatorname{supp}(w)\subseteq\{d,g\}$ and $f\mid(v:w)$. Write $w=dg\cdot w'$. Then $\operatorname{supp}(w')\subseteq\{d,g\}$. It follows from  Lemma \ref{lem:terminal_support_sets} that  $f\in\operatorname{set}_{s-1}(w')$. Thus, by Lemma \ref{lem:block_inheritance}, we have $f\in\operatorname{set}_s(w)$. Since $f\mid(v:w)$, the desired conclusion follows. This completes the proof.
\end{proof}


\section{Further directions}\label{sec:future}

The next case following this combinatorial approach is matching number four.

\begin{question}\label{q:matching-four}
If $G$ is gapfree with $\nu(G)=4$, does $I(G)^s$ have a linear resolution for every $s\geq2$? Can one prove linear quotients for all these powers?
\end{question}

More generally, one can ask how a bound on matching number determines a uniform starting power. In \cite{NP}, Nevo-Peeva considered the complement of the icosahedral graph which has matching number $6$ and regularity of its edge ideal is $4$. It also has a nonlinear square and a linear cube. Thus a uniform bound beginning at the cube for matching number at most $6$, if true, would have the earliest possible starting power.

\begin{question}\label{q:uniform-start}
For every gapfree graph $G$ with at least one edge, do the following implications hold?
\begin{enumerate}[(a)]
    \item If $\nu(G)\leq5$, then $I(G)^s$ has a linear resolution for every $s\geq2$.
    \item If $\nu(G)\leq6$, then $I(G)^s$ has a linear resolution for every $s\geq3$.
\end{enumerate}
\end{question}

Our result suggests a broader question.

\begin{question}
What natural combinatorial conditions on a gapfree graph $G$ ensure that $I(G)^s$ has linear quotients for every $s\geq2$?
\end{question}




\section*{Acknowledgments}

This work began during the authors' stay at the American Institute of Mathematics (AIM) as part of the AIM SQuaRE program. The authors thank AIM for its warm hospitality and for providing a stimulating research environment. N.~Erey gratefully acknowledges funding and support from the Fulbright Visiting Scholar Program, under which part of this work was carried out during a research visit to the University of Minnesota. T.~H\`a was partially supported by a Simons Foundation grant. S.~Kara was supported by NSF Grant DMS-2418805.

\appendix
\section{Enumeration and graph catalogue}\label{app:enumeration}

This appendix provides the enumeration used in Lemma~\ref{lem.26}
and the complete list of $26$ graphs used in
Section~\ref{sec:finite-reduction}. We denote these graphs by
$G_1,\ldots,G_{26}$: the subscript is the Python output number.

\subsection{Python enumeration}\label{app:code}

The following Python code gives the $26$ graphs used in Lemma~\ref{lem.26}. It starts with the list of graphs on at most seven vertices provided by NetworkX. We keep the gapfree graphs, apply the vertex deletions from the proof of the lemma, and retain the resulting non-co-chordal graphs with matching number $3$. Each graph is counted only once up to isomorphism. The catalogue uses the same graph numbers, with letter labels on the vertices.

\begingroup
\footnotesize
\begin{verbatim}
import itertools
import os
import networkx as nx
import matplotlib.pyplot as plt
from matplotlib.backends.backend_pdf import PdfPages

def is_gap_free(G):
    """
    G is gapfree iff G has no induced 2K2.
    Equivalently, complement(G) has no induced C4.
    """
    H = nx.complement(G)

    for S in itertools.combinations(H.nodes(), 4):
        sub = H.subgraph(S)

        if sub.number_of_edges() == 4:
            if nx.is_connected(sub) and all(d == 2 for _, d in sub.degree()):
                return False

    return True

def matching_number(G):
    M = nx.max_weight_matching(G, maxcardinality=True)
    return len(M)

def edge_ideal_has_linear_resolution(G):
    """
    By Froberg's theorem, I(G) has a linear resolution
    iff complement(G) is chordal.
    """
    return nx.is_chordal(nx.complement(G))

def twin_vertices_to_remove(G):
    """
    Vertices x and y are twins if

        N(x) \\ {y} = N(y) \\ {x}.

    This includes both adjacent and non-adjacent twins.

    For each twin class, keep one vertex and remove the rest.
    """
    remove = set()
    nodes = sorted(G.nodes())

    for i, x in enumerate(nodes):
        if x in remove:
            continue

        Nx = set(G.neighbors(x))

        for y in nodes[i + 1:]:
            if y in remove:
                continue

            Ny = set(G.neighbors(y))

            if Nx - {y} == Ny - {x}:
                remove.add(y)

    return remove

def reduce_graph(G):
    """
    Repeatedly perform the operations in this order:

    (1) remove leaves;
    (2) remove one of the two twin vertices;
    (3) remove isolated vertices.

    Continue until the graph no longer changes.
    """
    H = nx.convert_node_labels_to_integers(G.copy())

    changed = True

    while changed:
        changed = False

        # (1) remove leaves
        leaves = {v for v in H.nodes() if H.degree(v) == 1}
        if leaves:
            H.remove_nodes_from(leaves)
            H = nx.convert_node_labels_to_integers(H)
            changed = True

        # (2) remove one vertex from each twin pair
        twins = twin_vertices_to_remove(H)
        if twins:
            H.remove_nodes_from(twins)
            H = nx.convert_node_labels_to_integers(H)
            changed = True

        # (3) remove isolated vertices
        isolated = {v for v in H.nodes() if H.degree(v) == 0}
        if isolated:
            H.remove_nodes_from(isolated)
            H = nx.convert_node_labels_to_integers(H)
            changed = True

    return nx.convert_node_labels_to_integers(H)

def all_gap_free_graphs_at_most_7_vertices():
    """
    NetworkX graph_atlas_g contains one representative of each
    unlabeled simple graph on at most 7 vertices.
    """
    for G in nx.graph_atlas_g():
        if G.number_of_nodes() <= 7:
            G = nx.convert_node_labels_to_integers(G)

            if is_gap_free(G):
                yield G

def add_if_new(graphs, G):
    for H in graphs:
        if nx.is_isomorphic(G, H):
            return False

    graphs.append(G.copy())
    return True

def main():
    final_graphs = []

    total_gap_free = 0

    for G in all_gap_free_graphs_at_most_7_vertices():
        total_gap_free += 1

        H = reduce_graph(G)

        if H.number_of_nodes() == 0:
            continue

        if matching_number(H) != 3:
            continue

        # (4) remove graphs whose edge ideals have linear resolutions
        if edge_ideal_has_linear_resolution(H):
            continue

        add_if_new(final_graphs, H)

    print(f"Number of gapfree graphs checked: {total_gap_free}")
    print(f"Final number of non-isomorphic graphs: {len(final_graphs)}")

    output_pdf = r"graphs.pdf"
    print("Saving to:", os.path.abspath(output_pdf))

    if len(final_graphs) == 0:
        print("No graphs found — nothing to save.")
        return

    with PdfPages(output_pdf) as pdf:
        for i, G in enumerate(final_graphs, start=1):
            plt.figure(figsize=(5, 5))

            pos = nx.spring_layout(G, seed=100 + i)

            nx.draw_networkx_nodes(G, pos, node_size=500)
            nx.draw_networkx_edges(G, pos, width=1.5)
            nx.draw_networkx_labels(G, pos, font_size=10)

            plt.title(
                f"Graph {i}: |V|={G.number_of_nodes()}, "
                f"|E|={G.number_of_edges()}, matching number=3"
            )

            plt.axis("off")
            plt.tight_layout()
            pdf.savefig()
            plt.close()

if __name__ == "__main__":
    main()    
\end{verbatim}
\endgroup

\subsection{The catalogue of the twenty six graphs}\label{app:graphs}
This catalogue uses the Python numbering.
For ease of comparison, the vertices $a,b,c,d,e$ are placed around
the outside of each drawing, with the additional vertices $x$ and,
when present, $y$ inside. Vertex labels specify the displayed
representative of each isomorphism class.
The graphs $G_1$ and $G_2$ are the known starting cases, and
$G_{12}$ is the graph treated in Section~\ref{sec:main-proof}.
The numbers of vertices and edges are given below each drawing.

\begin{figure}[!ht]
\centering
\begin{minipage}{0.48\textwidth}
\centering
\begin{tikzpicture}[scale=0.95, line width=0.55pt]
\path[use as bounding box] (-1.85,-1.62) rectangle (1.85,1.95);
\coordinate (a) at (0,1.55);
\coordinate (b) at (-1.48,0.48);
\coordinate (c) at (-0.91,-1.26);
\coordinate (d) at (0.91,-1.26);
\coordinate (e) at (1.48,0.48);
\coordinate (x) at (0,-0.12);
\draw (a)--(b);
\draw (a)--(e);
\draw (a)--(x);
\draw (b)--(c);
\draw (c)--(d);
\draw (c)--(x);
\draw (d)--(e);
\draw (d)--(x);
\fill (a) circle (1.5pt) node[above, inner sep=2.5pt] {$a$};
\fill (b) circle (1.5pt) node[left, inner sep=2.5pt] {$b$};
\fill (c) circle (1.5pt) node[below left, inner sep=2.5pt] {$c$};
\fill (d) circle (1.5pt) node[below right, inner sep=2.5pt] {$d$};
\fill (e) circle (1.5pt) node[right, inner sep=2.5pt] {$e$};
\fill (x) circle (1.5pt);
\node[inner sep=0pt] at (-0.2563,0.0997) {$x$};
\end{tikzpicture}
\par\smallskip
$G_{1}$ \quad $|V|=6$, $|E|=8$
\end{minipage}
\hfill
\begin{minipage}{0.48\textwidth}
\centering
\begin{tikzpicture}[scale=0.95, line width=0.55pt]
\path[use as bounding box] (-1.85,-1.62) rectangle (1.85,1.95);
\coordinate (a) at (0,1.55);
\coordinate (b) at (-1.48,0.48);
\coordinate (c) at (-0.91,-1.26);
\coordinate (d) at (0.91,-1.26);
\coordinate (e) at (1.48,0.48);
\coordinate (x) at (0,-0.12);
\draw (a)--(b);
\draw (a)--(e);
\draw (b)--(c);
\draw (b)--(x);
\draw (c)--(d);
\draw (c)--(x);
\draw (d)--(e);
\draw (d)--(x);
\draw (e)--(x);
\fill (a) circle (1.5pt) node[above, inner sep=2.5pt] {$a$};
\fill (b) circle (1.5pt) node[left, inner sep=2.5pt] {$b$};
\fill (c) circle (1.5pt) node[below left, inner sep=2.5pt] {$c$};
\fill (d) circle (1.5pt) node[below right, inner sep=2.5pt] {$d$};
\fill (e) circle (1.5pt) node[right, inner sep=2.5pt] {$e$};
\fill (x) circle (1.5pt);
\node[inner sep=0pt] at (0.0000,0.3193) {$x$};
\end{tikzpicture}
\par\smallskip
$G_{2}$ \quad $|V|=6$, $|E|=9$
\end{minipage}
\par\vspace{3mm}
\begin{minipage}{0.48\textwidth}
\centering
\begin{tikzpicture}[scale=0.95, line width=0.55pt]
\path[use as bounding box] (-1.85,-1.62) rectangle (1.85,1.95);
\coordinate (a) at (0,1.55);
\coordinate (b) at (-1.48,0.48);
\coordinate (c) at (-0.91,-1.26);
\coordinate (d) at (0.91,-1.26);
\coordinate (e) at (1.48,0.48);
\coordinate (x) at (0,-0.12);
\draw (a)--(b);
\draw (a)--(e);
\draw (a)--(x);
\draw (b)--(c);
\draw (b)--(e);
\draw (c)--(d);
\draw (c)--(x);
\draw (d)--(e);
\draw (d)--(x);
\fill (a) circle (1.5pt) node[above, inner sep=2.5pt] {$a$};
\fill (b) circle (1.5pt) node[left, inner sep=2.5pt] {$b$};
\fill (c) circle (1.5pt) node[below left, inner sep=2.5pt] {$c$};
\fill (d) circle (1.5pt) node[below right, inner sep=2.5pt] {$d$};
\fill (e) circle (1.5pt) node[right, inner sep=2.5pt] {$e$};
\fill (x) circle (1.5pt);
\node[inner sep=0pt] at (-0.2563,0.0997) {$x$};
\end{tikzpicture}
\par\smallskip
$G_{3}$ \quad $|V|=6$, $|E|=9$
\end{minipage}
\hfill
\begin{minipage}{0.48\textwidth}
\centering
\begin{tikzpicture}[scale=0.95, line width=0.55pt]
\path[use as bounding box] (-1.85,-1.62) rectangle (1.85,1.95);
\coordinate (a) at (0,1.55);
\coordinate (b) at (-1.48,0.48);
\coordinate (c) at (-0.91,-1.26);
\coordinate (d) at (0.91,-1.26);
\coordinate (e) at (1.48,0.48);
\coordinate (x) at (0,-0.12);
\draw (a)--(b);
\draw (a)--(e);
\draw (a)--(x);
\draw (b)--(c);
\draw (b)--(x);
\draw (c)--(d);
\draw (c)--(x);
\draw (d)--(e);
\draw (d)--(x);
\draw (e)--(x);
\fill (a) circle (1.5pt) node[above, inner sep=2.5pt] {$a$};
\fill (b) circle (1.5pt) node[left, inner sep=2.5pt] {$b$};
\fill (c) circle (1.5pt) node[below left, inner sep=2.5pt] {$c$};
\fill (d) circle (1.5pt) node[below right, inner sep=2.5pt] {$d$};
\fill (e) circle (1.5pt) node[right, inner sep=2.5pt] {$e$};
\fill (x) circle (1.5pt);
\node[inner sep=0pt] at (-0.2563,0.3193) {$x$};
\end{tikzpicture}
\par\smallskip
$G_{4}$ \quad $|V|=6$, $|E|=10$
\end{minipage}
\par\vspace{3mm}
\begin{minipage}{0.48\textwidth}
\centering
\begin{tikzpicture}[scale=0.95, line width=0.55pt]
\path[use as bounding box] (-1.85,-1.62) rectangle (1.85,1.95);
\coordinate (a) at (0,1.55);
\coordinate (b) at (-1.48,0.48);
\coordinate (c) at (-0.91,-1.26);
\coordinate (d) at (0.91,-1.26);
\coordinate (e) at (1.48,0.48);
\coordinate (x) at (-0.38,0.13);
\coordinate (y) at (0.39,-0.42);
\draw (a)--(b);
\draw (a)--(e);
\draw (a)--(x);
\draw (b)--(c);
\draw (b)--(y);
\draw (c)--(d);
\draw (c)--(x);
\draw (d)--(e);
\draw (d)--(x);
\draw (d)--(y);
\fill (a) circle (1.5pt) node[above, inner sep=2.5pt] {$a$};
\fill (b) circle (1.5pt) node[left, inner sep=2.5pt] {$b$};
\fill (c) circle (1.5pt) node[below left, inner sep=2.5pt] {$c$};
\fill (d) circle (1.5pt) node[below right, inner sep=2.5pt] {$d$};
\fill (e) circle (1.5pt) node[right, inner sep=2.5pt] {$e$};
\fill (x) circle (1.5pt);
\node[inner sep=0pt] at (-0.0139,0.2398) {$x$};
\fill (y) circle (1.5pt);
\node[inner sep=0pt] at (0.4998,-0.0905) {$y$};
\end{tikzpicture}
\par\smallskip
$G_{5}$ \quad $|V|=7$, $|E|=10$
\end{minipage}
\hfill
\begin{minipage}{0.48\textwidth}
\centering
\begin{tikzpicture}[scale=0.95, line width=0.55pt]
\path[use as bounding box] (-1.85,-1.62) rectangle (1.85,1.95);
\coordinate (a) at (0,1.55);
\coordinate (b) at (-1.48,0.48);
\coordinate (c) at (-0.91,-1.26);
\coordinate (d) at (0.91,-1.26);
\coordinate (e) at (1.48,0.48);
\coordinate (x) at (-0.38,0.13);
\coordinate (y) at (0.39,-0.42);
\draw (a)--(b);
\draw (a)--(e);
\draw (a)--(x);
\draw (a)--(y);
\draw (b)--(c);
\draw (c)--(d);
\draw (c)--(x);
\draw (d)--(e);
\draw (d)--(x);
\draw (d)--(y);
\draw (e)--(x);
\fill (a) circle (1.5pt) node[above, inner sep=2.5pt] {$a$};
\fill (b) circle (1.5pt) node[left, inner sep=2.5pt] {$b$};
\fill (c) circle (1.5pt) node[below left, inner sep=2.5pt] {$c$};
\fill (d) circle (1.5pt) node[below right, inner sep=2.5pt] {$d$};
\fill (e) circle (1.5pt) node[right, inner sep=2.5pt] {$e$};
\fill (x) circle (1.5pt);
\node[inner sep=0pt] at (-0.6729,0.2398) {$x$};
\fill (y) circle (1.5pt);
\node[inner sep=0pt] at (0.6829,-0.2369) {$y$};
\end{tikzpicture}
\par\smallskip
$G_{6}$ \quad $|V|=7$, $|E|=11$
\end{minipage}
\par\vspace{3mm}
\begin{minipage}{0.48\textwidth}
\centering
\begin{tikzpicture}[scale=0.95, line width=0.55pt]
\path[use as bounding box] (-1.85,-1.62) rectangle (1.85,1.95);
\coordinate (a) at (0,1.55);
\coordinate (b) at (-1.48,0.48);
\coordinate (c) at (-0.91,-1.26);
\coordinate (d) at (0.91,-1.26);
\coordinate (e) at (1.48,0.48);
\coordinate (x) at (-0.38,0.13);
\coordinate (y) at (0.39,-0.42);
\draw (a)--(b);
\draw (a)--(e);
\draw (a)--(x);
\draw (b)--(c);
\draw (b)--(e);
\draw (c)--(d);
\draw (c)--(x);
\draw (c)--(y);
\draw (d)--(e);
\draw (d)--(x);
\draw (e)--(y);
\fill (a) circle (1.5pt) node[above, inner sep=2.5pt] {$a$};
\fill (b) circle (1.5pt) node[left, inner sep=2.5pt] {$b$};
\fill (c) circle (1.5pt) node[below left, inner sep=2.5pt] {$c$};
\fill (d) circle (1.5pt) node[below right, inner sep=2.5pt] {$d$};
\fill (e) circle (1.5pt) node[right, inner sep=2.5pt] {$e$};
\fill (x) circle (1.5pt);
\node[inner sep=0pt] at (-0.6729,0.2398) {$x$};
\fill (y) circle (1.5pt);
\node[inner sep=0pt] at (0.3168,-0.0173) {$y$};
\end{tikzpicture}
\par\smallskip
$G_{7}$ \quad $|V|=7$, $|E|=11$
\end{minipage}
\hfill
\begin{minipage}{0.48\textwidth}
\centering
\begin{tikzpicture}[scale=0.95, line width=0.55pt]
\path[use as bounding box] (-1.85,-1.62) rectangle (1.85,1.95);
\coordinate (a) at (0,1.55);
\coordinate (b) at (-1.48,0.48);
\coordinate (c) at (-0.91,-1.26);
\coordinate (d) at (0.91,-1.26);
\coordinate (e) at (1.48,0.48);
\coordinate (x) at (-0.38,0.13);
\coordinate (y) at (0.39,-0.42);
\draw (a)--(b);
\draw (a)--(e);
\draw (a)--(x);
\draw (b)--(c);
\draw (b)--(x);
\draw (b)--(y);
\draw (c)--(d);
\draw (c)--(x);
\draw (d)--(e);
\draw (d)--(x);
\draw (e)--(x);
\draw (e)--(y);
\fill (a) circle (1.5pt) node[above, inner sep=2.5pt] {$a$};
\fill (b) circle (1.5pt) node[left, inner sep=2.5pt] {$b$};
\fill (c) circle (1.5pt) node[below left, inner sep=2.5pt] {$c$};
\fill (d) circle (1.5pt) node[below right, inner sep=2.5pt] {$d$};
\fill (e) circle (1.5pt) node[right, inner sep=2.5pt] {$e$};
\fill (x) circle (1.5pt);
\node[inner sep=0pt] at (-0.5997,0.4961) {$x$};
\fill (y) circle (1.5pt);
\node[inner sep=0pt] at (0.7927,-0.5664) {$y$};
\end{tikzpicture}
\par\smallskip
$G_{8}$ \quad $|V|=7$, $|E|=12$
\end{minipage}
\caption{Graphs $G_{1}$--$G_{8}$ in the Python enumeration.}
\end{figure}
\clearpage

\begin{figure}[p]
\centering
\begin{minipage}{0.48\textwidth}
\centering
\begin{tikzpicture}[scale=0.95, line width=0.55pt]
\path[use as bounding box] (-1.85,-1.62) rectangle (1.85,1.95);
\coordinate (a) at (0,1.55);
\coordinate (b) at (-1.48,0.48);
\coordinate (c) at (-0.91,-1.26);
\coordinate (d) at (0.91,-1.26);
\coordinate (e) at (1.48,0.48);
\coordinate (x) at (-0.12,0.13);
\coordinate (y) at (0.39,-0.42);
\draw (a)--(b);
\draw (a)--(c);
\draw (a)--(e);
\draw (a)--(y);
\draw (b)--(c);
\draw (b)--(x);
\draw (c)--(d);
\draw (c)--(x);
\draw (d)--(e);
\draw (d)--(x);
\draw (d)--(y);
\draw (e)--(x);
\fill (a) circle (1.5pt) node[above, inner sep=2.5pt] {$a$};
\fill (b) circle (1.5pt) node[left, inner sep=2.5pt] {$b$};
\fill (c) circle (1.5pt) node[below left, inner sep=2.5pt] {$c$};
\fill (d) circle (1.5pt) node[below right, inner sep=2.5pt] {$d$};
\fill (e) circle (1.5pt) node[right, inner sep=2.5pt] {$e$};
\fill (x) circle (1.5pt);
\node[inner sep=0pt] at (-0.06,0.51) {$x$};
\fill (y) circle (1.5pt);
\node[inner sep=0pt] at (0.6829,-0.2369) {$y$};
\end{tikzpicture}
\par\smallskip
$G_{9}$ \quad $|V|=7$, $|E|=12$
\end{minipage}
\hfill
\begin{minipage}{0.48\textwidth}
\centering
\begin{tikzpicture}[scale=0.95, line width=0.55pt]
\path[use as bounding box] (-1.85,-1.62) rectangle (1.85,1.95);
\coordinate (a) at (0,1.55);
\coordinate (b) at (-1.48,0.48);
\coordinate (c) at (-0.91,-1.26);
\coordinate (d) at (0.91,-1.26);
\coordinate (e) at (1.48,0.48);
\coordinate (x) at (-0.38,0.13);
\coordinate (y) at (0.39,-0.42);
\draw (a)--(b);
\draw (a)--(e);
\draw (a)--(x);
\draw (a)--(y);
\draw (b)--(c);
\draw (b)--(e);
\draw (b)--(x);
\draw (c)--(d);
\draw (c)--(x);
\draw (c)--(y);
\draw (d)--(e);
\draw (d)--(x);
\fill (a) circle (1.5pt) node[above, inner sep=2.5pt] {$a$};
\fill (b) circle (1.5pt) node[left, inner sep=2.5pt] {$b$};
\fill (c) circle (1.5pt) node[below left, inner sep=2.5pt] {$c$};
\fill (d) circle (1.5pt) node[below right, inner sep=2.5pt] {$d$};
\fill (e) circle (1.5pt) node[right, inner sep=2.5pt] {$e$};
\fill (x) circle (1.5pt);
\node[inner sep=0pt] at (-0.8193,-0.0531) {$x$};
\fill (y) circle (1.5pt);
\node[inner sep=0pt] at (0.6829,-0.4200) {$y$};
\end{tikzpicture}
\par\smallskip
$G_{10}$ \quad $|V|=7$, $|E|=12$
\end{minipage}
\par\vspace{3mm}
\begin{minipage}{0.48\textwidth}
\centering
\begin{tikzpicture}[scale=0.95, line width=0.55pt]
\path[use as bounding box] (-1.85,-1.62) rectangle (1.85,1.95);
\coordinate (a) at (0,1.55);
\coordinate (b) at (-1.48,0.48);
\coordinate (c) at (-0.91,-1.26);
\coordinate (d) at (0.91,-1.26);
\coordinate (e) at (1.48,0.48);
\coordinate (x) at (-0.38,0.13);
\coordinate (y) at (0.39,-0.42);
\draw (a)--(b);
\draw (a)--(e);
\draw (a)--(x);
\draw (b)--(c);
\draw (b)--(y);
\draw (c)--(d);
\draw (c)--(x);
\draw (c)--(y);
\draw (d)--(e);
\draw (d)--(x);
\draw (e)--(y);
\draw (x)--(y);
\fill (a) circle (1.5pt) node[above, inner sep=2.5pt] {$a$};
\fill (b) circle (1.5pt) node[left, inner sep=2.5pt] {$b$};
\fill (c) circle (1.5pt) node[below left, inner sep=2.5pt] {$c$};
\fill (d) circle (1.5pt) node[below right, inner sep=2.5pt] {$d$};
\fill (e) circle (1.5pt) node[right, inner sep=2.5pt] {$e$};
\fill (x) circle (1.5pt);
\node[inner sep=0pt] at (-0.6363,0.4229) {$x$};
\fill (y) circle (1.5pt);
\node[inner sep=0pt] at (0.7927,-0.5664) {$y$};
\end{tikzpicture}
\par\smallskip
$G_{11}$ \quad $|V|=7$, $|E|=12$
\end{minipage}
\hfill
\begin{minipage}{0.48\textwidth}
\centering
\begin{tikzpicture}[scale=0.95, line width=0.55pt]
\path[use as bounding box] (-1.85,-1.62) rectangle (1.85,1.95);
\coordinate (a) at (0,1.55);
\coordinate (b) at (-1.48,0.48);
\coordinate (c) at (-0.91,-1.26);
\coordinate (d) at (0.91,-1.26);
\coordinate (e) at (1.48,0.48);
\coordinate (x) at (0,-0.53);
\coordinate (y) at (0,0.25);
\draw (a)--(b);
\draw (a)--(e);
\draw (a)--(y);
\draw (b)--(c);
\draw (b)--(x);
\draw (c)--(d);
\draw (c)--(x);
\draw (c)--(y);
\draw (d)--(e);
\draw (d)--(x);
\draw (d)--(y);
\draw (e)--(x);
\fill (a) circle (1.5pt) node[above, inner sep=2.5pt] {$a$};
\fill (b) circle (1.5pt) node[left, inner sep=2.5pt] {$b$};
\fill (c) circle (1.5pt) node[below left, inner sep=2.5pt] {$c$};
\fill (d) circle (1.5pt) node[below right, inner sep=2.5pt] {$d$};
\fill (e) circle (1.5pt) node[right, inner sep=2.5pt] {$e$};
\fill (x) circle (1.5pt);
\node[inner sep=0pt] at (0.0000,-0.9327) {$x$};
\fill (y) circle (1.5pt);
\node[inner sep=0pt] at (-0.2929,0.3964) {$y$};
\end{tikzpicture}
\par\smallskip
$G_{12}$ \quad $|V|=7$, $|E|=12$
\end{minipage}
\par\vspace{3mm}
\begin{minipage}{0.48\textwidth}
\centering
\begin{tikzpicture}[scale=0.95, line width=0.55pt]
\path[use as bounding box] (-1.85,-1.62) rectangle (1.85,1.95);
\coordinate (a) at (0,1.55);
\coordinate (b) at (-1.48,0.48);
\coordinate (c) at (-0.91,-1.26);
\coordinate (d) at (0.91,-1.26);
\coordinate (e) at (1.48,0.48);
\coordinate (x) at (-0.38,0.13);
\coordinate (y) at (0.39,-0.42);
\draw (a)--(b);
\draw (a)--(e);
\draw (a)--(x);
\draw (a)--(y);
\draw (b)--(c);
\draw (c)--(d);
\draw (c)--(x);
\draw (c)--(y);
\draw (d)--(e);
\draw (d)--(x);
\draw (d)--(y);
\draw (e)--(x);
\draw (x)--(y);
\fill (a) circle (1.5pt) node[above, inner sep=2.5pt] {$a$};
\fill (b) circle (1.5pt) node[left, inner sep=2.5pt] {$b$};
\fill (c) circle (1.5pt) node[below left, inner sep=2.5pt] {$c$};
\fill (d) circle (1.5pt) node[below right, inner sep=2.5pt] {$d$};
\fill (e) circle (1.5pt) node[right, inner sep=2.5pt] {$e$};
\fill (x) circle (1.5pt);
\node[inner sep=0pt] at (-0.6729,0.2398) {$x$};
\fill (y) circle (1.5pt);
\node[inner sep=0pt] at (0.6829,-0.2369) {$y$};
\end{tikzpicture}
\par\smallskip
$G_{13}$ \quad $|V|=7$, $|E|=13$
\end{minipage}
\hfill
\begin{minipage}{0.48\textwidth}
\centering
\begin{tikzpicture}[scale=0.95, line width=0.55pt]
\path[use as bounding box] (-1.85,-1.62) rectangle (1.85,1.95);
\coordinate (a) at (0,1.55);
\coordinate (b) at (-1.48,0.48);
\coordinate (c) at (-0.91,-1.26);
\coordinate (d) at (0.91,-1.26);
\coordinate (e) at (1.48,0.48);
\coordinate (x) at (-0.38,0.13);
\coordinate (y) at (0.39,-0.42);
\draw (a)--(b);
\draw (a)--(e);
\draw (a)--(y);
\draw (b)--(c);
\draw (b)--(e);
\draw (b)--(x);
\draw (c)--(d);
\draw (c)--(x);
\draw (d)--(e);
\draw (d)--(x);
\draw (d)--(y);
\draw (e)--(x);
\draw (x)--(y);
\fill (a) circle (1.5pt) node[above, inner sep=2.5pt] {$a$};
\fill (b) circle (1.5pt) node[left, inner sep=2.5pt] {$b$};
\fill (c) circle (1.5pt) node[below left, inner sep=2.5pt] {$c$};
\fill (d) circle (1.5pt) node[below right, inner sep=2.5pt] {$d$};
\fill (e) circle (1.5pt) node[right, inner sep=2.5pt] {$e$};
\fill (x) circle (1.5pt);
\node[inner sep=0pt] at (-0.8193,-0.0531) {$x$};
\fill (y) circle (1.5pt);
\node[inner sep=0pt] at (0.6829,-0.2369) {$y$};
\end{tikzpicture}
\par\smallskip
$G_{14}$ \quad $|V|=7$, $|E|=13$
\end{minipage}
\par\vspace{3mm}
\begin{minipage}{0.48\textwidth}
\centering
\begin{tikzpicture}[scale=0.95, line width=0.55pt]
\path[use as bounding box] (-1.85,-1.62) rectangle (1.85,1.95);
\coordinate (a) at (0,1.55);
\coordinate (b) at (-1.48,0.48);
\coordinate (c) at (-0.91,-1.26);
\coordinate (d) at (0.91,-1.26);
\coordinate (e) at (1.48,0.48);
\coordinate (x) at (-0.38,0.13);
\coordinate (y) at (0.39,-0.42);
\draw (a)--(b);
\draw (a)--(d);
\draw (a)--(e);
\draw (b)--(c);
\draw (b)--(x);
\draw (b)--(y);
\draw (c)--(d);
\draw (c)--(x);
\draw (d)--(e);
\draw (d)--(x);
\draw (e)--(x);
\draw (e)--(y);
\draw (x)--(y);
\fill (a) circle (1.5pt) node[above, inner sep=2.5pt] {$a$};
\fill (b) circle (1.5pt) node[left, inner sep=2.5pt] {$b$};
\fill (c) circle (1.5pt) node[below left, inner sep=2.5pt] {$c$};
\fill (d) circle (1.5pt) node[below right, inner sep=2.5pt] {$d$};
\fill (e) circle (1.5pt) node[right, inner sep=2.5pt] {$e$};
\fill (x) circle (1.5pt);
\node[inner sep=0pt] at (-0.3800,0.4229) {$x$};
\fill (y) circle (1.5pt);
\node[inner sep=0pt] at (-0.0127,-0.7861) {$y$};
\end{tikzpicture}
\par\smallskip
$G_{15}$ \quad $|V|=7$, $|E|=13$
\end{minipage}
\hfill
\begin{minipage}{0.48\textwidth}
\centering
\begin{tikzpicture}[scale=0.95, line width=0.55pt]
\path[use as bounding box] (-1.85,-1.62) rectangle (1.85,1.95);
\coordinate (a) at (0,1.55);
\coordinate (b) at (-1.48,0.48);
\coordinate (c) at (-0.91,-1.26);
\coordinate (d) at (0.91,-1.26);
\coordinate (e) at (1.48,0.48);
\coordinate (x) at (-0.38,0.13);
\coordinate (y) at (0.39,-0.42);
\draw (a)--(b);
\draw (a)--(e);
\draw (a)--(y);
\draw (b)--(c);
\draw (b)--(x);
\draw (c)--(d);
\draw (c)--(x);
\draw (c)--(y);
\draw (d)--(e);
\draw (d)--(x);
\draw (d)--(y);
\draw (e)--(x);
\draw (x)--(y);
\fill (a) circle (1.5pt) node[above, inner sep=2.5pt] {$a$};
\fill (b) circle (1.5pt) node[left, inner sep=2.5pt] {$b$};
\fill (c) circle (1.5pt) node[below left, inner sep=2.5pt] {$c$};
\fill (d) circle (1.5pt) node[below right, inner sep=2.5pt] {$d$};
\fill (e) circle (1.5pt) node[right, inner sep=2.5pt] {$e$};
\fill (x) circle (1.5pt);
\node[inner sep=0pt] at (-0.3800,0.4229) {$x$};
\fill (y) circle (1.5pt);
\node[inner sep=0pt] at (0.6829,-0.2369) {$y$};
\end{tikzpicture}
\par\smallskip
$G_{16}$ \quad $|V|=7$, $|E|=13$
\end{minipage}
\caption{Graphs $G_{9}$--$G_{16}$ in the Python enumeration.}
\end{figure}
\clearpage

\begin{figure}[p]
\centering
\begin{minipage}{0.48\textwidth}
\centering
\begin{tikzpicture}[scale=0.95, line width=0.55pt]
\path[use as bounding box] (-1.85,-1.62) rectangle (1.85,1.95);
\coordinate (a) at (0,1.55);
\coordinate (b) at (-1.48,0.48);
\coordinate (c) at (-0.91,-1.26);
\coordinate (d) at (0.91,-1.26);
\coordinate (e) at (1.48,0.48);
\coordinate (x) at (-0.38,0.13);
\coordinate (y) at (0.39,-0.42);
\draw (a)--(b);
\draw (a)--(e);
\draw (a)--(x);
\draw (a)--(y);
\draw (b)--(c);
\draw (b)--(d);
\draw (c)--(d);
\draw (c)--(x);
\draw (c)--(y);
\draw (d)--(e);
\draw (d)--(y);
\draw (e)--(x);
\draw (x)--(y);
\fill (a) circle (1.5pt) node[above, inner sep=2.5pt] {$a$};
\fill (b) circle (1.5pt) node[left, inner sep=2.5pt] {$b$};
\fill (c) circle (1.5pt) node[below left, inner sep=2.5pt] {$c$};
\fill (d) circle (1.5pt) node[below right, inner sep=2.5pt] {$d$};
\fill (e) circle (1.5pt) node[right, inner sep=2.5pt] {$e$};
\fill (x) circle (1.5pt);
\node[inner sep=0pt] at (-0.6363,0.3497) {$x$};
\fill (y) circle (1.5pt);
\node[inner sep=0pt] at (0.6829,-0.2369) {$y$};
\end{tikzpicture}
\par\smallskip
$G_{17}$ \quad $|V|=7$, $|E|=13$
\end{minipage}
\hfill
\begin{minipage}{0.48\textwidth}
\centering
\begin{tikzpicture}[scale=0.95, line width=0.55pt]
\path[use as bounding box] (-1.85,-1.62) rectangle (1.85,1.95);
\coordinate (a) at (0,1.55);
\coordinate (b) at (-1.48,0.48);
\coordinate (c) at (-0.91,-1.26);
\coordinate (d) at (0.91,-1.26);
\coordinate (e) at (1.48,0.48);
\coordinate (x) at (-0.38,0.13);
\coordinate (y) at (0.39,-0.42);
\draw (a)--(b);
\draw (a)--(e);
\draw (a)--(y);
\draw (b)--(c);
\draw (b)--(x);
\draw (b)--(y);
\draw (c)--(d);
\draw (c)--(x);
\draw (c)--(y);
\draw (d)--(e);
\draw (d)--(x);
\draw (d)--(y);
\draw (e)--(x);
\fill (a) circle (1.5pt) node[above, inner sep=2.5pt] {$a$};
\fill (b) circle (1.5pt) node[left, inner sep=2.5pt] {$b$};
\fill (c) circle (1.5pt) node[below left, inner sep=2.5pt] {$c$};
\fill (d) circle (1.5pt) node[below right, inner sep=2.5pt] {$d$};
\fill (e) circle (1.5pt) node[right, inner sep=2.5pt] {$e$};
\fill (x) circle (1.5pt);
\node[inner sep=0pt] at (-0.3800,0.4229) {$x$};
\fill (y) circle (1.5pt);
\node[inner sep=0pt] at (0.6829,-0.2369) {$y$};
\end{tikzpicture}
\par\smallskip
$G_{18}$ \quad $|V|=7$, $|E|=13$
\end{minipage}
\par\vspace{3mm}
\begin{minipage}{0.48\textwidth}
\centering
\begin{tikzpicture}[scale=0.95, line width=0.55pt]
\path[use as bounding box] (-1.85,-1.62) rectangle (1.85,1.95);
\coordinate (a) at (0,1.55);
\coordinate (b) at (-1.48,0.48);
\coordinate (c) at (-0.91,-1.26);
\coordinate (d) at (0.91,-1.26);
\coordinate (e) at (1.48,0.48);
\coordinate (x) at (-0.38,0.13);
\coordinate (y) at (0.39,-0.42);
\draw (a)--(b);
\draw (a)--(e);
\draw (a)--(x);
\draw (a)--(y);
\draw (b)--(c);
\draw (b)--(x);
\draw (c)--(d);
\draw (c)--(x);
\draw (c)--(y);
\draw (d)--(e);
\draw (d)--(x);
\draw (d)--(y);
\draw (e)--(x);
\draw (x)--(y);
\fill (a) circle (1.5pt) node[above, inner sep=2.5pt] {$a$};
\fill (b) circle (1.5pt) node[left, inner sep=2.5pt] {$b$};
\fill (c) circle (1.5pt) node[below left, inner sep=2.5pt] {$c$};
\fill (d) circle (1.5pt) node[below right, inner sep=2.5pt] {$d$};
\fill (e) circle (1.5pt) node[right, inner sep=2.5pt] {$e$};
\fill (x) circle (1.5pt);
\node[inner sep=0pt] at (-0.5997,0.4961) {$x$};
\fill (y) circle (1.5pt);
\node[inner sep=0pt] at (0.6829,-0.2369) {$y$};
\end{tikzpicture}
\par\smallskip
$G_{19}$ \quad $|V|=7$, $|E|=14$
\end{minipage}
\hfill
\begin{minipage}{0.48\textwidth}
\centering
\begin{tikzpicture}[scale=0.95, line width=0.55pt]
\path[use as bounding box] (-1.85,-1.62) rectangle (1.85,1.95);
\coordinate (a) at (0,1.55);
\coordinate (b) at (-1.48,0.48);
\coordinate (c) at (-0.91,-1.26);
\coordinate (d) at (0.91,-1.26);
\coordinate (e) at (1.48,0.48);
\coordinate (x) at (-0.38,0.13);
\coordinate (y) at (0.39,-0.42);
\draw (a)--(b);
\draw (a)--(e);
\draw (a)--(x);
\draw (a)--(y);
\draw (b)--(c);
\draw (b)--(x);
\draw (c)--(d);
\draw (c)--(x);
\draw (c)--(y);
\draw (d)--(e);
\draw (d)--(x);
\draw (d)--(y);
\draw (e)--(y);
\draw (x)--(y);
\fill (a) circle (1.5pt) node[above, inner sep=2.5pt] {$a$};
\fill (b) circle (1.5pt) node[left, inner sep=2.5pt] {$b$};
\fill (c) circle (1.5pt) node[below left, inner sep=2.5pt] {$c$};
\fill (d) circle (1.5pt) node[below right, inner sep=2.5pt] {$d$};
\fill (e) circle (1.5pt) node[right, inner sep=2.5pt] {$e$};
\fill (x) circle (1.5pt);
\node[inner sep=0pt] at (-0.5997,0.4961) {$x$};
\fill (y) circle (1.5pt);
\node[inner sep=0pt] at (0.5731,0.2024) {$y$};
\end{tikzpicture}
\par\smallskip
$G_{20}$ \quad $|V|=7$, $|E|=14$
\end{minipage}
\par\vspace{3mm}
\begin{minipage}{0.48\textwidth}
\centering
\begin{tikzpicture}[scale=0.95, line width=0.55pt]
\path[use as bounding box] (-1.85,-1.62) rectangle (1.85,1.95);
\coordinate (a) at (0,1.55);
\coordinate (b) at (-1.48,0.48);
\coordinate (c) at (-0.91,-1.26);
\coordinate (d) at (0.91,-1.26);
\coordinate (e) at (1.48,0.48);
\coordinate (x) at (-0.38,0.13);
\coordinate (y) at (0.39,-0.42);
\draw (a)--(b);
\draw (a)--(e);
\draw (a)--(y);
\draw (b)--(c);
\draw (b)--(x);
\draw (c)--(d);
\draw (c)--(x);
\draw (c)--(y);
\draw (d)--(e);
\draw (d)--(x);
\draw (d)--(y);
\draw (e)--(x);
\draw (e)--(y);
\draw (x)--(y);
\fill (a) circle (1.5pt) node[above, inner sep=2.5pt] {$a$};
\fill (b) circle (1.5pt) node[left, inner sep=2.5pt] {$b$};
\fill (c) circle (1.5pt) node[below left, inner sep=2.5pt] {$c$};
\fill (d) circle (1.5pt) node[below right, inner sep=2.5pt] {$d$};
\fill (e) circle (1.5pt) node[right, inner sep=2.5pt] {$e$};
\fill (x) circle (1.5pt);
\node[inner sep=0pt] at (-0.3800,0.4229) {$x$};
\fill (y) circle (1.5pt);
\node[inner sep=0pt] at (0.8293,-0.4932) {$y$};
\end{tikzpicture}
\par\smallskip
$G_{21}$ \quad $|V|=7$, $|E|=14$
\end{minipage}
\hfill
\begin{minipage}{0.48\textwidth}
\centering
\begin{tikzpicture}[scale=0.95, line width=0.55pt]
\path[use as bounding box] (-1.85,-1.62) rectangle (1.85,1.95);
\coordinate (a) at (0,1.55);
\coordinate (b) at (-1.48,0.48);
\coordinate (c) at (-0.91,-1.26);
\coordinate (d) at (0.91,-1.26);
\coordinate (e) at (1.48,0.48);
\coordinate (x) at (-0.38,0.13);
\coordinate (y) at (0.39,-0.42);
\draw (a)--(b);
\draw (a)--(e);
\draw (a)--(x);
\draw (b)--(c);
\draw (b)--(x);
\draw (b)--(y);
\draw (c)--(d);
\draw (c)--(x);
\draw (c)--(y);
\draw (d)--(e);
\draw (d)--(x);
\draw (d)--(y);
\draw (e)--(x);
\draw (e)--(y);
\fill (a) circle (1.5pt) node[above, inner sep=2.5pt] {$a$};
\fill (b) circle (1.5pt) node[left, inner sep=2.5pt] {$b$};
\fill (c) circle (1.5pt) node[below left, inner sep=2.5pt] {$c$};
\fill (d) circle (1.5pt) node[below right, inner sep=2.5pt] {$d$};
\fill (e) circle (1.5pt) node[right, inner sep=2.5pt] {$e$};
\fill (x) circle (1.5pt);
\node[inner sep=0pt] at (-0.5997,0.4961) {$x$};
\fill (y) circle (1.5pt);
\node[inner sep=0pt] at (0.3534,-0.0173) {$y$};
\end{tikzpicture}
\par\smallskip
$G_{22}$ \quad $|V|=7$, $|E|=14$
\end{minipage}
\par\vspace{3mm}
\begin{minipage}{0.48\textwidth}
\centering
\begin{tikzpicture}[scale=0.95, line width=0.55pt]
\path[use as bounding box] (-1.85,-1.62) rectangle (1.85,1.95);
\coordinate (a) at (0,1.55);
\coordinate (b) at (-1.48,0.48);
\coordinate (c) at (-0.91,-1.26);
\coordinate (d) at (0.91,-1.26);
\coordinate (e) at (1.48,0.48);
\coordinate (x) at (-0.38,0.13);
\coordinate (y) at (0.39,-0.42);
\draw (a)--(b);
\draw (a)--(e);
\draw (a)--(x);
\draw (b)--(c);
\draw (b)--(e);
\draw (b)--(y);
\draw (c)--(d);
\draw (c)--(x);
\draw (c)--(y);
\draw (d)--(e);
\draw (d)--(x);
\draw (d)--(y);
\draw (e)--(y);
\draw (x)--(y);
\fill (a) circle (1.5pt) node[above, inner sep=2.5pt] {$a$};
\fill (b) circle (1.5pt) node[left, inner sep=2.5pt] {$b$};
\fill (c) circle (1.5pt) node[below left, inner sep=2.5pt] {$c$};
\fill (d) circle (1.5pt) node[below right, inner sep=2.5pt] {$d$};
\fill (e) circle (1.5pt) node[right, inner sep=2.5pt] {$e$};
\fill (x) circle (1.5pt);
\node[inner sep=0pt] at (0.0227,0.2398) {$x$};
\fill (y) circle (1.5pt);
\node[inner sep=0pt] at (0.4632,0.1292) {$y$};
\end{tikzpicture}
\par\smallskip
$G_{23}$ \quad $|V|=7$, $|E|=14$
\end{minipage}
\hfill
\begin{minipage}{0.48\textwidth}
\centering
\begin{tikzpicture}[scale=0.95, line width=0.55pt]
\path[use as bounding box] (-1.85,-1.62) rectangle (1.85,1.95);
\coordinate (a) at (0,1.55);
\coordinate (b) at (-1.48,0.48);
\coordinate (c) at (-0.91,-1.26);
\coordinate (d) at (0.91,-1.26);
\coordinate (e) at (1.48,0.48);
\coordinate (x) at (-0.38,0.13);
\coordinate (y) at (0.39,-0.42);
\draw (a)--(b);
\draw (a)--(d);
\draw (a)--(e);
\draw (a)--(y);
\draw (b)--(c);
\draw (b)--(x);
\draw (b)--(y);
\draw (c)--(d);
\draw (c)--(x);
\draw (c)--(y);
\draw (d)--(e);
\draw (d)--(x);
\draw (e)--(x);
\draw (e)--(y);
\fill (a) circle (1.5pt) node[above, inner sep=2.5pt] {$a$};
\fill (b) circle (1.5pt) node[left, inner sep=2.5pt] {$b$};
\fill (c) circle (1.5pt) node[below left, inner sep=2.5pt] {$c$};
\fill (d) circle (1.5pt) node[below right, inner sep=2.5pt] {$d$};
\fill (e) circle (1.5pt) node[right, inner sep=2.5pt] {$e$};
\fill (x) circle (1.5pt);
\node[inner sep=0pt] at (-0.3800,0.4229) {$x$};
\fill (y) circle (1.5pt);
\node[inner sep=0pt] at (0.1703,-0.9325) {$y$};
\end{tikzpicture}
\par\smallskip
$G_{24}$ \quad $|V|=7$, $|E|=14$
\end{minipage}
\caption{Graphs $G_{17}$--$G_{24}$ in the Python enumeration.}
\end{figure}
\clearpage

\begin{figure}[!ht]
\centering
\begin{minipage}{0.48\textwidth}
\centering
\begin{tikzpicture}[scale=0.95, line width=0.55pt]
\path[use as bounding box] (-1.85,-1.62) rectangle (1.85,1.95);
\coordinate (a) at (0,1.55);
\coordinate (b) at (-1.48,0.48);
\coordinate (c) at (-0.91,-1.26);
\coordinate (d) at (0.91,-1.26);
\coordinate (e) at (1.48,0.48);
\coordinate (x) at (-0.38,0.13);
\coordinate (y) at (0.39,-0.42);
\draw (a)--(b);
\draw (a)--(d);
\draw (a)--(e);
\draw (a)--(x);
\draw (b)--(c);
\draw (b)--(x);
\draw (c)--(d);
\draw (c)--(x);
\draw (c)--(y);
\draw (d)--(e);
\draw (d)--(x);
\draw (d)--(y);
\draw (e)--(x);
\draw (e)--(y);
\draw (x)--(y);
\fill (a) circle (1.5pt) node[above, inner sep=2.5pt] {$a$};
\fill (b) circle (1.5pt) node[left, inner sep=2.5pt] {$b$};
\fill (c) circle (1.5pt) node[below left, inner sep=2.5pt] {$c$};
\fill (d) circle (1.5pt) node[below right, inner sep=2.5pt] {$d$};
\fill (e) circle (1.5pt) node[right, inner sep=2.5pt] {$e$};
\fill (x) circle (1.5pt);
\node[inner sep=0pt] at (-0.5997,0.4961) {$x$};
\fill (y) circle (1.5pt);
\node[inner sep=0pt] at (0.1703,-0.9325) {$y$};
\end{tikzpicture}
\par\smallskip
$G_{25}$ \quad $|V|=7$, $|E|=15$
\end{minipage}
\hfill
\begin{minipage}{0.48\textwidth}
\centering
\begin{tikzpicture}[scale=0.95, line width=0.55pt]
\path[use as bounding box] (-1.85,-1.62) rectangle (1.85,1.95);
\coordinate (a) at (0,1.55);
\coordinate (b) at (-1.48,0.48);
\coordinate (c) at (-0.91,-1.26);
\coordinate (d) at (0.91,-1.26);
\coordinate (e) at (1.48,0.48);
\coordinate (x) at (-0.38,0.13);
\coordinate (y) at (0.39,-0.42);
\draw (a)--(b);
\draw (a)--(e);
\draw (a)--(x);
\draw (a)--(y);
\draw (b)--(c);
\draw (b)--(e);
\draw (b)--(x);
\draw (c)--(d);
\draw (c)--(x);
\draw (c)--(y);
\draw (d)--(e);
\draw (d)--(x);
\draw (d)--(y);
\draw (e)--(x);
\draw (x)--(y);
\fill (a) circle (1.5pt) node[above, inner sep=2.5pt] {$a$};
\fill (b) circle (1.5pt) node[left, inner sep=2.5pt] {$b$};
\fill (c) circle (1.5pt) node[below left, inner sep=2.5pt] {$c$};
\fill (d) circle (1.5pt) node[below right, inner sep=2.5pt] {$d$};
\fill (e) circle (1.5pt) node[right, inner sep=2.5pt] {$e$};
\fill (x) circle (1.5pt);
\node[inner sep=0pt] at (-0.8193,-0.0531) {$x$};
\fill (y) circle (1.5pt);
\node[inner sep=0pt] at (0.6829,-0.2369) {$y$};
\end{tikzpicture}
\par\smallskip
$G_{26}$ \quad $|V|=7$, $|E|=15$
\end{minipage}
\caption{Graphs $G_{25}$--$G_{26}$ in the Python enumeration.}
\end{figure}

\end{document}